\def\mydate{8 April  2018}
\newtheorem{proposition}{Proposition}[section]
\newtheorem{lemma}[proposition]{Lemma}
\newtheorem{corollary}[proposition]{Corollary}
\newtheorem{theorem}[proposition]{Theorem}
\newtheorem{conjecture}[proposition]{Conjecture}
\theoremstyle{definition}
\newtheorem{definition}[proposition]{Definition}
\newtheorem{example}[proposition]{Example}
\newtheorem{problem}[proposition]{Problem}
\newtheorem{question}[proposition]{Question}
\newcommand{\mcal}{\mathcal}
\newcommand{\C}{\mcal{C}}
\newcommand{\D}{\mcal{D}}
\newcommand{\F}{\mcal{F}}
\newcommand{\G}{\mcal{G}}
\newcommand{\K}{\mcal{K}}
\renewcommand{\L}{\mcal{L}}
\newcommand{\R}{\mcal{R}}
\def\yy#1{{\color{blue}#1}}
\def\zz#1{{\color{blue}#1}}
\def\aa#1{{\color{blue}#1}}
\def\xx#1{{\color{blue}#1}}
\def\cc#1{{\color{Magenta}#1}}
\def\REM#1{\footnote{#1}}
\def\NOTE#1{{\bf NOTE:} #1}
\let\ppar=\par
\def\yy#1{{#1}}
\def\zz#1{{#1}}
\def\aa#1{{#1}}
\def\xx#1{{#1}}
\def\qq#1{{#1}}
\def\cc#1{{#1}}
\def\REM#1{}
\def\NOTE#1{}
\begin{document}
\font\smallrm=cmr8

%\author{Arash Asadi\thanksref{arashemail}},\author{Luke Postle\thanksref{lukeemail}}, and \author{Robin Thomas\thanksref{robinemail}\thanksref{NSF}}
%\address{School of Mathematics\\ Georgia Institute of Technology\\ Atlanta, GA, USA}

%   \thanks[arashemail]{Email: {\tt aasadi@math.gatech.edu} 
%   \thanks[lukeemail]{Email:
%   {\tt ljpostle@math.gatech.edu} 
%   \thanks[robinemail]{Email:
%   {\tt thomas@math.gatech.edu} 
%   \thanks[NSF]{Partially supported by NSF under Grant No. DMS-0701077}

%\begin{keyword}
%Coloring Graphs, Triangle-Free Planar Graphs
%\end{keyword}

%\end{frontmatter}
%%%%%%%%%%%%%%%%%%%%%%%%%%%%%%%%%%%%%%%%%
%\baselineskip=12pt
%\phantom{a}\vskip .2in
\centerline{{\bf  HYPERBOLIC FAMILIES AND COLORING GRAPHS ON SURFACES}%
%\footnote{Partially supported by NSF under Grant No.~DMS-0701077.}
}
%\centerline{{\bf  OF TRIANGLE-FREE PLANAR GRAPHS}}
\vskip .25in
%authors
\centerline{{\bf Luke Postle}%
\footnote{Partially
supported by NSERC under Discovery Grant No. 2014-06162, the Ontario Early 
Researcher Awards program and the Canada Research Chairs program.}}
%\footnote{\texttt{luke@mathcs.emory.edu}.}} 
%Partially supported by an NSF Graduate Research Fellowship.}}
\smallskip
\centerline{Department of Combinatorics and Optimization}
\centerline{University of Waterloo}
\centerline{Waterloo, ON}
\centerline{Canada N2L 3G1}
\centerline{{\tt lpostle@uwaterloo.ca}}
\bigskip
\centerline{and}
\bigskip
\centerline{{\bf Robin Thomas}%
\footnote{Partially supported by NSF under Grant No.~DMS-1202640.}}
\smallskip
\centerline{School of Mathematics}
\centerline{Georgia Institute of Technology}
\centerline{Atlanta, Georgia  30332-0160, USA}
\centerline{{\tt thomas@math.gatech.edu}}
%end of authors

%\vskip 0.4in 
\bigskip
\centerline{\bf ABSTRACT}
\bigskip

\noindent
We develop a theory of linear isoperimetric inequalities for graphs on surfaces and apply it to coloring problems,
as follows.
Let $G$ be a graph embedded in a fixed surface $\Sigma$ of genus $g$ and let $L=(L(v):v\in V(G))$ be
a collection of lists such that either each list has size at least five,
or each list  has size at least four and $G$ is triangle-free, or
each list has size at least three and $G$  has no cycle of length four or less.
 An {\em $L$-coloring} of $G$ is a mapping
$\phi$ with domain $V(G)$ such that $\phi(v)\in L(v)$ for every $v\in V(G)$ and
$\phi(v)\ne\phi(u)$ for every pair of adjacent vertices $u,v\in V(G)$.
We prove
\begin{itemize}
\item if every non-null-homotopic cycle in $G$ has length $\Omega(\log g)$, then $G$ has an $L$-coloring,
\item if $G$ does not have an $L$-coloring, but every proper subgraph does (``$L$-critical graph"), 
%then it has a subgraph on $O(g)$ vertices that does not  have an $L$-coloring,
then $|V(G)|=O(g)$,
\item  if every non-null-homotopic cycle in $G$ has length $\Omega(g)$, and a set $X\subseteq V(G)$
of vertices that are pairwise at distance $\Omega(1)$ is precolored from the corresponding lists,
then the precoloring extends to an $L$-coloring of $G$,
\item  if every non-null-homotopic cycle in $G$ has length $\Omega(g)$, and the graph $G$
is allowed to have crossings, but every two crossings are at distance $\Omega(1)$, then $G$ has
an $L$-coloring,
\item if $G$ has at least one  $L$-coloring, then it has at least $2^{\Omega(|V(G)|)}$ distinct
$L$-colorings.
\end{itemize}

We show that the above assertions are consequences of certain isoperimetric inequalities satisfied
by $L$-critical graphs, and we
 study the structure of families of embedded graphs that satisfy those inequalities.
It follows that the above assertions
%Thus the same conclusions 
hold for other coloring problems, as long as the corresponding critical graphs
 satisfy the same inequalities.

\vfill \baselineskip 11pt \noindent 27 July 2013. Revised \mydate.
%An extended abstract of this paper appeared in~\cite{AsaPosThosubabstr}.
%\hfil\break\noindent This is an incomplete draft not intended for distribution.
\vfil\eject
%End of title page
%%%%%%%%%%%%%%%%%%%%%%%%%%%%%%%%%%%%%%%%%%%%%%%%%%%%%%%%%%%%%%%%%%%%%%%%%%%%
\baselineskip 18pt

\section{Introduction}
\label{sec:shortintro}

All graphs in this paper are finite, and have no loops or multiple
edges. Our terminology is standard, and may be found
in~\cite{Bollobas} or~\cite{Diestel}.
 In particular, {\em cycles} and {\em paths} have no repeated vertices, and
by a {\em coloring} of a graph $G$ we mean a proper vertex-coloring; that is,
a mapping $\phi$ with domain $V(G)$ such that $\phi(u)\ne\phi(v)$
whenever $u,v$ are adjacent vertices of $G$.
By a \emph{surface} we mean a (possibly disconnected) compact $2$-dimensional
manifold with possibly empty boundary.
The {\em boundary} of a surface $\Sigma$ will be denoted by bd$(\Sigma)$.
\aa{%
By the classification theorem every surface $\Sigma$ is obtained from the disjoint union
of finitely many spheres by adding $a$ handles, $b$ crosscaps and removing the interiors
of finitely many  pairwise disjoint closed disks. In that case the {\em Euler genus} of $\Sigma$ is $2a+b$.
}%

Motivated by graph coloring problems we study families of graphs that satisfy
the following isoperimetric inequalities.
By an {\em embedded graph} we mean a pair $(G,\Sigma)$, where $\Sigma$ is a surface without boundary
and $G$ is a graph embedded in $\Sigma$. We will usually suppress the surface and speak about an embedded graph $G$,
and when we will need to refer to the surface we will do so by saying that $G$ is embedded in a surface $\Sigma$.
Let $\F$ be a family of non-null embedded graphs.
We say that $\F$ is \emph{hyperbolic} if there exists a constant $c>0$
such that if $G\in\F$ is a graph that is embedded in a surface
$\Sigma$, then for every closed curve $\cc{\xi}:{\mathbb S}^1\to\Sigma$ that bounds
an open disk $\Delta$ and intersects $G$ only in vertices,
if $\Delta$ includes a vertex of $G$, then
the number of vertices of $G$ in $\Delta$ is at most
$c(|\{x\in {\mathbb S}^1\,:\, \cc{\xi}(x)\in V(G)\}|-1)$.
We say that $c$ is a \emph{Cheeger constant} for $\F$.

%Our primary example of a hyperbolic family is the family of five list critical graphs.

The hyperbolic families of interest to us arise from coloring problems, more specifically from a generalization
of the classical notion of coloring, introduced by Erd\"os, Rubin and Taylor~\cite{ErdRubTay} and known as 
{\em list coloring} or {\em choosability}.
We need a few definitions in order to introduce it.
%Our main topic  of interest is the following generalization of coloring.
Let $G$ be a graph and let $L=(L(v):v\in V(G))$ be
a collection of lists. 
If each set $L(v)$ is non-empty, then we say that $L$ is a {\em list assignment} for $G$.
If $k$ is an integer and $|L(v)|\ge k$ for every $v\in V(G)$, then we say that $L$ is
a {\em $k$-list assignment} for $G$.
An {\em $L$-coloring} of $G$ is a mapping
$\phi$ with domain $V(G)$ such that $\phi(v)\in L(v)$ for every $v\in V(G)$ and
$\phi(v)\ne\phi(u)$ for every pair of adjacent vertices $u,v\in V(G)$.
Thus if all the lists are the same, then this reduces to the notion of
coloring.
We say that a graph $G$ is \emph{$k$-choosable}, also called \emph{$k$-list-colorable}, if $G$ has an $L$-coloring for every $k$-list assignment~$L$. 
%$L=(L(v):v\in V(G))$ such that $|L(v)|\ge k$ for every $v\in  V(G)$.
The \emph{list chromatic number} of $G$, denoted by $ch(G)$, also known as {\em choosability}, 
is the minimum $k$ such that $G$ is $k$-list-colorable.
A graph $G$ is {\em $L$-critical} if $G$ is not $L$-colorable, but every proper subgraph of $G$ is.
To capture the three most important classes of coloring problems to which our theory applies, let us
say that  a list assignment $L$ for a graph $G$  is of {\em type $345$} or a {\em type $345$ list assignment} if either
\begin{itemize}
\item $L$ is a $5$-list assignment, or
\item $L$ is a $4$-list assignment and $G$ is triangle-free, or
\item $L$ is a $3$-list assignment and $G$ has no cycle of length four or less.
\end{itemize}
We are now ready to to describe the three most  interesting hyperbolic families  of  graphs.

%By a {\em canvas} we mean a pair $(G,L)$, where $G$ is a graph and $L$ is a list assignment for $G$.
%We extend all graph terminology to canvases in the obvious way, so that, in particular, we may speak of embedded canvases.
%A canvas $(G,L)$ is {\em critical} if $G$ is $L$-critical.

\begin{theorem}
\label{thm:precritishyperbolic}
The family of all embedded graphs that are $L$-critical for some type $345$ list assignment $L$ is  hyperbolic.
%Let $\F$ be the family of all embedded graphs $G$  such that
%there exists a list assignment $L$ satisfying
%$L=(L(v):v\in V(G))$ such that $|L(v)|\ge5$ for all $v\in V(G)$,
%every proper subgraph of $G$ is $L$-colorable but $G$ itself is not.
%Then $\F$ is hyperbolic.
\end{theorem}

\noindent 
Theorem~\ref{thm:precritishyperbolic} is a special case of the more general Theorem~\ref{thm:Cisgood},
also stated as Theorem~\ref{thm:Cisgood-2} and proved in Section~\ref{sec:can}.
Additional examples of hyperbolic families  are given in Section~\ref{sec:hyperdef}.
We study hyperbolic families in Section~\ref{sec:structure}.
Corollary~\ref{cor:ewloccyl} implies the following.
A family $\F$ of embedded graphs is {\em closed under curve cutting} if for every embedded graph $(G,\Sigma)\in\F$
and every  simple closed curve $\cc{\xi}$ in $\Sigma$ whose image is disjoint from $G$, if $\Sigma'$ denotes
the surface  obtained from $\Sigma$ by cutting open along $\cc{\xi}$
\zz{%
and attaching disk(s) to the resulting curve(s)%
}, then the  embedded graph $(G,\Sigma')$
belongs to $\F$.

%{\bf define curve-cutting}

\begin{theorem}
\label{thm:ew}
For every  hyperbolic family $\F$ of embedded graphs
that is closed under curve cutting there exists a constant $k>0$ such that
every  graph $G\in\F$ \aa{embedded in a surface of Euler genus $g$}
 has a non-null-homotopic cycle of length at most
$k\log(g+1)$.
\end{theorem}

\noindent 
\xx{We prove Theorem~\ref{thm:ew} immediately  after Corollary~\ref{cor:ewloccyl}.}

Theorem~\ref{thm:ew} is already quite useful. For instance, by Theorem~\ref{thm:precritishyperbolic} it implies the first result stated in the abstract. 
Our main theorem about hyperbolic families is
 Theorem~\ref{thm:sleevedec1}, which describes their structure.
%Section~\ref{sec:structure} 
 %a structure theorem for hyperbolic families.
Roughly speaking, it says that every member of a hyperbolic family on a fixed surface can be obtained from a bounded size 
graph by attaching ``narrow cylinders".
The theorem suggests the following strengthening of hyperbolicity.
Let $G$ be a member of a hyperbolic family $\F$ embedded in a surface $\Sigma$, and let $\Lambda\subseteq\Sigma$ be
a cylinder such that the two boundary components of $\Lambda$ are cycles $C_1,C_2$ of $G$.
If for all $G$ and $\Lambda$
the number of vertices of $G$ in $\Lambda$ is bounded by some function of $|V(C_1)|+|V(C_2)|$, then we say
that $\F$ is {\em strongly hyperbolic}.
Theorem~\ref{thm:Cisgood} implies that the families from Theorem~\ref{thm:precritishyperbolic}  are, in fact, strongly  hyperbolic.

We study strongly hyperbolic families in Section~\ref{sec:strongly}.
The following is a special case of Theorem~\ref{thm:free5}.

\begin{theorem}
\label{thm:prefree5}
For every strongly hyperbolic family $\F$ of embedded graphs
that is closed under curve cutting there exists a  constant $\beta>0$ such that
every  graph $G\in\F$ embedded in a surface of  Euler genus $g$  has at most $\beta g$ vertices.
%, and has a non-null-homotopic cycle of length at most $k\log(g+1)$.
\end{theorem}

By applying Theorem~\ref{thm:prefree5}  to the families  from Theorem~\ref{thm:precritishyperbolic}
 we deduce the second result stated in the abstract.
To prove the next  result in the abstract we need to extend the notion of  hyperbolicity to rooted graphs.
A {\em rooted graph} is a pair $(G,X)$, where $G$ is a graph and $X\subseteq V(G)$.
We extend all the previous terminology to rooted graphs in the natural way, so we can speak about rooted embedded graphs. 
The definitions of hyperbolicity and strong hyperbolicity extend to rooted graphs with the proviso that the disk $\Delta$
and cylinder $\Lambda$ contain no member of $X$ in their interiors.
%A rooted graph $(G,X)$ is {\em $L$-critical} and if for every proper subgraph $G'$ of $G$
%
Now we can state a more general special case of Theorem~\ref{thm:free5}.
\yy{A proof is given immediately after Theorem~\ref{thm:free5}.}

\begin{theorem}
\label{thm:prefree5rooted}
For every strongly hyperbolic family $\F$ of rooted embedded graphs
that is closed under curve cutting there exist  constants $\beta,k,d>0$ such that
every rooted  graph $(G,X)\in\F$ embedded in a surface of Euler  genus $g$ has at most $\beta(g+|X|)$ vertices, and either
has a non-null-homotopic cycle of length at most $kg$, or some two vertices in $X$ are at a distance at most $d$, or $V(G)=X$.
\end{theorem}

By applying Theorem~\ref{thm:prefree5rooted} to the rooted versions of the families from Theorem~\ref{thm:precritishyperbolic} 
we deduce the following theorem,
which we state and formally prove in a slightly more general form as Theorem~\ref{Precolored}.

\begin{theorem}
\label{thm:preAlbertson}
There exist constants $k,d>0$ such that the following holds.
Let $(G,X)$ be a rooted graph embedded in a surface $\Sigma$, and let $L$ be a type $345$ list assignment for $G$  such that 
%either
%\begin{itemize}
%\item $L$ is a $5$-list assignment, or
%\item $L$ is a $4$-list assignment and $G$ is triangle-free, or
%\item $L$ is a $3$-list assignment and $G$ has no cycle of length four or less.
%\end{itemize}
%If 
every non-null-homotopic cycle in $G $ has length at least $kg$ and every two vertices in $X$ are at a distance at least $d$.
Then \xx{every $L$-coloring of $X$ extends to an $L$-coloring of} $G$.
\end{theorem}

\noindent 
Theorem~\ref{thm:preAlbertson} proves the third result stated in the abstract.
The fourth result follows similarly \xx{(formally we prove it as Theorem~\ref{CrossingSurface})}, but it requires a generalization of the above two theorems to embedded graphs
with a set of distinguished cycles. That is why we will introduce the notion of embedded graphs with rings in Section~\ref{sec:mainresults},
where a ring is a cycle or a complete graph on at most two vertices.
The last result mentioned in the abstract needs the strong hyperbolicity of a different family of embedded graphs.
\xx{It follows from Theorem~\ref{ExpSurfacetheorem2}.}%
\REM{Deleted: We refer to Section~\ref{sec:can} for details.}

%, using a version of Theorem~\ref{thm:precritishyperbolic}
%for rooted graphs, which we omit. Instead, we referred to ???.

%the full version of Theorem~\ref{thm:free5},
%   and so we do that later. 

%Our coloring results are all consequences of Theorem~\ref{thm:free5}.
%To make the theory more readily applicable for coloring we introduce
%in Section~\ref{sec:label} the notion of labeled graph and  reformulate Theorem~\ref{thm:free5} in terms of labeled graphs.
%A labeled graph is a graph $G$ together with a set of mappings with domain $V(G)$ satisfying some mild axioms.
%Thus, for example, a graph together with all its $L$-colorings forms a labeled graph.
%We formulate the notion of criticality for labeled graphs, and that allows us to restate Theorem~\ref{thm:free5}
%for labeled graphs.
%It holds for all families of labeled graphs such that the critical members of the family form as strongly hyperbolic family.
%Theorem~\ref{thm:main5list} is then an immediate consequence as well as its counterparts for graphs of girth five and lists of size three, 
%and for graphs of girth four and lists of size four.

The paper is organized as follows. In Section~\ref{sec:history} we survey results on list coloring graphs on surfaces.
In Section~\ref{sec:mainresults} we formulate Theorem~\ref{thm:maincol},  our main coloring result,
and derive a number of consequences from it, assuming a generalization of Theorem~\ref{thm:precritishyperbolic},
\yy{stated as Theorem~\ref{thm:Cisgood},}
which we do not prove until Section~\ref{sec:can}.
Theorem~\ref{thm:maincol} is an easy consequence of Theorem~\ref{thm:free5}, our main result about strongly
hyperbolic families.
In the short Section~\ref{sec:expmany} we prove a lemma about exponentially many extensions of a coloring of a facial cycle in a planar graph.
In Section~\ref{sec:hyperdef} we present examples of hyperbolic families. We investigate their structure in depth in Section~\ref{sec:structure},
where the main result is  Theorem~\ref{thm:sleevedec1}, giving structural information about members of a hyperbolic family.
In Section~\ref{sec:strongly} we give examples of strongly hyperbolic families and investigate their structure.
The main theorem is Theorem~\ref{thm:free5}, but there is also the closely related Theorem~\ref{thm:free6}.
\xx{The latter eliminates an outcome of the former at the expense of a worse bound.}%
\REM{Deleted: The two differ in bounds they give; one gives a better bound on one parameter and worse on another parameter.}
In the final Section~\ref{sec:can} we complete the proof of the strong hyperbolicity of the three main families of interest
(a.k.a.\  a generalization of Theorem~\ref{thm:precritishyperbolic}), and prove
Theorem~\ref{thm:maincol}. We also formulate and prove the closely related Theorem~\ref{thm:maincanvar},
which has an identical proof, using Theorem~\ref{thm:free6} instead of Theorem~\ref{thm:free5}.

%derive all the list coloring consequences of Theorem~\ref{thm:free5}.
%{\bf to be completed}

%\newpage
%
%{\bf Paragraph to be deleted}
%Theorem~\ref{thm:free5}, which is reminiscent of Theorem~\ref{thm:main5list}, is our main result about strongly 
%hyperbolic families, and perhaps should be regarded as the main result of the paper.
%Theorem~\ref{thm:free6} is its close relative;  the difference between the two is a trade-off between various parameters.
%By applying either of these theorems to the family from Example~\ref{ex:g5strong} we deduce the following two theorems.
%The first follows immediately and the second is proved in Section~\ref{sec:applic}.

\section{Survey of Coloring Graphs on Surfaces}
\label{sec:history}

%{\bf Entire section to be revised}

In this section we survey the main results about coloring graphs on surfaces in order
  to place our work in a historical context.
However,  familiarity with this section is not required in order to understand the rest of the paper.

%All graphs in this paper are finite, and have no loops or multiple
%edges. Our terminology is standard, and may be found
%in~\cite{Bollobas} or~\cite{Diestel}.
% In particular, {\em cycles} and {\em paths} have no repeated vertices, and
%by a {\em coloring} of a graph $G$ we mean a proper vertex-coloring; that is,
%a mapping $\phi$ with domain $V(G)$ such that $\phi(u)\ne\phi(v)$
%whenever $u,v$ are adjacent vertices of $G$.
%
%The rest of this section is devoted to a historical survey.
%We state our results on $5$-list-coloring in the next section,
%and in Section~\ref{sec:outline} we discuss our more general approach
%to coloring using isoperimetric inequalities, and further consequences.
%The organization of the rest of the paper is described at the end of
% Section~\ref{sec:outline}.

\subsection{Coloring Graphs on Surfaces}
The topic of coloring graphs on surfaces dates back to 1852, when Francis Guthrie formulated the Four Color Conjecture,
which has since become the Four Color Theorem~\cite{4CT1, 4CT2, 4CTRSST}.
Even though this theorem, its history and ramifications are of substantial interest, we omit any further discussion of it,
because our primary interest lies in surfaces other than the sphere.
Thus we skip to the next development, which is the well-known Heawood formula from 1890, asserting that
a graph embedded in a surface $\Sigma$ of Euler genus $g\ge1$ can be colored with at most 
$H(\Sigma):=\lfloor(7 + \sqrt{24g + 1})/2\rfloor$ colors.  
This formula is actually a fairly easy concequence of Euler's formula. It turned out that the bound it gives is best possible
for every surface except the Klein bottle, but that was not established until the seminal work  of 
Ringel and Youngs~\cite{RingelYoungs} in the 1960s.
Franklin~\cite{Franklin} proved that every graph embeddable in the Klein bottle requires only six colors, whereas Heawood's bound predicts seven. Dirac~\cite{Dirac1} and Albertson and Hutchinson~\cite{AlbHutch1} improved Heawood's result by showing that every graph in $\Sigma$ is actually $(H(\Sigma)-1)$-colorable, unless it has a subgraph isomorphic to the complete graph on $H(\Sigma)$ vertices. 

Thus the maximum chromatic number for graphs embeddable in a surface has been determined for every surface.
However, a more modern approach to coloring graphs on surfaces, initiated by Thomassen in the 1990s, is based on the
observation that only very few graphs embedded  in a fixed surface $\Sigma$ have chromatic number close to $H(\Sigma)$;
in fact, as we are about to see, most have chromatic number at most five.
To make this assertion precise, we need a definition. Let $t\ge2$ be an integer.
We say that a graph $G$ is \emph{$t$-critical} if it is not $(t-1)$-colorable, but every proper subgraph of $G$ is $(t-1)$-colorable. Using Euler's formula, Dirac~\cite{Dirac2} proved that for every $t\ge 8$ and every surface $\Sigma$ there are only finitely many $t$-critical graphs that embed in $\Sigma$. By a result of Gallai~\cite{Gallai}, this can be extended to $t = 7$. We will see in a moment that this extends to $t=6$ by a deep result of Thomassen.
First however, let us mention a predecessor of this theorem, also due to Thomassen~\cite{Tho5colmaps}.

\begin{theorem}\label{ThomEdgeWidth}
For every surface $\Sigma$ there exists an integer $\rho$ such that if
$G$ is a graph embedded in $\Sigma$ such that every non-null-homotopic cycle in $G$ has length at least $\rho$, then $G$ is $5$-colorable.
\end{theorem}

%By taking $\cal F$  to be the family from Example~\ref{ex:g4l4} we obtain the following
 The following related result was obtained by Fisk and Mohar~\cite{FisMoh}.

\begin{theorem}
\label{thm:fiskmohar}
There exists an absolute constant $\gamma$ such that if
$G$ is a graph embedded in a surface $\Sigma$ of Euler genus $g$  such that every non-null-homotopic cycle in $G$ has length at least $\gamma(\log g+1)$, then 
\begin{itemize}
\item $G$ is $6$-colorable, and
\item if $G$ is triangle-free, then it is $4$-colorable, and
\item if $G$ has no cycles of length five or less, then it is $3$-colorable.
\end{itemize}
\end{theorem}

The other theorem of Thomassen~\cite{ThomCritical} reads as follows.

\begin{theorem}\label{ThomCrit}
For every surface $\Sigma$, there are (up to isomorphism) only finitely many $6$-critical graphs that embed in $\Sigma$.
\end{theorem}

It is clear that Theorem~\ref{ThomCrit} implies Theorem~\ref{ThomEdgeWidth}, but we state them separately for two reasons.
One is historical, and the other is that we will be able to improve the bound on $\gamma$  in  Theorem~\ref{ThomEdgeWidth}
 to an asymptotically best possible value, and that version is no longer a consequence of Theorem~\ref{ThomCrit}. 
Theorem~\ref{ThomCrit} immediately implies a polynomial-time algorithm for deciding whether a graph on a fixed surface is $5$-colorable.
By a result of Eppstein~\cite{Eppstein2} there is, in fact, a linear-time algorithm, as follows.

\begin{corollary}
\label{cor:surfcoloralgo}
For every surface $\Sigma$ there exists a linear-time algorithm that decides whether an input graph embedded in $\Sigma$
is $5$-colorable.
\end{corollary}

Corollary~\ref{cor:surfcoloralgo} guarantees the {\em existence} of an algorithm. To construct an algorithm we would need to find the list of all the $6$-critical
graphs that can be embedded in $\Sigma$.
Such lists are known only for the projective plane~\cite{AlbHutch1}, the torus~\cite{ThomTorus}, and the Klein bottle~\cite{KB, KB2}.
However, the proof of  Theorem~\ref{ThomCrit} can be converted to a finite-time algorithm that will output the desired list;
thus the algorithm, even though not explicitly known, can be constructed in finite time.

Theorem~\ref{ThomCrit} is best possible as it does not extend to $5$-critical graphs.
% and $t\le 4$ for the sphere. 
Indeed, Thomassen \cite{ThomCritical}, using a construction of Fisk~\cite{Fisk}, constructed infinitely many $5$-critical graphs that embed in 
any given surface other than the sphere. 
The next logical question then is whether Corollary~\ref{cor:surfcoloralgo} holds for $3$- or $4$-coloring.
For $3$-coloring the answer is no, unless P=NP, because $3$-colorability of planar graphs is one of the original NP-complete
problems of Garey and Johnson~\cite{GarJoh}.
For $4$-coloring there is a trivial algorithm when $\Sigma$ is the sphere by the Four Color Theorem, and for all other
surfaces it is an open problem. Given the difficulties surrounding the known proofs of the Four Color Theorem, 
the prospects for a resolution of that open problem in the near future are not very bright.

%One may also ask how large the $6$-critical graphs on a fixed surface can be. Theorem~\ref{ThomCrit} implies an implicit bound 
%on the number of vertices in a $6$-critical graph embeddable in $\Sigma$ in terms of the genus of $\Sigma$. However, Thomassen did 
%not prove an explicit bound. Postle and Thomas~\cite{PostleThomas} gave a new proof of Theorem~\ref{ThomCrit} that also 
%provides an explicit bound. They proved the following.

%\begin{theorem}\label{RegularLinear}
%The number of vertices of a $6$-critical graph embedded in a surface $\Sigma$ is $O(g(\Sigma))$.
%\end{theorem}

%Their bound is best possible up to a multiplicative constant as demonstrated by Hajos' construction on copies of $K_6$. 
%Furthermore it follows then from a result of Bender and Canfield~\cite{ExpSurface} that the number of $6$-critical graphs is at most $2^{O(g(\Sigma))}$, which is best possible given that the number of trees on $n$ vertices is exponential in $n$.
%Other girths?

A classical theorem of Gr\"otzsch~\cite{Gro} asserts that every triangle-free planar graph is $3$-colorable.
Thomassen~\cite{Thom3Color, Thom3ListColor, ThoShortlist} found three reasonably short proofs, and extended the theorem to the projective
plane and the torus. However, for the extensions to hold one has to assume that the graph
has no cycles of length at most four.
Those results prompted Thomassen to formulate the following research program.
First, let us recall that the {\em girth} of a graph is the maximum integer $q$ (or infinity)
such that every cycle has length at least $q$.
Thomassen asked for which pairs of integers $k$ and $q$  is it the case that for every
surface $\Sigma$ there are only finitely many $(k+1)$-critical graphs of girth at least
$q$ in $\Sigma$.
If the answer is positive, then $k$-colorability of graphs of girth at least $q$ in
$\Sigma$ can be tested in linear time.
If the answer is negative, then there is still the question whether 
the $k$-colorability of graphs of girth at least $q$ in
$\Sigma$ can be tested in polynomial time.

These questions have now been resolved, except for the already mentioned case $k=4$
and $q=3$.
Let us briefly survey the results.
It is fairly easy to see that for every surface $\Sigma$ there are only finitely many 
$(k+1)$-critical graphs of girth at least $q$ in $\Sigma$ whenever either 
$q\ge6$, or $q\ge4$ and $k\ge4$, or $k\ge6$.
Theorem~\ref{ThomCrit} states that this also holds when $k=5$, and the following deep 
theorem, also due to Thomassen~\cite{ThoGirth5}, says that it also holds when $k=3$
and $q=5$.

\begin{theorem}
\label{thm:Colorcritgirth5}
For every surface $\Sigma$ there are only finitely many $4$-critical graphs of girth at least
five in $\Sigma$.
\end{theorem}

\noindent 
Dvo\v r\'ak, Kr\'al' and Thomas strengthened this theorem by giving an asymptotically
best possible bound on the size of the $4$-critical graphs, as follows.

\begin{theorem}\label{LinearGirth5}
There exists an absolute constant $c$ such that every $4$-critical graph
 of girth at least five embedded in a surface of Euler genus $g$ has at most $cg$ vertices. 
\end{theorem}

%\begin{proof}
%Let $\beta$ be as in Theorem~\ref{thm:main5list}, let $G$ and $L$ be as stated.
%By Theorem~\ref{thm:free5} applied to $G$ and $\R_0=\emptyset$ there exists a subgraph $G'$ of $G$ 
%such that 
% $|V(G)|\le \beta g$,
%and either $G'$ is not $L$-colorable or $G$ is $L$-colorable.
%Since $G$ is $L$-critical it follows that $G'=G$, and hence the theorem holds.
%\end{proof}

%\noindent
%For ordinary coloring Theorem~\ref{LinearListGirth5} is proved in~\cite{3Col3}, using a different method.

Thus the only case we have not yet discussed is $k=3$ and $q=4$; that is, $3$-coloring triangle-free
graphs on a fixed surface.
There are infinitely many $4$-critical triangle-free graphs on any surface other than the sphere,
but, nevertheless,  $3$-colorability of triangle-free graphs on any fixed surface can be tested 
in polynomial time~\cite{3ColLinearTime,DvoKraTho6}.
However, this algorithm requires different methods.
The theory we develop in this paper does not seem to apply to $3$-coloring triangle-free graphs. 

%XXXXX

\subsection{List-Coloring Graphs on Surfaces}

Our main topic  of interest is list coloring, a generalization of ordinary vertex coloring,
introduced in the Introduction.
%the following generalization of coloring.
%Let $G$ be a graph and let $L=(L(v):v\in V(G))$ be
%a collection of lists. 
%If each set $L(v)$ is non-empty, then we say that $L$ is a {\em list assignment} for $G$.
%If $k$ is an integer and $|L(v)|\ge k$ for every $v\in V(G)$, then we say that $L$ is
%a {\em $k$-list assignment} for $G$.
%An {\em $L$-coloring} of $G$ is a mapping
%$\phi$ with domain $V(G)$ such that $\phi(v)\in L(v)$ for every $v\in V(G)$ and
%$\phi(v)\ne\phi(u)$ for every pair of adjacent vertices $u,v\in V(G)$.
%Thus if all the lists are the same, then this reduces to the notion of
%coloring.
%We say that a graph $G$ is \emph{$k$-choosable}, also called \emph{$k$-list-colorable}, if $G$ has an $L$-coloring for every $k$-list assignment $L$. 
%%$L=(L(v):v\in V(G))$ such that $|L(v)|\ge k$ for every $v\in  V(G)$.
%The \emph{list chromatic number} of $G$, denoted by $ch(G)$, also known as {\em choosability}, is the minimum $k$ such that $G$ is $k$-list-colorable.
%
%Note that $\chi(G)\le ch(G)$ as a $k$-coloring is a $k$-list-coloring where all the lists are the same. 
%In fact, Dirac's Theorem\cite{Dirac1} has been generalized to list-coloring by Bohme, Mohar and Stiebitz~\cite{Bohme} for most surfaces; 
%the missing case, $g(\Sigma) = 3$, was completed by Kral and Skrekovski~\cite{Kral}. 
%
List coloring differs from regular coloring in several respects. 
One notable example is that the Four Color Theorem does not generalize to list-coloring. 
Indeed, Voigt~\cite{Voigt} constructed a planar graph that is not $4$-choosable.
On the other hand Thomassen~\cite{ThomPlanar} proved the following remarkable theorem with an outstandingly short and beautiful proof.

\begin{theorem}\label{PlanarChoosable}
Every planar graph is $5$-choosable.
\end{theorem}

\yy{
\noindent
The following theorem was also proven  by Thomassen, first in~\cite{Thom3ListColor} and later  he found a shorter
proof in~\cite{ThoShortlist}.
\begin{theorem}\label{Planar3Choosable}
Every planar graph of girth at least five is $3$-choosable.
\end{theorem}
\noindent
The corresponding theorem that every planar graph of girth at least four  is $4$-choosable is an easy consequence of Euler's formula;
a slightly stronger version of it is part of Theorem~\ref{thm:extend4cycle} below.
For later reference we need the following small generalization of the last three results mentioned. It follows easily from known  results.
 A graph has {\em crossing number at most one} if it can be drawn in the plane such that at most one pair of edges cross.
\begin{theorem}\label{CrossOneChoosable}
Let $G$ be a graph of  crossing number at most one and let $L$ be a type $345$ list assignment for $G$.
Then $G$ is $L$-colorable.
\end{theorem}
\begin{proof}
Assume first that $G$ is  triangle-free and that $L$ is a $4$-list-assignment. The  graph $G$ embeds in the projective plane.
By Euler's formula applied to a projective planar embedding of $G$ we deduce that $G$ has a vertex of degree at most three,
and the theorem follows by induction by deleting such vertex.
For  the other two cases we may assume, by Theorems~\ref{PlanarChoosable} and~\ref{Planar3Choosable}, that $G$
is not planar. Thus $G$ has a planar drawing where exactly two edges cross, say $u_1v_1$ and $u_2v_2$.
Since $G$ is not planar, the vertices $u_1,v_1,u_2,v_2$ are pairwise distinct.
Let $G':=G\setminus\{u_1v_1,u_2v_2\}$.
When $L$ is a $5$-list assignment, the theorem follows from~\cite[Theorem~2]{ThomWheels} applied to
the graph obtained from $G'$ by 
%deleting the edges $u_1v_1$ and $u_2v_2$ and 
adding the edges $u_1u_2$, $u_2v_1$, $v_1v_2$ and
$v_2u_1$, except for those that are already present, because~\cite[Theorem~2]{ThomWheels} guarantees that we can precolor the
subgraph of $G'$ induced by the vertices $u_1,u_2,v_1,v_2$ arbitrarily.
Thus we may assume that $G$ has girth at least five.
Let us assume first that some two of the vertices $u_1,v_1,u_2,v_2$ are adjacent in $G'$, say $u_1$ and $u_2$ are.
By~\cite[Theorem~2.1]{Thom3ListColor} the graph $G'$ has an $L'$-coloring, where $L'(u_1)\subseteq L(u_1)$, 
$L'(u_2)\subseteq L(u_2)$, $|L'(u_1)|=| L'(u_2)|=1$, $L'(u_1)\ne L'(u_2)$, $L'(v_1)=L(v_1)-L'(u_1)$, $L'(v_2)=L(v_2)-L'(u_2)$ and 
$L'(x)=L(x)$ for every other vertex $x$ of $G'$.
Such an $L'$-coloring of $G'$ is an $L$-coloring of $G$,  as desired.
We may therefore assume that no two of the vertices $u_1,v_1,u_2,v_2$ are adjacent in $G'$.
Let $G''$ be obtained from $G'$ by adding two new vertices $x,y$ and three edges so that $u_1xyu_2$ will become a path in $G''$.
Let $c_1\in L(u_1)$ and $c_2\in L(u_2)$.
By~\cite[Theorem~2.1]{ThoShortlist} applied to the graph $G''$  there exists an $L$-coloring $\phi$ of $G'$ such that
$\phi(u_1)=c_1$, $\phi(u_2)=c_2$, $\phi(v_1)\ne c_1$ and $\phi(v_2)\ne c_2$.
Such an $L$-coloring of $G'$ is an $L$-coloring of $G$,  as desired.
\end{proof}
}

Let us  recall that if
 $L$ is a list assignment for a graph $G$, then we say that $G$ is \emph{$L$-critical} if $G$ does not have an $L$-coloring but every proper subgraph of $G$ does. 
Thomassen~\cite[Theorem~4.4]{ThomCritical} proved the following theorem.

\begin{theorem}\label{Thom7}
Let $G$ be a graph embedded in a surface $\Sigma$ of Euler genus $g$. Let $L$ be a list assignment for $G$ and let $S$ be a set of vertices in $G$ such that 
%$L(v)\ne\emptyset$ for all $v\in V(G)$ and 
$|L(v)|\ge 6$ for each $v\in V(G)\setminus S$. If $G$ is $L$-critical, then $|V(G)|\le 150(g+|S|)$.
\end{theorem}

Naturally then, Thomassen~\cite[Problem~5]{ThomCritical} asked whether Theorem~\ref{ThomCrit} generalizes to list-coloring,
\yy{%
and in~\cite[Conjecture~6.1]{ThomExpQuestion} he conjectured that it indeed does.
}

\begin{conjecture}\label{FiniteCrit}
For every surface $\Sigma$ there are (up to isomorphism) only finitely many  graphs $G$  such that $G$
embeds in $\Sigma$ and is $L$-critical for some $5$-list-assignment~$L$.
\end{conjecture}

\noindent
A proof of Conjecture~\ref{FiniteCrit} was announced by Kawarabayashi and Mohar~\cite{KM}, 
but no proof appeared so far.
We give a proof  of Conjecture~\ref{FiniteCrit} in Theorem~\ref{LinearListSurface} below.
%
%Indeed, they claim a strengthening of Conjecture~\ref{FiniteCrit} when there are precolored vertices as in Theorem~\ref{Thom7}, 
%though not with a linear bound. We will nevertheless provide an independent proof of Conjecture~\ref{FiniteCrit} in Chapter 5. 
%Our proof also gives a new proof of Theorem~\ref{ThomCrit} as his techniques do not apply for list-coloring. 
%In fact, we will also generalize the linear bound of Postle and Thomas to list-coloring.
%
Meanwhile, DeVos, Kawarabayashi, and Mohar~\cite{DeVos} generalized Theorem~\ref{ThomEdgeWidth} to list-coloring.

\begin{theorem}\label{DeVosWidth}
For every surface $\Sigma$ there exists an integer $\gamma$ such that if
$G$ is a graph embedded in $\Sigma$ such that every non-null-homotopic cycle in $G$ has length at least $\gamma$, then $G$ is $5$-list-colorable.
%If $G$ is a graph embedded in a surface $\Sigma$ such that $ew(G)\ge 2^{\Omega(g(\Sigma))}$, then $G$ is $5$-list-colorable.
\end{theorem}

\noindent
In Theorem~\ref{EdgeWidth} we give an independent proof of Theorem~\ref{DeVosWidth} 
with an asymptotically best possible bound on $\gamma$. 

Kawarabayashi and Thomassen~\cite{KawThofrom} proved that Theorem~\ref{PlanarChoosable}
``linearly extends" to higher surfaces, as follows.

\begin{theorem}\label{thm:linext5choose}
For every graph $G$ embedded in a surface of Euler genus $g$ there exists a set
$X\subseteq V(G)$ of size at most $1000g$ such that $G\setminus X$ is $5$-choosable.
\end{theorem}

We give an alternate proof of Theorem~\ref{thm:linext5choose} (admittedly with a worse constant) 
and extend it to other coloring problems in Theorem~\ref{thm:linext}.

%lower bound to 
%$\Omega(\log g(\Sigma))$ with a completely different proof. 
%Moreover it should be noted that while a linear bound is implied by a linear bound for $6$-list-critical graphs, a 
%logaritheoremic bound requires some additional ideas.

%3-List-Coloring?

\subsection{Extending Precolored Subgraphs}

An important proof technique is to extend a coloring of a subgraph to the entire graph.
We will need the following result.

%For a proof of the next result we need the following, which follows from~\cite[Theorem~2]{ThomWheels}.

\begin{theorem}
\label{thm:extend4cycle}
Let $G$ be a plane graph, let $P$  be a path in $G$ of length at most three  such that
some face of $G$ is incident with every edge of $P$ and let $L$ be a type $345$ list assignment for $G$.
Then every $L$-coloring of $G[V(P)]$  extends to an
$L$-coloring of $G$.
\end{theorem}

\begin{proof}
When $L$ is a $5$-list assignment, the theorem follows from~\cite[Theorem~2]{ThomWheels},
and when $G$ has girth at least five, it  follows from~\cite[Theorem~2.1]{ThoShortlist}.
Thus  we may assume that $G$  is  triangle-free and that $L$  is a $4$-list assignment.
In that case 
%the theorem follows by induction,  because 
Euler's formula implies that $G$ 
has a vertex in $V(G)-V(P)$ of degree at most three, and the theorem follows by induction by
deleting such vertex.
%or a face of length four incident with four vertices  of degree four, none of which belongs to $P$.
\end{proof}

\noindent 
Theorem~\ref{thm:extend4cycle} obviously does not extend to arbitrarily long paths. However,
in~\cite{PosThoLinDisk} we have shown the following.
By an {\em outer cycle} in a plane graph  we mean the cycle bounding the {\em outer face}
(the unique unbounded face).

%Of special importance to us is the following result of Thomassen~\cite[Theorem~5.5]{ThomCritical}.
%Let us recall that $G[X]$ denotes the subgraph of $G$ induced by the set $X$.

% Thomassen's proofs is to ask what colorings of a graph are possible when a certain subgraph has already been precolored. 
%To that end if $H$ is a subgraph of $G$ and $\phi$ is a coloring of $H$ and $\phi'$ is a coloring of $G$, we say that 
%$\phi$ \emph{extends} to $\phi'$ if $\phi'(v)=\phi(v)$ for all $v\in V(H)$. 
%Thomassen~\cite[Theorem~5.5]{ThomCritical}  proved the following.

%{\bf shorten?}

\begin{theorem}\label{LinearCycle0}
Let $G$ be a plane graph with outer cycle $C$, let $L$ be a $5$-list-assignment for $G$, and let $H$ be a minimal subgraph of
$G$ such that every $L$-coloring of $C$ that extends to an $L$-coloring of $H$ also extends to an $L$-coloring of $G$.
Then $H$ has at most $\zz{19}|V(C)|$ vertices.
%Then $G$ contains a connected subgraph $H$ with at most $23|V(C)|$ vertices such that for every $L$-coloring $\phi$ of $C$ either
%\begin{enumerate}
%\item[(i)] $\phi$ cannot be extended to an $L$-coloring of $H$, or
%\item[(ii)] $\phi$ can be extended to an $L$-coloring of $G$.
%\end{enumerate}
\end{theorem}

%Let $G$ be a $2$-connected plane graph with outer cycle $C$, and let $\phi$ be a $5$-coloring of $C$. 
%Then $G$ has a connected subgraph $H$ with at most $5^{|V(C)|^3}$ vertices such that either
%\begin{enumerate}
%\item[(i)] $\phi$ cannot be extended to a $5$-coloring of $H$, or
%\item[(ii)] $\phi$ can be extended to a $5$-coloring of $G$.
%\end{enumerate}
%\end{theorem}

\noindent 
Earlier versions of this theorem were proved for ordinary coloring 
%(that is, when all the lists in $L$ are equal)
by Thomassen~\cite[Theorem~5.5]{ThomCritical},
who proved it with $\zz{19}|V(C)|$ replaced by $5^{|V(C)|^3}$, and by Yerger~\cite{YerPhD},
who improved the bound to $O(|V(C)|^3)$.
If every vertex of $G\setminus V(C)$ has degree at least five and all its neighbors in $G$ belong to $C$,
then the only graph $H$ satisfying the conclusion of Theorem~\ref{LinearCycle0} is the graph $G$ itself.
It follows that the bound in Theorem~\ref{LinearCycle0} is asymptotically best possible.

Theorem~\ref{LinearCycle0} was a catalyst that  led to this paper, because it motivated us to introduce
the notion of a hyperbolic family of embedded graphs and to develop the theory presented here.
It is an easy consequence of Theorem~\ref{LinearCycle0}, and we show it formally in 
Theorem~\ref{thm:5listcritishyperbolic}, that the family of embedded graphs that are
$L$-critical for some $5$-list assignment is hyperbolic. 
%We show it formally in Theorem~\ref{thm:5listcritishyperbolic}.

%Yerger~\cite{Yerger} was able to improve Theorem~\ref{ThomCycle} by showing that there exists such an $H$ with $|V(H)|\le O(|C|^3)$. 
%We improve Yerger's bound to the asymptotically best possible bound of $|V(H)|\le O(|C|)$. 
%We will see in the next section that this improvement is of central importance to this paper,
%but let us continue the historical account first.

%Postle and Thomas further improved Theorem~\ref{ThomCycle} by proving that there exists such an $H$ with $|V(H)|\le O(|C|)$, 
%which is best possible up to a multiplicative constant. 

An analog of Theorem~\ref{LinearCycle0} for graphs of girth at least five and $3$-list assignments
was obtained by Dvo\v r\'ak and Kawarabayashi~\cite[Theorem~5]{DvoKawChoosegirth5}.

\begin{theorem}\label{DvoKawCycle}
Let $G$ be a plane graph of girth at  least five, let $C$ be the  outer cycle of $G$ and let  $L$ be
a $3$-list-assignment for $G$. 
Let  $H$ be a minimal subgraph of
$G$ such that every $L$-coloring of $C$ that extends to an $L$-coloring of $H$ also extends to an $L$-coloring of $G$.
Then $H$ has at most $37|V(C)|/3$ vertices.
%Then $G$ contains a connected subgraph $H$ with at most $37|V(C)|/3$ 
%vertices such that for every $L$-coloring $\phi$ of $C$ either
%\begin{enumerate}
%\item[(i)] $\phi$ cannot be extended to an $L$-coloring of $H$, or
%\item[(ii)] $\phi$ can be extended to an $L$-coloring of $G$.
%\end{enumerate}
\end{theorem}

\noindent 
Let us remark that, unlike the previous two results, the corresponding theorem for graphs of girth
at least four and $4$-list assignments is a fairly easy consequence of  Euler's formula.
We state it here for future reference. It follows from Theorem~\ref{lem:44}.

\begin{theorem}\label{LinCycle44}
Let $G$ be a triangle-free plane graph, let $C$ be the  outer cycle of $G$ and let  $L$ be
a $4$-list-assignment for $G$. 
Let  $H$ be a minimal subgraph of
$G$ such that every $L$-coloring of $C$ that extends to an $L$-coloring of $H$ also extends to an $L$-coloring of $G$.
Then $H$ has at most $20|V(C)|$ vertices.
%Then $G$ contains a connected subgraph $H$ with at most $37|V(C)|$\footnote{determine the constant} 
%vertices such that for every $L$-coloring $\phi$ of $C$ either
%\begin{enumerate}
%\item[(i)] $\phi$ cannot be extended to an $L$-coloring of $H$, or
%\item[(ii)] $\phi$ can be extended to an $L$-coloring of $G$.
%\end{enumerate}
\end{theorem}

Thomassen conjectured~\cite{ThomCritical} that if  $S$ is a set of vertices of a planar graph $G$ and any two distinct members of $S$ are 
at least some fixed distance apart, then any precoloring of $S$ extends to a $5$-coloring of $G$.
Albertson~\cite{Alb} proved this in 1997, and conjectured that it generalizes to $5$-list-coloring.
Albertson's conjecture was recently proved by Dvo\v{r}\'ak, Lidick\'y, Mohar, and Postle~\cite{AlbertsonsConj}, as follows.

\begin{theorem}\label{Albertson}
There exists an integer $D$ such that the following holds: 
If $G$ is a plane graph with a $5$-list assignment $L$ and $X\subseteq V(G)$ is such that every two distinct vertices of $X$ are at distance
at least $D$ in $G$, then any $L$-coloring of $X$ extends to an $L$-coloring of $G$.
\end{theorem}

\def\junk#1{}
\junk{
Dvorak, Lidicky, Mohar, and Postle~\cite{AlbertsonsConj} recently announced a proof of Albertson's conjecture. In Chapter 5, we will give a different proof of Albertson's conjecture more in line with the results of Axenovich, Hutchinson, and Lastrina~\cite{ListPrecoloring}. 
Indeed, Thomassen~\cite{ThomCritical} conjectured something more.
\begin{problem}\label{CyclesListThreeConj}
Let $G$ be a planar graph and $W\subset V(G)$ such that $G[W]$ is bipartite and any two components of $G[W]$ have distance at least $d$ from each other. Can any coloring of $G[W]$ such that each component is $2$-colored be extended to a $5$-coloring of $G$ if $d$ is large enough?
\end{problem}
Thomassen proved Problem~\ref{CyclesListThreeConj} when $W$ consists of two components (see Theorem 7.3 of \cite{ThomCritical}). Albertson and Hutchinson~\cite{AlbHutch3} proved Problem~\ref{CyclesListThreeConj}. As for list coloring, Theorem~\ref{Thom} proves Problem~\ref{CyclesListThreeConj} when $W$ has one component and the question asks whether the coloring can be extended to an $L$-coloring of $G$ where $L$ is a $5$-list-assignment. In Chapter 5, we prove the list-coloring version when $W$ has two components. We believe the results of Chapters 3 and 5 will also yield a proof when $W$ has any number of components but for now this remains open. Note that a proof of the list-coloring vertsion of Problem~\ref{CyclesListThreeConj} was announced without proof by Kawarabayashi and Mohar in~\cite{KM}, where the distance $d$ grows as a function of the number of components of $W$.
}

Albertson and Hutchinson~\cite{AlbHutch2} have generalized Albertson's result to other surfaces. They proved that if the graph is locally planar, then any precoloring of vertices far apart extends: 

\begin{theorem}\label{PrecoloringRegular}
Let $G$ be a graph embedded in a surface 
%$\Sigma\ne {\mathbb S}_0$ 
of Euler genus $g$ such that every non-null-homotopic cycle of $G$ has length at least
$208(2^g-1)$. 
If $X\subseteq V(G)$ is such that  every two distinct vertices of $X$ are at distance at least $18$ in $G$,
then any $5$-coloring of $X$ extends to a $5$-coloring of $G$.
\end{theorem}

%In section 6, we prove a similar generalization for list-coloring for surfaces when $ew(G)\ge \Omega(\log g)$ but the distance between vertices in $X$ is also at least $\Omega(\log g)$. Note that such a generalization for surfaces when $ew(G)\ge 2^{\Omega(g)}$ and the distance between vertices in $X$ grows as a function of $X$ was announced by Kawarabayshi and Mohar in~\cite{KM}.

Meanwhile, Dean and Hutchinson~\cite{DeanHutch} have proven that if $G$ is a graph embedded in a surface $\Sigma$, $L$ is a $H(\Sigma)$-list-assignment for $V(G)$ and $X\subset V(G)$ such that all distinct vertices $u, v\in X$ have pairwise distance at least four, then any $L$-coloring of $X$ extends to an $L$-coloring of $G$. They also asked whether their result extends to lists of
other sizes. We restate their question as follows.

%{\bf improve}

\begin{question}\label{WhichDist}
For which $k\ge 5$ do there exist $d_k,\gamma_k>0$ such that 
%their result holds when $H(\Sigma)$ is replaced by $k$?
if $G$ is a graph embedded in a surface $\Sigma$
such that every non-null-homotopic cycle in $G$ has length at least $\gamma_k$, 
 $L$ is a $k$-list-assignment for $V(G)$ and $X\subseteq V(G)$ is such that all distinct vertices $u, v\in X$ 
have pairwise distance at least $d_k$, then any $L$-coloring of $X$ extends to an $L$-coloring of $G$. 
\end{question}

\noindent 
In Theorem~\ref{Precolored} we give an independent proof of  a common generalization of
Theorems~\ref{Albertson} and~\ref{PrecoloringRegular}, which also answers Question~\ref{WhichDist}.
%and generalize it to arbitrary surfaces.
%Of course, some additional proviso is necessary in Question~\ref{WhichDist} as there exist graphs that are not $k$-list-colorable for $k\le H(\Sigma)$. Hence either $G$ being $L$-colorable or the stronger assumption of large edge-width seems to be required.

%In section 6, we generalize Theorem~\ref{PrecoloringRegular} to list-coloring. We also improve the necessary lower bound on the edge-width to be $\Omega(\log g(\Sigma))$, which is best possible up to a multiplicative constant. This also answers Question~\ref{WhichDist} in the affirmative for all $k\ge 5$ with the proviso that $G$ has edge-width $\Omega(\log g(\Sigma))$.

%Albertson and Hutchinson~\cite{AlbHutch4} also prove a similar version of Problem~\ref{CyclesListThreeConj} for surfaces. We believe our techniques can also generalize that result to list-coloring while improving their bound, but for now this remains open.

%\subsection{$5$-List-Coloring with Crossings Far Apart}

%\begin{definition}
We say a graph $G$ is \emph{drawn} in a surface $\Sigma$ if $G$ is embedded in $\Sigma$ except that there are allowed to exist points in $\Sigma$ where two---but only two---edges \emph{cross}. We call such a point of $\Sigma$ and the subsequent pair of edges of $G$, a \emph{crossing}.
%\end{definition}
Dvo\v{r}\'ak, Lidick\'y and Mohar~\cite{Crossings} proved that crossings far apart instead of precolored vertices also leads to $5$-list-colorability.
\cc{%
The distance between the crossing of the edge $u_1v_1$ with the edge $x_1y_1$ and the crossing of the edge $u_2v_2$
 with the edge $x_2y_2$  is the length of the shortest path in $G$ with one end in the set  $\{u_1,v_1,x_1,y_1\}$
and the other end in the set $\{u_2,v_2,x_2,y_2\}$.} 

\begin{theorem}\label{CrossingPlane}
If $G$ can be drawn in the plane with crossings pairwise at distance at least~15, then $G$ is $5$-list-colorable.
\end{theorem}

\noindent 
In Theorem~\ref{CrossingSurface} we give an independent proof of  Theorem~\ref{CrossingPlane} (but with a worse 
constant) and  generalize it
to arbitrary surfaces.

\junk{
In section 6, we provide an independent proof of Theorem~\ref{CrossingPlane}. Indeed, we generalize Theorem~\ref{CrossingPlane} to other surfaces. Of course, Theorem~\ref{CrossingPlane} does not generalize verbatim as some condition is necessary to even guarantee that a graph drawn on a surface without crossings is $5$-list-colorable. A lower bound on the edge-width seems to be a natural condition that guarantees that a graph is $5$-list-colorable. Thus we will generalize Theorem~\ref{CrossingPlane} to other surfaces with addition requirement of having large edge-width. Indeed we will prove that edge-width logaritheoremic in the genus of the surface suffices, which is best possible.
}

\subsection{Coloring with Short Cycles Far Apart}

%A classical of theorem of Gr\"otzsch~\cite{Gro} states that every triangle free planar graph is $3$-colorable.
Earlier we mentioned the classical of theorem of Gr\"otzsch~\cite{Gro} that every triangle-free planar graph is $3$-colorable.
%In fact, 
Aksionov~\cite{Aks} proved that the same is true even if the graph is allowed to have at most three triangles.
Havel~\cite{Havzbarvit} asked  whether
%
%To put this theorem in historical context, we should first mention an old conjecture of Havel~\cite{Havzbarvit}
%stating that
 there exists an absolute constant $D$ such that if every two triangles in a planar graph 
are at least distance $D$ apart, then the graph is $3$-colorable. 
Havel's conjecture was proved in~\cite{DvoKraThohavel}.
The conjecture cannot be extended to $3$-list-coloring verbatim, because there exist triangle-free planar
graphs that are not $3$-list-colorable.
However, Dvo\v{r}\'ak~\cite{3ChooseFarApart} proved the following.

\begin{theorem}
\label{thm:dvorak}
There exists an absolute constant $D$ such  that if $G$ is a planar graph and every two cycles in $G$
of length at most four are at distance in $G$ of at least $D$, then $G$  is $3$-list-colorable.
\end{theorem}

We   give a different proof of and generalize Dvo\v{r}\'ak's  result to locally planar graphs on surfaces in Theorem~\ref{PrecoloredGirth5}.

\subsection{Exponentially Many Colorings}

\yy{%
It is an easy consequence of the the Four Color Theorem  that every planar graph has exponentially many $5$-colorings.
Birkhoff and Lewis~\cite{BirLew}  obtained an optimal bound, as follows.
\begin{theorem}\label{thm:birlew}
Every planar graph on $n\ge3$ vertices has at least $60\cdot2^{n-3}$ distinct $5$-colorings, and if it has exactly
$60\cdot2^{n-3}$ distinct $5$-colorings, then it is obtained from a triangle by repeatedly inserting  vertices of degree three
inside  facial triangles.
\end{theorem}
\noindent 
Thomassen~\cite{ThomExpQuestion} proved that a similar bound holds for $5$-colorable graphs embedded in a fixed surface.
\begin{theorem}\label{thm:exp5colsurf}
For every surface $\Sigma$ there exists a constant $c>0$ such that
every $5$-colorable graph on $n\ge1$ vertices embedded in $\Sigma$ has at least $c\cdot2^{n}$ distinct $5$-colorings.
\end{theorem}
%
%but with $60$ replaced by a constant depending on the surface.
In~\cite[Theorem~2.1]{ThomExpQuestion} Thomassen gave a short and elegant argument that for every fixed surface $\Sigma$,
if a graph $G$ embedded in $\Sigma$ is $5$-colorable, then it has exponentially many $5$-colorings.
The argument also applies to $4$-colorings of  triangle-free graphs and $3$-colorings of graphs of girth at least five.
In~\cite[Problem~2]{ThomWheels} Thomassen asked whether the bound of Theorem~\ref{thm:birlew} holds for $5$-list-colorings and
}%
proved that a planar graph has exponentially many $5$-list-colorings.

\begin{theorem}\label{ExpPlane}
If $G$ is a planar graph and $L$ is a $5$-list assignment for $G$, then $G$ has at least $2^{|V(G)|/9}$ distinct $L$-colorings.
\end{theorem}

%Thomassen~\cite{ThomExpQuestion, ThomWheels} then conjectured that Theorem~\ref{ExpPlane} may be generalized to other surfaces. Of course not every graph on other surfaces is $5$-list-colorable. Hence, Thomassen conjectured the following.

\yy{%
Thomassen~\cite[Problem~3]{ThomWheels} also asked  whether Theorem~\ref{thm:exp5colsurf} extends to $5$-list-colorings.%
}

\begin{problem}\label{ExpConj}
Let $G$ be a graph embedded in a surface $\Sigma$ and let  $L$ be a $5$-list-assignment for $G$. Is it true that if $G$ is $L$-colorable, 
then $G$ has at least $c2^{|V(G)|}$ distinct $L$-colorings, where $c>0$ is a constant depending only on the Euler  genus of $\Sigma$?
\end{problem}

\yy{%
\noindent 
In Theorem~\ref{ExpSurfacetheorem2} we  prove the weaker statement that the answer is yes if $c2^{|V(G)|}$ is replaced by $2^{c|V(G)|}$.
This result was announced by Kawarabayashi and Mohar~\cite{KM}, but no proof appeared so far.
}

\yy{%
Thomasssen~\cite{Thomany} also proved that planar graphs of girth at least five have exponentially  many $3$-list-colorings.
\begin{theorem}\label{ExpGirth5Plane}
If $G$ is a planar graph of girth at least five and $L$ is a $3$-list assignment for $G$, then $G$ has at least $2^{|V(G)|/10000}$ distinct $L$-colorings.
\end{theorem}
}

\zz{%
Theorem~\ref{ExpSurfacetheorem2} extends this to graphs on surfaces.
}

%A resolution of Conjecture~\ref{ExpConj} was announced by Kawarabayashi and Mohar~\cite{KM}, 
%but no proof appeared so far.
%We give a proof in Theorem~\ref{ExpSurfacetheorem2}.

\junk{
Note that a proof of Conjecture~\ref{ExpConj} was announced without proof by Kawarabayashi and Mohar in ~\cite{KM}. We provide an independent proof of Conjecture~\ref{ExpConj} in section 7. Indeed, we will show that precoloring a subset of the vertices still allows exponentially many $5$-list-colorings where the constant depends only on the genus and the number of precolored vertices. In fact, we show that the dependence on genus and the number of precolored vertices can be removed from the exponent.
}

%\subsection{Graphs of girth at least five}

%%%%%%%%%%%%%%%%%%%%%%%%%%%%%%%%%%%%%%%%%%%%%%%%%%%%%%%%%%%%%%%%%%%%%
%%%%%%%%%%%%%%%%%%%%%%%%%%%%%%%%%%%%%%%%%%%%%%%%%%%%%%%%%%%%%%%%%%%%%

%%%%%%%%%%%%%%%%%%%%%%%%%%%%%%%%%%%%%%%%%%
\section{Main Coloring Results}
\label{sec:mainresults}

In this section we formulate Theorem~\ref{thm:maincol}, our main coloring result,
and derive a number of consequences from it.

%In the study of coloring graphs on surfaces one often encounters problems
%asking whether a precoloring of a subgraph of a graph extends to a 
%coloring of the entire graph.
%For us the subgraph will be such that every component is a cycle or
%a complete graph on at most two vertices. That motivates the following
%definition.

We need to generalize rooted graphs to allow distinguished facial cycles, rather than just distinguished vertices.
Hence the following definition.

\begin{definition}
\label{def:ring}
A \emph{ring} is a cycle or a complete graph on one or two vertices.
A \emph{graph with rings} is a pair $(G,\R)$, where $G$ is a graph and
$\R$ is a set of vertex-disjoint rings in $G$.
We will often say that $G$ is a graph with rings $\R$, and sometimes
we will just say that $G$ is a graph with rings, leaving out the
symbol for the set of rings.
%We will sometimes omit $\R$ and speak of a graph $G$ with rings
%subgraph of $G$ such that every component of $R$ is a ring.
\end{definition}

\begin{definition}
\label{def:embedded}
We say that a graph $G$ with rings $\R$ is \emph{embedded in
a surface $\Sigma$}
if the underlying graph $G$ is embedded in 
$\Sigma$ in such a way that for every ring
$R\in\R$ there exists a component $\Gamma$ of bd$(\Sigma)$ such that
$R$ is embedded in $\Gamma$,  no other vertex or edge of $G$ is
embedded in $\Gamma$, and every component of bd$(\Sigma)$ includes some
ring of $G$.
In those circumstances we say that $(G,\R,\Sigma)$ is an \emph{embedded graph with rings.}
Sometimes we will abbreviate this by saying that $G$ is an \emph{embedded graph with rings.}
We also say that $G$ is a graph with rings $\R$ embedded in a surface $\Sigma$.
A vertex $v\in V(G)$ is called a {\em ring vertex} if it belongs to some ring in $\R$.
\end{definition}

\noindent 
Thus if $G$ is a graph with rings embedded in a surface $\Sigma$, then the rings are in one-to-one correspondence
with the boundary components of $\Sigma$. In particular, if $G$ has no rings, then $\Sigma$ has no boundary.

\begin{definition}
\label{def:can}
By a {\em canvas} we mean a quadruple $(G,\R,\Sigma,L)$, where $(G,\R,\Sigma)$
is an embedded graph with rings and $L$ is a list assignment for $G$
such that the subgraph $\bigcup\R$ is $L$-colorable.
%We extend all graph terminology to canvases in the obvious way, so that, in particular, we may speak of embedded canvases.
\end{definition}

Let us remark that this definition of a canvas is  more general than the one we used
in~\cite{PosThoTwotwo} or~\cite{PosThoLinDisk}.
We need the notion of a critical canvas, which we define as follows.

\begin{definition}
\label{def:critical}
Let $G$ be a graph, let $H$  be a subgraph of G and let $L$ be a list assignment for $G$.
We say that $G$ is {\em $H$-critical with respect to $L$} if $G\ne H$ and for every proper subgraph $G'$
of $G$ that includes $H$ as a subgraph there exists an $L$-coloring of $H$ that extends
to an $L$-coloring of $G'$, but does not extend to an  $L$-coloring of $G$.
\zz{We shall abbreviate $\bigcup\R$-critical by $\R$-critical.}
A canvas $(G,\R,\Sigma,L)$ is {\em critical} if $G$ is $\R$-critical
with respect to $L$.
\end{definition}

%mention in hyperbolicity, strong hyperbolicity

\begin{definition}
\label{def:C345}
For $k=3,4,5$ we define $\C_k$  to be the family of all canvases $(G,\R,\Sigma,L)$
such that $|L(v)|\ge k$ for every $v\in V(G)$ that does not belong to a ring,
and every cycle in $G$ of length at most $7-k$  is \zz{equal to a ring.}
%not null-homotopic.
\end{definition}

%k=3  4
%k=4 3

The above-defined families are the three most interesting families of canvases to which
our theory applies. More generally, the theory applies to what we call good families of canvases.
Those are defined in Definition~\ref{def:goodcan}, but for now we can treat this notion as a black box.
We now state the fact that the families just defined are good. We prove it in Theorem~\ref{thm:Cisgood-2},
using results from other papers.
% we define a good family of canvases. For now we confine ourselves to the following.

\begin{theorem}
\label{thm:Cisgood}
The families $\C_3,\C_4,\C_5$ are good families of canvases.
\end{theorem}

\begin{definition}
If $\Sigma$ is a surface with boundary, let $\widehat{\Sigma}$ denote the 
surface without boundary obtained by
gluing a disk to each component of the boundary.
\end{definition}

The following is our main coloring theorem.
%{\bf is this needed?}

\begin{theorem}
\label{thm:maincol}
For every good family $\C$ of canvases
there exist  $\gamma,a,\epsilon>0$ such that the following holds.
Let $(G,\R,\Sigma,L)\in\C$, 
let $g$ be the Euler genus of $\Sigma$, 
 let $R$ be the total number of ring vertices in $\R$,
%sum of the lengths of the rings in $\R$,
and let $M$  be the maximum number of vertices in a ring in $\R$.
Then $G$ has a subgraph $G'$ such that $G'$ includes all the rings in $\R$,
$G'$ has at most $\gamma(g+R)$ vertices
and for every $L$-coloring $\phi$ of $\bigcup\R$
\begin{itemize}
\item
either $\phi$ does not extend to an $L$-coloring of $G'$, or 
\item 
%$\phi$ extends to an $L$-coloring of $G$,
%and if $\C$ is critically exponentially  hyperbolic, then 
$\phi$ extends to 
at least $2^{\epsilon (|V(G)|-a(g+R))}$ distinct $L$-colorings of $G$.
\end{itemize}
Furthermore, \xx{either} 
\begin{itemize}
\item[{\rm(a)}]
there exist distinct rings $C_1,C_2\in\R$ such that the distance in $G'$ between them is
at most \xx{$\gamma( |V(C_1)|+ |V(C_2)|)$}, or
\end{itemize} 
\xx{every component $G''$ of $G'$} satisfies  one of the following conditions: 
\begin{itemize}
\item [{\rm(b)}]
the graph $G''$ has a cycle $C$ that is not null-homotopic in $\widehat\Sigma$;
 if \xx{$G''$ includes no ring vertex}, then the length of $C$ is at most $\gamma(\log g+1)$, 
and otherwise  it is at most $\gamma (g+M)$, or
\item[{\rm(c)}]
$G''$ includes precisely one member $C$ of $\R$,
there exists a disk $\Delta\subseteq\widehat\Sigma$ that includes $G''$,
and every vertex of $G''$ is at distance at most $\xx{\gamma}\log|V(C)|$ from   $C$.
%every component of $G'$ includes precisely one member of $\R$,
% for every $C\in\R$
%there exists a disk $\Delta\subseteq\widehat\Sigma$ that includes the component of $G'$ containing $C$,
%and every vertex of that component is at distance at most $\gamma\log|V(C)|$ from   $C$.
\end{itemize} 
%Furthermore, $G'$ satisfies  one of the following conditions: 
%\begin{itemize}
%\item[{\rm(a)}]
%there exist distinct rings $C_1,C_2\in\R$ such that the distance in $G'$ between them is
%at most $\gamma(\log |V(C_1)|+\log |V(C_2)|+1)$, or
%\item [{\rm(b)}]
%the graph $G'$ has a cycle $C$ that is not null-homotopic in $\widehat\Sigma$;
% if $\R=\emptyset$, then the length of $C$ is at most $\gamma(\log g+1)$, 
%and otherwise  it is at most $\gamma (g+M)$,
%or
%\item[{\rm(c)}]
%every component of $G'$ includes precisely one member of $\R$,
% for every $C\in\R$
%there exists a disk $\Delta\subseteq\widehat\Sigma$ that includes the component of $G'$ containing $C$,
%and every vertex of that component is at distance at most $\gamma\log|V(C)|$ from   $C$.
%\end{itemize} 
\end{theorem}

\noindent 
Theorem~\ref{thm:maincol} is an immediate consequence of Theorem~\ref{thm:maincan2},
proved in Section~\ref{sec:can}, where it is deduced from Theorem~\ref{thm:free5}.%
\REM{Deleted: 
We also prove the closely related Theorem~\ref{thm:maincanvar2}, where the bound
in outcome (b) is improved at the expense of a worse bound in outcome (a).}
In the rest of this section we derive consequences of Theorem~\ref{thm:maincol},
some of which are new and some of which improve previous results.
As a first consequence
we improve the bound on $\gamma$ in Theorems~\ref{ThomEdgeWidth} and~\ref{DeVosWidth}, and
extend Theorem~\ref{thm:fiskmohar} to list-coloring, while simultaneously improving upon
the first and  third outcome.

\begin{theorem}\label{EdgeWidth}
There exists an absolute constant $\gamma$ such that if
$G$ is a graph embedded in a surface $\Sigma$ of Euler genus $g$ in such a way such that every non-null-homotopic cycle in $G$ 
has length exceeding $\gamma(\log g+1)$, then $G$ is $L$-colorable for every type $345$
list assignment $L$.
\end{theorem}

\begin{proof}
By Theorem~\ref{thm:Cisgood} the family $\C:=\C_3\cup\C_4\cup\C_5$ is a good family of canvases.
Let $\gamma$ be as in Theorem~\ref{thm:maincol} applied to the family $\C$, 
 let $G,\Sigma$ and $L$ be as stated, and let $\R=\emptyset$.
Then $(G,\R,\Sigma,L)\in\C$. Let $G'$ be a subgraph of $G$ whose existence is guaranteed
by Theorem~\ref{thm:maincol}. Since $\R=\emptyset$ and every non-null-homotopic cycle in $G$ 
has length exceeding $\gamma(\log g+1)$, the only way $G'$ can satisfy one of the conditions (a)--(c)
is that $G'$ is the null graph. It follows that $G$ has an $L$-coloring, as desired.
%
%By Theorem~\ref{thm:main5list} applied to $G$ and $\R_0=\emptyset$ there exists a subgraph $G'$ of $G$ 
%such that every component of $G'$ satisfies one of (a)--(d) of Theorem~\ref{thm:main5list},
%and either $G'$ is not $L$-colorable or $G$ is $L$-colorable.
%But no component satisfies (a), (c), or (d), because $\R_0=\emptyset$, and none satisfies (b) by hypothesis.
%Thus $G'$ is the null graph, and hence $G$ is $L$-colorable, as desired.
\end{proof}

\noindent
In fact, the proof shows the following strengthening.

\begin{theorem}\label{EdgeWidthExp}
There exist  absolute constants $\gamma,\epsilon,\alpha>0$ such that if
$G$ is a graph embedded in a surface $\Sigma$ of Euler genus $g$ in such a way such that every non-null-homotopic cycle in $G$ 
has length exceeding $\gamma(\log g+1)$ and $L$ is a type $345$ list assignment for $G$, then $G$ has at least 
$2^{\epsilon (|V(G)|-\alpha g)}$ distinct $L$-colorings.
\end{theorem}

\noindent
The bound $\gamma(\log g+1)$ in Theorems~\ref{EdgeWidth} and~\ref{EdgeWidthExp} is asymptotically best possible, because 
by~\cite[Theorem~4.1]{BolExtremal} there exist non-$5$-colorable
graphs on $n$ vertices with girth $\Omega(\log n )$,
and the Euler genus of a graph on $n$ vertices is obviously at most $n^2$.

The next consequence of Theorem~\ref{thm:maincol} settles Conjecture~\ref{FiniteCrit},
improves the bound on the size of $6$-critical graphs in Theorem~\ref{ThomCrit}
and extends Theorem~\ref{LinearGirth5} to list-critical graphs.

\begin{theorem}\label{LinearListSurface}
There exists an absolute constant $\gamma$ such that if
$G$ is a graph embedded in a surface $\Sigma$ 
of Euler genus $g$ and there exists a type $345$-list assignment $L$ for $G$
such that $G$ is $L$-critical, then $|V(G)|\le \gamma g$. 
\end{theorem}

\begin{proof}
By Theorem~\ref{thm:Cisgood} the family $\C:=\C_3\cup\C_4\cup\C_5$ is a good family of canvases.
Let $\gamma$ be as in Theorem~\ref{thm:maincol} applied to the family $\C$, 
let $G,\Sigma$ and $L$ be as stated, and let $\R=\emptyset$.
Then $(G,\R,\Sigma,L)\in\C$. Let $G'$ be a subgraph of $G$, whose existence is guaranteed
by Theorem~\ref{thm:maincol}. 
Then $|V(G')|\le \gamma g$,
%
%Since $\R=\emptyset$ and every non-null-homotopic cycle in $G$ 
%has length exceeding $\gamma(\log g+1)$, the only way $G'$ can satisfy one of the conditions (a)--(c)
%is that $G'$ is the null graph. It follows that $G$ has an $L$-coloring, as desired.
%
%By Theorem~\ref{thm:main5list} applied to $G$ and $\R_0=\emptyset$ there exists a subgraph $G'$ of $G$ 
%such that 
%%every component of $G'$ satisfies one of (a)--(d) of Theorem~\ref{thm:main5list},
% $|V(G)|\le \gamma g$,
and either $G'$ is not $L$-colorable or $G$ is $L$-colorable.
Since $G$ is $L$-critical it follows that $G'=G$, and hence the theorem holds.
\end{proof}

\yy{%
\noindent
The bound in Theorem~\ref{LinearListSurface} is asymptotically best possible.
For $5$-list-assignments  it can be seen by considering the graphs obtained from copies of $K_6$  by applying
Hajos' construction. For $4$- and $3$-list-assignments we replace $K_6$ by a $5$-critical graph of girth four and
a  $4$-critical graph of girth five, respectively.
}

We have the following immediate corollary.
If $k\ge1$ is an integer, then we say that  a graph $G$ is \emph{$k$-list-critical} if $G$ is not $(k-1)$-list-colorable but every 
proper subgraph of $G$ is.

\begin{corollary}\label{LinearListSurface2}
There exists an absolute constant $c$ such that if
$k\in\{3,4,5\}$ and
$G$ is a $(k+1)$-list-critical graph of girth at least $\xx{8}-k$ embedded in a surface of Euler genus $g$, then $|V(G)|\le cg$. 
\end{corollary}

\begin{proof}
%{\bf REVISE}
Let $G$ be a $(k+1)$-list-critical graph. Then $G$ is not $L$-colorable for some $k$-list-assignment $L$,
and hence $G$ has an $L$-critical subgraph $G'$. Thus $G'$ is not $L$-colorable, and hence not
$k$-list-colorable. The $(k+1)$-list-criticality of $G$ implies that $G=G'$.
Thus we have shown that  $G$ is $L$-critical, and the corollary follows from Theorem~\ref{LinearListSurface}.
\end{proof}

By the result of Eppstein~\cite{Eppstein2} mentioned earlier we have the following algorithmic consequence.
%An immediate corollary of Theorem~\ref{LinearListSurface} is that we are now able to decide $5$-list-colorablity on a fixed surface in linear-time:

\begin{corollary}
For every surface $\Sigma$  and every $k\in\{3,4,5\}$ there exists a linear-time algorithm to decide whether an input graph of girth at least $\xx{8}-k$ embedded in  $\Sigma$ is 
$k$-list-colorable.
\end{corollary}

We can also deduce the following algorithm for $L$-coloring.

\begin{corollary}
For every surface $\Sigma$ there exists a linear-time algorithm to decide whether 
given an input graph $G$
 embedded in  $\Sigma$ 
and a type $345$ list assignment $L$ for $G$ there exists an $L$-coloring  of $G$. 
\end{corollary}

\begin{proof}
%{\bf REVISE}
For simplicity we present the proof for $5$-list assignments, but it clearly extends to the other two types of list assignment.
By Theorem~\ref{LinearListSurface} there exists a finite list $\L$ of pairs $(G',L')$, where $G'$ is a graph embedded in $\Sigma$
and $L'$ is a $5$-list assignment for $G'$, such that an input graph $G$ embedded in $\Sigma$ is $L$-colorable if and only if
there is no $(G',L')\in\L$ such that $(G',L')\preceq(G,L)$,
where $(G',L')\preceq(G,L)$ means that there exists a subgraph isomorphism $f:V(G')\to V(G)$ and a mapping
$g:\bigcup_{v\in V(G')} L'(v)\to \bigcup_{v\in V(G)} L(v)$ such that $g(L'(v))=L(f(v))$ for every $v\in V(G')$.
We may actually assume without loss of generality that for all the pairs $G'',L''$ under consideration and every element $x$,
the set of vertices $\{v\in V(G''): x\in L''(v)\}$ induces a connected subgraph of $G''$. 

The corollary now follows from the fact that, under the assumption we just made, 
for fixed $G',L'$ it can be tested in linear time whether $(G',L')\preceq(G,L)$.
This can be done by  using  Eppstein's algorithm~\cite{Eppstein2}, which works in this more general setting, even though it
is not stated that way, or it can be deduced more directly from some of the generalizations of   Eppstein's algorithm to relational structures,
such as~\cite{DvoKraThofol}.
\end{proof}

Let $G$ be a graph embedded in a surface $\Sigma$, and let $\Sigma'$ be the surface obtained
by attaching a disk to every facial walk of the embedded graph  $G$.
Then $G$ is $2$-cell embedded in $\Sigma'$, and we say that the Euler genus of $\Sigma'$
is the {\em combinatorial genus} of the embedded graph $G$. 
\qq{%
We need the following well-known result. We include a proof for convenience.
\begin{lemma}\label{lem:genusadd}
Let $G$ be a graph embedded in a surface of  Euler genus $g$,  let 
 $G_1,G_2,\ldots,G_n$ be pairwise disjoint subgraphs of $G$, and for $i=1,2,\ldots,n$ let $g_i$ be 
the combinatorial genus of $G_i$. Then $ \sum_{i=1}^n  g_i\le g$.
\end{lemma}
\begin{proof}
We actually prove the stronger statement that if $G=G_1\cup G_2\cup\cdots\cup G_n$ and $g$ is 
the combinatorial genus of $G$, then $ \sum_{i=1}^n  g_i= g$.
It suffices to prove this statement for $n=2$ and we may assume that both $G_1$ and $G_2$ are connected.
There exists a simple closed curve $\cc{\xi}:{\mathbb S}^1\to\Sigma$ whose image is disjoint from $G$,
and either $\cc{\xi}$ is non-separating, or it separates $G_1$ from $G_2$.
But $\cc{\xi}$ cannot be non-separating, given that $g$ is the combinatorial genus of $G$;
thus it separates $G_1$ from $G_2$ and the lemma follows.
\end{proof}
}

Another consequence of Theorem~\ref{LinearListSurface} is the following, which gives an independent proof 
and extension of Theorem~\ref{thm:linext5choose} (but with a worse constant).

\begin{theorem}
\label{thm:linext}
There exists an absolute constant $\gamma$ such that
for every graph $G$ embedded in a surface of  Euler genus $g$
and every type $345$ list assignment $L$ for $G$ there exists a set
$X\subseteq V(G)$ of size at most $\gamma g$ such that $G\setminus X$ is $L$-colorable.
\end{theorem}

\begin{proof}
Let $\gamma$ be as in Theorem~\ref{LinearListSurface}. We claim that $\gamma$ 
satisfies the conclusion of the theorem.
Let $G$  be embedded in a surface $\Sigma$ of  Euler genus $g$, and let  $L$
be a type $345$ list assignment for $G$.
Let $G_1,G_2,\ldots,G_n$ be a maximal family of pairwise disjoint $L$-critical subgraphs of $G$,
and let $X:=V(G_1)\cup V(G_2)\cup\cdots\cup V(G_n)$.
Then $G\setminus X$ is $L$-colorable by the maximality of the family.
For $i=1,2,\ldots,n$ let $g_i$ be the Euler genus of $G_i$. By Theorem~\ref{LinearListSurface}
\qq{and Lemma~\ref{lem:genusadd}}
$$|X|=\sum_{i=1}^n |V(G_i)|\le \sum_{i=1}^n \gamma g_i\le\gamma g,$$
as desired.
\end{proof}

The proof of  Theorem~\ref{LinearListSurface} actually implies the following stronger form, which also generalizes
Theorem~\ref{Thom7} to $5$-list-coloring (albeit with $150$ replaced by a larger constant).
 
\begin{theorem}\label{LinearListSurface3}
There exists an absolute constant $\cc{\gamma}$ with the following property. 
Let $G$ be a  graph \cc{without rings} embedded in a surface $\Sigma$ of Euler genus $g$, let $S\subseteq V(G)$ and $L$ 
be a type $345$ list assignment for $G$. Then there exists a subgraph $H$ of $G$ such that $S\subseteq V(H)$, 
$|V(H)|\le\cc{\gamma}(|S|+g)$ and every $L$-coloring  of $S$ either
\begin{enumerate}
\item  does not extend to an $L$-coloring of $H$, or
\item  extends to an $L$-coloring of $G$. 
\end{enumerate}
\end{theorem}

\cc{%
\begin{proof}
By Theorem~\ref{thm:Cisgood} the family $\C:=\C_3\cup\C_4\cup\C_5$ is a good family of canvases.
Let $\gamma$ be as in Theorem~\ref{thm:maincol} applied to the family $\C$, 
let $G,\Sigma$ and $L$ be as stated,  let $\R$ consist of $S$ as isolated vertices, and 
let $\Sigma'$ be obtained from $\Sigma$ by deleting, for every $v\in S$, an open disk disjoint from $G$ whose 
closure intersects $G$ in $v$.
Then $(G,\R,\Sigma',L)\in\C$. Let $G'$ be a subgraph of $G$, whose existence is guaranteed
by Theorem~\ref{thm:maincol}. 
Then $|V(G')|\le \gamma (g+|S|)$,
%
%Since $\R=\emptyset$ and every non-null-homotopic cycle in $G$ 
%has length exceeding $\gamma(\log g+1)$, the only way $G'$ can satisfy one of the conditions (a)--(c)
%is that $G'$ is the null graph. It follows that $G$ has an $L$-coloring, as desired.
%
%By Theorem~\ref{thm:main5list} applied to $G$ and $\R_0=\emptyset$ there exists a subgraph $G'$ of $G$ 
%such that 
%%every component of $G'$ satisfies one of (a)--(d) of Theorem~\ref{thm:main5list},
% $|V(G)|\le \gamma g$,
and every $L$-coloring of $S$ either does not extend to an $L$-coloring of $G'$, or
  extends to an $L$-coloring of $G$,
% $G'$ is not $L$-colorable or $G$ is $L$-colorable, 
as desired.
%Since $G$ is $L$-critical it follows that $G'=G$, and hence the theorem holds.
\end{proof}
}

%\bigskip\hrule\bigskip{\bf Remainder of section to be revised}

%For a proof of the next result we need the following, which follows from~\cite[Theorem~2]{ThomWheels}.
%
%\begin{theorem}
%\label{thm:extend4cycle}
%If $G$ is a planar graph and $L$ is a $5$-list assignment for $G$,
%then every $L$-coloring of a cycle of $G$ of length at most four extends to an
%$L$-coloring of $G$.
%\end{theorem}

The next theorem implies the former Albertson's conjecture (Theorem~\ref{Albertson}) and generalizes it to other surfaces,
it generalizes Theorem~\ref{PrecoloringRegular} to list coloring, it answers Question~\ref{WhichDist},
and implies \yy{another  result} which we will discuss next.
\cc{Let us remark that the difference between $G\left[V\left(\bigcup\R\right)\right]$ and $\R$ is that the former 
graph includes edges of $G$ with both ends in $\R$, including those that do not belong to $\R$.}

%{\bf To be replaced by a labeled graph version in Section 9.}

\begin{theorem}\label{Precolored}
There exist absolute constants $c, D$ such that the following holds.
Let $(G,\R,\Sigma,L)\in\C_3\cup\C_4\cup\C_5$ and
%,  be a graph with rings $\R$ embedded in a surface $\Sigma$ of Euler genus $g$ in such a way that 
assume that every cycle in $G$ that is non-null-homotopic in $\widehat\Sigma$
 has length at least 
\zz{$cg$, where $g$ is the Euler genus of $\Sigma$.}
%$c g(\Sigma)$.
% $L$ be a type $345$-list-assignment for $G$. 
If each ring in $\R$ has at most four vertices and
%If $X\subseteq V(G)$ is 
 every two distinct members of $\R$ are at distance in $G$ of at least $D$,
then every $L$-coloring of $G\left[V\left(\bigcup\R\right)\right]$ extends to an $L$-coloring of $G$.
\end{theorem}

\begin{proof}
By Theorem~\ref{thm:Cisgood} the family $\C:=\C_3\cup\C_4\cup\C_5$ is a good family of canvases.
Let $\gamma$ be as in Theorem~\ref{thm:maincol} applied to the family $\C$,
%Let $\gamma$ be as in Theorem~\ref{thm:maincol}, 
let $D>\xx{8\gamma}$,
and let $c>5\gamma$.
%{\bf EDIT}
We claim that $D$ and $c$ satisfy the conclusion of the theorem.
To prove that let   $(G,\R,\Sigma,L)$ be as stated.
By  Theorem~\ref{thm:maincol} there exists a subgraph $G'$ of $G$ 
%on at most $\gamma(g(\Sigma)+4)$ vertices
%gam(g+4)<cg
such that $G'$ includes all the rings in $\R$
and for every $L$-coloring $\phi$ of $\bigcup\R$,
either $\phi$ does not extend to an $L$-coloring of $G'$, or $\phi$ extends to
an $L$-coloring of $G$;
and  $G'$ satisfies one of (a)--(c) of  Theorem~\ref{thm:maincol}.
But $G'$ does not satisfy (a) or (b) by hypothesis, and hence it satisfies (c).
% because every non-null-homotopic cycle
%in $G$ has length at least $\gamma g$.
It follows from Theorem~\ref{thm:extend4cycle} that
every $L$-coloring of $G\left[V\left(\bigcup\R\right)\right]$ extends to an $L$-coloring of $G'$.
Consequently every $L$-coloring of $G\left[V\left(\bigcup\R\right)\right]$ extends to an $L$-coloring of $G$, as desired.
\end{proof}

\REM{Deleted: 
By applying Theorem~\ref{thm:maincanvar2} rather than Theorem~\ref{thm:maincol} 
we obtain the following  related result.}

%\begin{theorem}\label{Precoloredvar}
%There exist absolute constants $\gamma, D$ such that the following holds.
%Let $(G,\R,\Sigma,L)\in\C_3\cup\C_4\cup\C_5$, let $g$ be the Euler genus of $\Sigma$, and
%%,  be a graph with rings $\R$ embedded in a surface $\Sigma$ of Euler genus $g$ in such a way that 
%assume that every cycle in $G$ that is non-null-homotopic in $\widehat\Sigma$
% has length at least $\gamma (\log g+1)$.
%% $L$ be a type $345$-list-assignment for $G$. 
%If each ring in $\R$ has at most four vertices and
%%If $X\subseteq V(G)$ is 
% every two distinct members of $\R$ are at distance in $G$ of at least $\gamma\log (g+2)$,
%then every $L$-coloring of $G\left[V\left(\bigcup\R\right)\right]$ extends to an $L$-coloring of $G$.
%\end{theorem}

%, stated as Theorem~\ref{Precolored2},
%where the bound on the length of  non-null-homotopic cycles
%is reduced to $\Omega(\log g+1)$, at the expense of increasing the distance between members of $X$ to $\Omega(\log(g+2))$.
%The hypothesis that every cycle in $\R$ have length at most four may be replaced by assuming that
%for every $C\in\R$, the $L$-coloring of $\bigcup\R$ extends from $C$ to a planar neighborhood of $C$
%of radius $c\log|V(C)|$.

The following theorem follows immediately from Theorem~\ref{Precolored}.
The first assertion gives an independent proof of and generalizes Theorem~\ref{thm:dvorak}.
%the distance requirement is extended to cycles of length at most four.
%Theorem~\ref{PrecoloredGirth5} generalizes and gives a different proof of Dvo\v{r}\'ak's theorem.

\begin{theorem}\label{PrecoloredGirth5}
There exist absolute constants $c, D$ such that the following holds.
Let $G$ be a graph \aa{without  rings} embedded in a surface $\Sigma$ \aa{(without boundary)} 
of Euler genus $g$ in such a way that every non-null-homotopic cycle 
in $G$ has length at least $cg$. Then
\begin{itemize}
\item if every two cycles in $G$ of length at most four are at distance at least $D$ in $G$, then $G$ is $3$-list-colorable, and
\item if every two triangles in $G$ are at distance at least $D$ in $G$, then $G$ is $4$-list-colorable.
\end{itemize}
%If $L$ be a $3$-list-assignment for $G$. If $X\subset V(G)$ is such that 
%every two distinct members of $X$ are at distance in $G$ of at least $D$,
%then every $L$-coloring of $X$ extends to an $L$-coloring of $G$.
\end{theorem}

We obtain the following generalization and independent proof  of Theorem~\ref{CrossingPlane}.
\yy{%
For $5$-list-assignments it follows from Theorem~\ref{Precolored} by replacing each crossing by a cycle of length four,
but for the other two cases we need a more careful argument.
}

\begin{theorem}\label{CrossingSurface}
There exist absolute constants $c$ and $D$ such that the following holds.
Let $G$ be a graph  \aa{without  rings} drawn with crossings in a surface $\Sigma$ \aa{(without boundary)} 
of Euler genus $g$,
and let $L$ be a type $345$ list assignment for $G$.
\xx{Assume that every edge crosses at most one other edge.}
% and $L$ be a $5$-list-assignment for $G$. 
Let $G'$ be the graph embedded in $\Sigma$ obtained from $G$ by repeatedly replacing a pair of crossing edges $e,f$ by a degree four vertex adjacent
to the ends of $e$ and~$f$. 
If every non-null-homotopic cycle in $G'$ has length at least $cg$ and every pair of the new vertices of $G'$ are at distance
at least $D$ in $G'$, then $G$ is $L$-colorable.
\end{theorem}

\begin{proof}
\yy{%
By Theorem~\ref{thm:Cisgood} the family $\C:=\C_3\cup\C_4\cup\C_5$ is a good family of canvases.
Let $\gamma$ and $a$ be as in Theorem~\ref{thm:maincol}, when the latter is applied to the family $\C$.
Let $D:=\xx{32\gamma+3}$ and $c:=17\gamma$.
We will show  that $D$ and $c$ satisfy the conclusion of the theorem.
%$cg>\gamma(g+16)$
}

\yy{%
We construct a graph $H$ with rings embedded in a surface $\Sigma'$ as follows.
For every pair of crossing edges $u_1v_1$ and $u_2v_2$ we do the following. First of all, we may assume that the vertices $u_1,v_1,u_2,v_2$
are pairwise distinct, for otherwise we could eliminate the crossing by redrawing one of the crossing edges.
We add $12$ new vertices and $16$  new edges in such a way that $u_1,v_1,u_2,v_2$ and the new vertices will form a facial cycle $C$,
the vertices $u_1,u_2,v_1,v_2$ will appear on $C$ in the order listed, and will be at distance at least four apart on $C$.
We remove the face  bounded by $C$ from the surface and we declare $C$ to be a ring.
By repeating this construction for every pair of crossing edges we arrive at a graph $H$ with rings $\R$ embedded in a surface $\Sigma'$.
For $v\in V(G)$ let $L'(v)=L(v)$, and for $v\in V(H)-V(G)$ let $L'(v)$ be an arbitrary set of size five.
It follows that  $(H,\R,\Sigma',L')\in\C$.
}

\yy{%
By Theorem~\ref{thm:maincol} there exists a subgraph $H'$ of $H$ that includes all the rings in $\R$ such that $H'$ satisfies one of
the conditions (a)--(c) of Theorem~\ref{thm:maincol}, and every $L'$-coloring of $\bigcup\R$ that extends to an $L'$-coloring of $H'$
also extends to an $L'$-coloring of $H$.
The choice of $c$ and $D$ implies that $H'$ does not satisfy (a) or (b), and hence it satisfies (c).
}

\yy{%
We now construct an $L'$-coloring $\phi$ of $\bigcup\R$. Let $C\in\R$. Let $H''$ be the unique component of $H'$ containing $C$.
By (c) it includes no other ring. Let $G''$ be the corresponding subgraph of $G$; that is, $G''$ is obtained from $H''$ by deleting
the new vertices and adding the two crossing edges with ends in $V(H'')$. By Theorem~\ref{CrossOneChoosable}  the graph $G''$
has an $L$-coloring. Let the restriction of $\phi$ to $C$ be obtained by taking the restriction of one such $L$-coloring to the four ends  of the crossing edges,
and extending it to $C$  arbitrarily. This completes the construction of the $L'$-coloring $\phi$. It follows from the construction
that $\phi$ extends to an $L'$-coloring of $H'$, and hence it extends to an $L'$-coloring of $H$, which by construction is also an
$L$-coloring of $G$, as desired.
}
\end{proof}
\REM{Deleted: 
Similarly,
\yy{by applying Theorem~\ref{thm:maincanvar2} rather than Theorem~\ref{thm:maincol}, one can obtain}
a version of the above theorem where the length of non-null-homotopic cycles is at least  $\Omega(\log g+1)$
and the new vertices are pairwise at distance at least $\Omega(\log(g+2))$.
\yy{We omit the details.}}

Finally we 
\yy{%
prove a weaker version of Problem~\ref{ExpConj}.%
}
% in the affirmative.
It is implied by the following more general theorem by setting $\R=\emptyset$. 

%\begin{theorem}\label{ExpSurfacetheorem}
%For every surface $\Sigma$ there exists a constant $c>0$ such that following holds: Let $G$ be a graph embedded in $\Sigma$ and $L$ a $5$-list-assignment for $G$. If $G$ has an $L$-coloring, then $G$ has at least $2^{c|V(G)|}$ distinct $L$-colorings.
%\end{theorem}

%Indeed, we prove a stronger version of Theorem~\ref{ExpSurfacetheorem} about extending a precoloring of a subset of the vertices.

\begin{theorem}
\label{ExpSurfacetheorem2}
There exist constants $\epsilon,a>0$ such that the following holds. 
Let $(G,\R,\Sigma,L)\in\C_3\cup\C_4\cup\C_5$, let $g$ be the Euler genus of $\Sigma$, and
let $R$ be the total number of ring vertices.
%length of the rings and in $\R$.
%Let $G$ be a graph embedded in a surface $\Sigma$ of Euler genus $g$, 
%let $\R$ be a set of facial cycles of $G$ of total length $R$,
%and let $L$ be a $5$-list-assignment for $G$. 
 If $\phi$ is an $L$-coloring of $\bigcup\R$ such that $\phi$ extends to an $L$-coloring of $G$, 
then $\phi$ extends to at least $2^{\epsilon(|V(G)|-a(g+R))}$ distinct $L$-colorings of $G$.
\end{theorem}

\begin{proof}
%This follows in the same way as Theorem~\ref{LinearListSurface}.
By Theorem~\ref{thm:Cisgood} the family $\C:=\cc{\C_3\cup\C_4\cup\C_5}$ is a good family of canvases.
Let $\epsilon,a$ be as in Theorem~\ref{thm:maincol} applied to the family $\C$, 
let $(G,\R,\Sigma,L)$ be as stated, and let $G'$ be a subgraph of $G$ as in Theorem~\ref{thm:maincol}.
It follows that if  an $L$-coloring $\phi$ of $\bigcup\R$  extends to an $L$-coloring of $G$, 
then $\phi$ extends to at least $2^{\epsilon(|V(G)|-a(g+R))}$ distinct $L$-colorings of $G$.
\end{proof}

%\input outline.tex
%%%%%%%%%%%%%%%%%%%%%%%%%%%%%%%%%%%%%%%%%%%%%%%%%%%%%%%%%%%%%%%%%%%%%%%%%%%%%%%%%%%%%%%%%%%%%%%%%%
%\input sec4.tex

%%%%%%%%%%%%%%%%%%%%%%%%%%%%%%%%%%%%%%%%%%%%%%%%%%%%%%%%%%%%%%%%
\section{Exponentially Many Colorings}\label{sec:expmany}

The main result of this short section is Lemma~\ref{ExpManyDisc}, which strengthens Theorem~\ref{LinearCycle0}
by saying that if \aa{an $L$-coloring of  $C$} extends to $G$, then it has exponentially many extensions.

%Thomassen~\cite[Theorem~4]{ThomWheels} proved that planar graphs have exponentially many 
%$5$-list-colorings from a given $5$-list-assignment. Indeed, 

\aa{%
When  Thomassen proved Theorem~\ref{ExpPlane} in~\cite[Theorem~4]{ThomWheels},
}%
he proved a stronger statement, 
which we state in  a slightly stronger form.

\begin{theorem}\label{ThomExp}%[Theorem 4 in ~\cite{ThomWheels}]
Let $G$ be a  plane graph, let $Z$ be the set of all vertices of $G$ that are incident with the outer face,
let $P$ be a subpath of $G$ of length one or two with every vertex and edge incident with the outer face,
and let $L$ be a list assignment for $G$ such that $|L(v)|\ge 3$ for every $v\in Z-V(P)$ and
$|L(v)|\ge5$ for every $v\in V(G)-Z$.
Let $r$ be the number of vertices $v\in Z$ such that $|L(v)|=3$.
If $G$ has an $L$-coloring,
then it has at least $2^{|V(G)-V(P)|/9-r/3}$ distinct $L$-colorings, unless $|V(P)|=3$ and there exists 
$v\in V(G)-V(P)$ such that $|L(v)|=4$ and $v$ is adjacent to all the vertices of $P$.
\end{theorem}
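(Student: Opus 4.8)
The plan is to prove this by induction on $|V(G)|$, following Thomassen's original approach to the five-list-colorability of planar graphs but carefully tracking the number of colorings at each step. Write $P$ with endpoints $p_1$ and (if $|V(P)|=3$) $p_2$, and let $p$ be the interior vertex of $P$ (so $p=p_1$ if $|V(P)|=2$); let $n=|V(G)-V(P)|$. The base cases are small graphs, handled directly. For the inductive step, I would distinguish the standard cases. If $G$ has a chord of the outer cycle, or more generally a separating triangle or a vertex cutset of size one or two contained in $Z$, split $G$ into the two pieces sharing that small cutset, apply induction to each piece (choosing which piece carries the precolored path $P$, and recoloring the cutset vertices of the other piece as precolored vertices of length-$3$ restricted lists), and multiply the resulting lower bounds together, being careful that the exponents add. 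If the outer cycle has a chord this is the cleanest instance; the exponent $1/9$ and the correction $-r/3$ are chosen precisely so that these splits are superadditive.

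In the main case, $G$ is $2$-connected with no chords, and we look at the neighbors of the vertex $v_1$ that is the neighbor of $p_1$ on the outer cycle other than the next vertex of $P$. Following Thomassen, let $v_1$ have neighbors $p_1, u_1, u_2, \dots, u_m, w$ in order, where $w$ lies on the outer cycle. Choose two colors $c_1, c_2 \in L(v_1)$ different from the color we must eventually be free to give $p_1$'s neighbor constraints, and for $j\in\{1,2\}$ delete $v_1$, remove $c_j$ from the lists of $u_1,\dots,u_m$ and from $L(w)$, and apply induction to $G-v_1$ with the path $P$ extended or replaced appropriately so that $w$ becomes part of the precolored structure or retains a list of size at least $3$. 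Each choice of $j$ yields, by induction, at least $2^{(n-1)/9 - r'/3}$ colorings of $G-v_1$, and these extend to $G$ by coloring $v_1$ with $c_j$; since the two families (for $j=1$ and $j=2$) are disjoint, and since $v_1$ is one vertex contributing at most a bounded increase to $r$, a short computation shows the total is at least $2^{n/9 - r/3}$. The exceptional case in the statement, where $|V(P)|=3$ and some $v$ of list-size $4$ is adjacent to all of $P$, is exactly the configuration where this doubling argument breaks down (the graph may have a unique $L$-coloring, e.g. a suitably listed $K_4$-like gadget), so it must be excluded.

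The main obstacle, as usual with this type of argument, is the bookkeeping: one must verify in every case that the list-size hypotheses are preserved under the reductions (in particular that no interior vertex ever drops below list-size $5$ and that vertices newly placed on the outer face have list-size at least $3$), that the parameter $r$ counting size-$3$ vertices is controlled so that the $-r/3$ term does not swamp the gain, and that the various exceptional sub-configurations (a chord incident with $p_1$, the neighbor $w$ coinciding with a vertex of $P$, $v_1$ having no interior neighbors $u_j$, etc.) either reduce to smaller cases or land in the stated exception. The choice of the constant $9$ in the exponent is dictated by the worst of these cases — roughly, a reduction that removes one vertex but can force up to two new size-$3$ vertices onto the boundary — and checking that $9$ actually suffices uniformly is the crux of the verification. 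Once the case analysis and these inequalities are in place, the theorem follows, and Lemma~\ref{ExpManyDisc} will be obtained by combining Theorem~\ref{ThomExp} with the cutting arguments used for Theorem~\ref{LinearCycle0}.
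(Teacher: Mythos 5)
The paper proves this theorem in a few lines by citing \cite[Theorem~4]{ThomWheels}, where Thomassen already established exactly this statement under the additional hypothesis that $G$ is a near-triangulation; the remaining work is just to reduce the general case to that one. If the outer face of $G$ is bounded by a cycle, one triangulates the interior by adding edges, which can only decrease the number of $L$-colorings, and then applies Thomassen's theorem directly. Otherwise $G$ has a cut vertex or is disconnected, so one writes $G=G_1\cup G_2$ with $|V(G_1\cap G_2)|\le 1$, applies induction to each piece, and multiplies the resulting counts, the exponents being additive. That is the entire proof.

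Your proposal instead sets out to re-derive Thomassen's theorem from scratch via the standard end-of-path vertex-deletion induction. That is a legitimate route in principle (it is, after all, how the underlying theorem was originally proved), but it is far heavier than the situation calls for, and as written it is a plan rather than a proof. The phrases ``a short computation shows the total is at least $2^{n/9-r/3}$'' and ``checking that $9$ actually suffices uniformly is the crux of the verification'' mark exactly where the substance of Thomassen's argument lies, and you leave them unverified; the parameter $r$ really can grow during the reductions, and establishing that the gain of one deleted vertex per step outweighs the $-r/3$ penalty in every branch is the whole difficulty. Similarly, the exceptional configuration ($|V(P)|=3$ with a vertex of list size $4$ adjacent to all of $P$) is not merely ``where the doubling breaks down''; one must show it is the unique obstruction that propagates through the induction, and your sketch does not attempt this. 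The missing idea that would give you a short, complete proof is the reduction to the already-known near-triangulation case; failing that, you would have to actually carry out the case analysis you outline.
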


\begin{proof}
This is proved in~\cite[Theorem~4]{ThomWheels} under the additional assumption that $G$ is a near-triangulation.
If the outer face of G is bounded by a cycle, then the theorem follows 
from~\cite[Theorem~4]{ThomWheels}  by adding edges.
Otherwise $G$ can be written as  $G=G_1\cup G_2$ , where  $|V(G_1\cap G_2)|\le1$,
and the theorem follows by induction applied to  $G_1$ and $G_2$.
\end{proof}

We use Theorem~\ref{ThomExp} to deduce the following corollary.

\begin{corollary}\label{ExpFourCycle}
Let $G$ be a plane graph with outer cycle $C$ of length at most four,
let $L$ be a $5$-list assignment for $G$,
%%Let $T=(G,C,L)$ be a canvas such that $|V(C)|\le 4$, 
let $\phi$ be an $L$-coloring of $C$,
and let $E$ be the number of $L$-colorings of $G$ that extend $\phi$.
If $E\ge1$, then $\log_2 E \ge |V(G)-V(C)|/9$, unless $|V(C)|=4$ and there 
exists a vertex not in $V(C)$ adjacent to all the vertices of $C$.
\end{corollary}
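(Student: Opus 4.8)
The goal is to derive Corollary~\ref{ExpFourCycle} from Theorem~\ref{ThomExp}, so the plan is essentially a bridging argument that converts a statement about colorings counted with a precolored path $P$ into one about colorings extending a precolored cycle $C$. First I would dispose of the case $|V(C)|\le 2$: then $C$ is not a cycle of the kind we care about (or $G$ is trivial after identifying), and in any case $\phi$ colors at most two vertices and the result follows either directly from Theorem~\ref{ThomExp} with $P=C$, or is vacuous. So assume $|V(C)|\in\{3,4\}$, i.e. $C$ is a triangle or a $4$-cycle bounding the outer face, and $\phi$ is a fixed $L$-coloring of $C$.

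The core step is to reduce to a precolored path of length one or two. Pick two adjacent vertices of $C$, say $u,w$, and let $P$ be the path $uw$ (of length one). Form a new list assignment $L'$ by setting $L'(x)=\{\phi(x)\}$ for $x\in V(C)\setminus V(P)$ (the one or two vertices of $C$ not on $P$) and $L'(x)=L(x)$ otherwise. The $L'$-colorings of $G$ that restrict to $\phi$ on $P$ are exactly the $L$-colorings of $G$ extending $\phi\restriction_{V(C)}$ — indeed, forcing the lists of the off-path cycle vertices to singletons equal to their $\phi$-values, together with $\phi$ on $P$, pins down all of $C$. Now I would check the hypotheses of Theorem~\ref{ThomExp} for $(G,L',P)$: every vertex of $V(G)\setminus Z$ still has $|L'|\ge 5$; the vertices of $Z\setminus V(P)$ all have $|L'|\ge 3$ (interior-to-$C$ boundary vertices keep their $5$-lists, and the one or two off-path cycle vertices now have lists of size $1<3$ — so here I need to be a little careful). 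The cleanest fix is to take $P$ of length $|V(C)|-1$, i.e. $P=C$ minus one vertex if $|V(C)|=4$ (a path of length two along $C$) wait that is not quite what Theorem~\ref{ThomExp} wants either since the remaining boundary vertex would have a singleton list.

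So the actual right move is: delete the interior of $C$'s "outside" entirely (already done — $C$ is the outer face), and instead handle the precoloring by a standard deletion/contraction argument. Concretely, I would argue that colorings of $G$ extending $\phi$ are in bijection with $L^*$-colorings of $G-(V(C)\setminus V(P))$ where $P$ is a maximal subpath of $C$ of length one or two, $L^*(v)=L(v)\setminus\{\phi(x):x\in V(C)\setminus V(P),\ x\sim v\}$ for $v\notin V(C)$, and $L^*$ agrees with $L$ on $P$ — but this can reduce list sizes below $5$ for neighbours of the removed vertices, which breaks Theorem~\ref{ThomExp}. The honest approach, and I believe the one intended, is: keep $G$ and all its vertices, take $P\subseteq C$ with $|V(P)|\in\{1,2\}$, set $L'$ to be $L$ except that each $v\in V(C)\setminus V(P)$ gets the singleton $\{\phi(v)\}$, and then observe that Theorem~\ref{ThomExp}'s hypothesis only requires $|L'(v)|\ge 3$ for $v\in Z\setminus V(P)$ — which fails. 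Hence one must first \emph{add a dominating-type gadget} or simply note $|V(C)\setminus V(P)|\le 2$ and absorb those $\le 2$ vertices into the count: apply Theorem~\ref{ThomExp} to the near-triangulation obtained by treating each singleton-list vertex as contributing at most a constant factor loss. I expect the main obstacle to be exactly this bookkeeping: showing that pinning down the $\le 2$ vertices of $C\setminus P$ costs only a bounded multiplicative factor in the count (absorbed by the $1/9$ versus the true Thomassen exponent), and verifying the exceptional case $|V(C)|=4$ with a vertex adjacent to all of $C$ matches exactly the exceptional case $|V(P)|=3$ in Theorem~\ref{ThomExp} after the reduction. Once the gadget/bookkeeping is set up correctly, the inequality $\log_2 E\ge |V(G)-V(C)|/9$ falls out of the bound $2^{|V(G)-V(P)|/9-r/3}$ by noting $r\le |V(C)|-|V(P)|$ is bounded and $|V(G)-V(P)|\ge |V(G)-V(C)|$, with the constant $1/9$ chosen loosely enough to swallow the $-r/3$ term and the gadget loss.
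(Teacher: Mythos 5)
Your proposal circles the right neighbourhood of ideas but never lands the reduction that actually works, and the fallback you offer (a ``gadget/bookkeeping'' absorption) is not sound. The clean move, which is the paper's, is to delete \emph{exactly one} vertex $v\in V(C)$: set $G'=G\setminus v$, $P=C\setminus v$, and let $L'(w)=L(w)\setminus\{\phi(v)\}$ for each neighbour $w$ of $v$ and $L'(w)=L(w)$ otherwise. Then $P$ is a path of length $|V(C)|-2\in\{1,2\}$, as Theorem~\ref{ThomExp} requires; every vertex of $V(G')$ outside $P$ has lost at most one colour, so boundary vertices of $G'$ not in $P$ have $|L'|\ge4\ge3$ and in fact $r=0$; and (this is the point you repeatedly stumble over) any interior vertex of $G$ that is a neighbour of $v$ becomes a \emph{boundary} vertex of $G'$ once $v$ is deleted, so the hypothesis $|L'|\ge5$ for interior vertices of $G'$ is never threatened. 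Finally $|V(G')-V(P)|=|V(G)-V(C)|$, so Theorem~\ref{ThomExp} gives exactly the claimed $2^{|V(G)-V(C)|/9}$, and the exceptional case of the theorem translates verbatim (if $|L'(x)|=4$ then $x\sim v$, so $x$ is adjacent to all of $C$, and $|V(P)|=3$ forces $|V(C)|=4$).

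You actually brush up against this: you consider $P=C$ minus one vertex, and you consider deleting $V(C)\setminus V(P)$ and shrinking neighbours' lists, but you never combine them correctly, and you dismiss the deletion route because ``this can reduce list sizes below $5$ for neighbours of the removed vertices.'' That worry is unfounded when you delete only one vertex, precisely because those neighbours are no longer interior. Your other idea --- keeping all of $G$ and giving the $\le2$ vertices of $C\setminus P$ singleton lists --- genuinely fails the hypotheses of Theorem~\ref{ThomExp} (those vertices lie in $Z\setminus V(P)$ with $|L|=1<3$), and this cannot be patched by ``a bounded multiplicative factor'': the corollary's constant $1/9$ is exactly the theorem's constant with $r=0$, so there is no slack to absorb anything. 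If instead you deleted \emph{two} vertices of a $4$-cycle to force $|V(P)|=2$, some newly-exposed boundary vertices could drop to $3$-lists and $r$ could be as large as $|V(G)-V(C)|$, making the theorem's bound vacuous. So the single-vertex deletion is not just the cleanest route; it is the one that makes the numbers work at all.
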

\begin{proof}
Let $v\in V(C)$. Let $G'=G\setminus v$, $P=C\setminus v$, and for $w\in V(G')$ let
$L'(w)=L(w)\setminus\{\phi(v)\}$ if $w$ is a neighbor  of $v$ and $L'(w)=L(w)$ otherwise.
 Apply Theorem~\ref{ThomExp} to $G',P$ and $L'$. 
%Note that there does not exist a vertex $x\in V(G')$ such that $|L'(v)|=3$. Hence $r=0$ and i
It follows that there are at least 
$2^{|V(G')-V(P)|/9}=2^{|V(G)-V(C)|/9}$ distinct $L'$-colorings of $G'$,
 unless $|V(P)|=3$ and there exists $x\in V(G')-V(P)$ with $|L'(x)|=4$ and $x$ is adjacent to all vertices of $P$. 
But then $|V(C)|=4$ and $x$ is adjacent to all the vertices of $C$.
\end{proof}

\begin{definition}
Let us recall that the \emph{outer face} of a
plane graph $G$  is the unique unbounded face;
all other faces of $G$ are called \emph{internal}. 
We  denote the set of internal faces of $G$ by $\F(G)$.
%If the outer face is a bounded by a cycle $C$, then we refer to $C$ as the \emph{outer cycle} of $G$.
If $f$ is a face of  a $2$-connected plane graph $G$, then we let $|f|$ denote the length of the cycle bounding $f$.
Likewise, if $C$ is a cycle in $G$, then we denote its length by $|C|$.
\cc{If $G'$ is a subgraph of $G$ and  $f\in\F(G')$, then}
 we denote by $G[f]$ the subgraph of $G$ consisting of all vertices and edges
drawn in the closure of $f$.
If $G$ is a $2$-connected plane graph with outer cycle $C$, then we define
$${\rm def}(G):=|C|-3 - \sum_{f\in \F(G)} (|f|-3).$$
\end{definition}

%\begin{theorem}\label{CycleDef} (Cycle Sum of Faces Theorem)\\
%%
%If $T=(G,C,L)$ is a critical canvas, then ${\rm def}(T)\ge 1$.
%\end{theorem}

Let us recall that $C$-critical graphs were defined in Definition~\ref{def:critical}.
When we proved  Theorem~\ref{LinearCycle0} in~\cite{PosThoLinDisk}, we have actually proven the following
stronger result~\zz{\cite[Theorem~6.1]{PosThoLinDisk}}, which we will now need.

\begin{theorem}\label{StrongLinear2}
Let $G$ be a plane graph with outer cycle $C$, and
let $L$ be a $5$-list assignment for $G$.
If $G$ is $C$-critical with respect to $L$, then $$|V(G)\setminus V(C)|/\zz{18} + \sum_{f\in \F(G)}(|f|-3)\le |C|-4.$$
\end{theorem}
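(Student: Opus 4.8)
The plan is to induct on $|V(G)|$, establishing the slightly stronger bound by a discharging-free analysis of the structure forced by $C$-criticality. The base cases are small graphs, handled directly: if $G=C$ there is nothing to prove (the sum is empty and $|C|-4\ge 0$ is false only when $|C|\le 4$, but a cycle is never $C$-critical, so $|C|\ge 5$ here, or the statement is vacuous for the degenerate cases). For the inductive step I would first dispose of the easy reductions. If $G$ is not $2$-connected, or $C$ is not induced, or some vertex of $G\setminus V(C)$ has degree at most $4$, then standard criticality arguments (as in \cite{PosThoLinDisk}) let me delete a vertex or split along a cut vertex/chord into strictly smaller $C$-critical or list-colorable pieces, and the inequality follows by adding up the bounds from the pieces while accounting for how $|C|$, the face-length defect, and the vertex count distribute. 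The key accounting identity here is that for a chord or a $2$-cut splitting $C$ into $C_1$ and $C_2$, one has $|C_1|+|C_2|=|C|+2$ (or $+2\cdot(\text{something})$ for a $2$-cut through interior vertices), and the two "$-4$" terms combine favorably to give the single "$-4$" needed, with slack to spare.

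The main work is therefore the case where $G$ is $2$-connected, $C$ is induced, and every vertex of $V(G)\setminus V(C)$ has degree at least $5$ (so interior vertices have lists that are "large relative to degree" only barely, which is exactly the critical regime for $5$-list-coloring). Here I would invoke the structural theorem from \cite{PosThoLinDisk} on the shape of $C$-critical graphs with $5$-lists — this is the heart of Theorem~6.1 there — which describes $G$ as being built in a controlled way (the relevant local configurations around $C$ being limited, essentially because a vertex on $C$ with few neighbors and a big enough list can be used to recolor). Concretely, the proof should identify a vertex $v\in V(C)$, or a short subpath of $C$, whose neighborhood structure is constrained, delete it together with possibly one interior "fan" vertex, apply induction to what remains (after suitably shrinking the outer cycle and updating lists via the deleted vertex's color — using here Corollary~\ref{ExpFourCycle} or the plain colorability statement to guarantee the deletion preserves the relevant extension property), and then check that the change in $|V(G)\setminus V(C)|/18$, in $|C|-4$, and in $\sum_f(|f|-3)$ balances. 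The factor $18$ (rather than the $9$ appearing in Thomassen's exponential bound) is the tell-tale sign that each reduction step may remove as few as, roughly, one interior vertex while consuming a bounded amount of "boundary budget," so the amortized cost per interior vertex is $1/18$; making this amortization precise — i.e., choosing the reduction so that the charge removed from $|C|-4$ per interior vertex deleted is at least $1/18$ on average — is where the real care lies.

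The step I expect to be the main obstacle is exactly this amortized bookkeeping in the $2$-connected, high-interior-degree case: one must handle the "bad" configurations in the structural theorem (the wheel-like pieces, short separating cycles of length $4$ or $5$, and the exceptional configuration flagged in Corollary~\ref{ExpFourCycle} where an interior vertex is adjacent to all of a $4$-cycle) without ever letting the boundary budget go negative, and in particular one must verify that these exceptional configurations, when they occur, still admit a reduction consistent with the claimed slope. A secondary obstacle is making sure the list-update when deleting a boundary vertex keeps the residual instance genuinely $C'$-critical (or splits it into a critical part plus colorable parts), since criticality is not automatically inherited by subgraphs; the fix is the usual one — pass to a $C'$-critical subgraph of the residual graph and note that deleting edges/vertices only helps the inequality because it can only decrease $\sum_f(|f|-3)$ relative to the bound one needs. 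Since the excerpt tells us this is precisely Theorem~6.1 of \cite{PosThoLinDisk}, I would in practice cite that development for the structural input and present here only the reduction/amortization skeleton above.
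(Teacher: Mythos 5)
The paper does not prove Theorem~\ref{StrongLinear2} here at all: the sentence immediately preceding it states that it is imported verbatim as Theorem~6.1 of \cite{PosThoLinDisk}, and no argument is offered in the present paper. You recognized this and closed by saying you would "in practice cite that development," so at the level of what the paper actually does your proposal is consistent with it.

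The speculative sketch you attach of what the argument in \cite{PosThoLinDisk} might look like is broadly plausible in outline (induction on $|V(G)|$, reduction to the $2$-connected, induced-$C$, interior-minimum-degree-$5$ case, and accounting for how $|C|-4$, the face-length defect $\sum_f(|f|-3)$, and the interior vertex count behave under chords and $2$-cuts, for which the identity $|C_1|+|C_2|=|C|+2$ you quote is the right bookkeeping). But two specific points are off. First, invoking Corollary~\ref{ExpFourCycle} to justify that deleting a boundary vertex and updating lists "preserves the relevant extension property" is a misfit: that corollary is a quantitative statement about the number of extensions to $4$-cycle disks and is irrelevant to establishing an isoperimetric inequality for $C$-critical graphs; the correct move at that point is simply the definition of $C$-criticality together with passing to a $C'$-critical subgraph of the residual graph, which you do mention later as the "usual fix." Second, the phrase "invoke the structural theorem from \cite{PosThoLinDisk} \ldots\ this is the heart of Theorem~6.1 there" courts circularity as written: Theorem~6.1 of \cite{PosThoLinDisk} \emph{is} the statement you are being asked about, so it cannot be the structural input; what one actually needs from that paper is its independent analysis of reducible configurations, and the inequality is the conclusion of that analysis, not a lemma on the way.
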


We are now ready to prove the main result of this section.

\begin{lemma}\label{ExpManyDisc}
Let $G$ be a plane graph with outer cycle $C$,
let $L$ be a $5$-list assignment for $G$,
%If $T=(G,C,L)$ is a canvas
 and let $\phi$ be an $L$-coloring of $C$ that extends to an $L$-coloring of $G$.
Then $\log_2 E(\phi) \ge (|V(G)-V(C)| - \zz{19}(|V(C)|-3))/9$, where $E(\phi)$ is the number of extensions of $\phi$ to $G$.
\end{lemma}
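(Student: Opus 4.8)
The plan is to argue by induction on $|E(G)|$, handling four cases.  If $|V(G)-V(C)|\le 19(|V(C)|-3)$ then the asserted right‑hand side is at most $0$, while $\log_2 E(\phi)\ge 0$ because $E(\phi)\ge 1$; so assume $|V(G)-V(C)|>19(|V(C)|-3)$, and in particular $G\ne C$.  If $|V(C)|\le 4$, apply Corollary~\ref{ExpFourCycle}: it gives $\log_2 E(\phi)\ge|V(G)-V(C)|/9\ge(|V(G)-V(C)|-19(|V(C)|-3))/9$ unless $|V(C)|=4$ and some vertex $x\notin V(C)$ is adjacent to all of $C$.  In that exceptional case the four edges from $x$ to $C$ cut the disc bounded by $C$ into four regions, each bounded by a triangle; fixing an $L$-coloring $\Phi$ of $G$ extending $\phi$ and setting $c=\Phi(x)$, the coloring $\phi\cup\{x\mapsto c\}$ extends into each region, so applying Corollary~\ref{ExpFourCycle} (triangle outer cycle, no exception) to each of the four triangular subgraphs and multiplying yields $\log_2 E(\phi)\ge(|V(G)-V(C)|-1)/9\ge(|V(G)-V(C)|-19)/9$.

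From now on $|V(C)|\ge 5$.  The third case is when $C$ has a chord $e$.  Then $C+e$ bounds two closed discs meeting along $e$, and $G$ is the union of the subgraphs $H_1,H_2$ drawn in them, with outer cycles $C_1,C_2$ satisfying $|C_1|+|C_2|=|V(C)|+2$ and $|V(C_1)|+|V(C_2)|=|V(C)|+2$.  Since $H_1$ and $H_2$ share only the colored edge $e$, we have $E(\phi)=E_{H_1}(\phi|_{C_1})\cdot E_{H_2}(\phi|_{C_2})$; each $\phi|_{C_i}$ extends to $H_i$ (restrict $\Phi$), and each $H_i$ has fewer edges than $G$, so the inductive hypothesis applies to $H_1$ and $H_2$, and adding the two inequalities gives $\log_2 E(\phi)\ge(|V(G)-V(C)|-19(|V(C)|-4))/9$, with room to spare.

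The fourth case is $|V(C)|\ge 5$ with $C$ chordless.  Delete from $G$, one edge at a time, edges not on $C$ whose removal does not enlarge the set of $L$-colorings of $C$ extending to the current graph, obtaining $G_1$; then $\phi$ still extends to $G_1$, and $G_1$ is either $C$-critical or equal to $C$.  Suppose $G_1$ is $C$-critical.  Then $G_1$ is $2$-connected with at least two internal faces, so $G[f]$ has fewer edges than $G$ for each $f\in\F(G_1)$.  With $\Phi$ as above put $\psi=\Phi|_{G_1}$; for each $f\in\F(G_1)$ the restriction $\psi|_{\partial f}$ extends to $G[f]$, so by induction $\log_2 E_{G[f]}(\psi|_{\partial f})\ge(n_f-19(|f|-3))/9$ where $n_f=|V(G[f])\setminus V(\partial f)|$.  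Distinct $G[f]$ overlap only inside $G_1$, so $E(\phi)\ge\prod_{f}E_{G[f]}(\psi|_{\partial f})$, and since the sets $V(G[f])\setminus V(\partial f)$ partition $V(G)\setminus V(G_1)$ we get
\[
\log_2 E(\phi)\ \ge\ \tfrac19\Bigl(|V(G)-V(C)|-|V(G_1)-V(C)|-19\textstyle\sum_{f}(|f|-3)\Bigr).
\]
By Theorem~\ref{StrongLinear2}, $|V(G_1)\setminus V(C)|/18+\sum_f(|f|-3)\le|V(C)|-4$, whence $|V(G_1)-V(C)|+19\sum_f(|f|-3)\le 18(|V(C)|-4)+\sum_f(|f|-3)\le 19(|V(C)|-4)\le 19(|V(C)|-3)$, giving the claim.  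This is precisely the case that forces the constant $19=18+1$, the extra unit absorbing the face term $\sum_f(|f|-3)$.

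The one remaining case is $G_1=C$, i.e.\ every $L$-coloring of $C$ extends to $G$, with $C$ chordless and $|V(C)|\ge 5$; this is the main obstacle, since none of the earlier reductions applies — there is no proper $C$-critical subgraph, no chord to split on, and splitting along a shortest internal path (necessarily of length $\ge 2$) wastes roughly $19\cdot 2$ in the exponent, too much to afford.  I expect this case must be treated directly in the spirit of Thomassen's proof of Theorem~\ref{ThomExp}: reduce to a near‑triangulation, delete the vertices of $C$ one at a time down to a precolored subpath $P$ of length two while carefully tracking the colors removed from the lists of internal neighbors of the deleted vertices, and then apply Theorem~\ref{ThomExp} to the resulting graph with $P$; the amortized contribution of $2^{1/9}$ per deleted vertex, together with the bound $r=O(|V(C)|)$ on the number of size‑three lists created, should combine to yield exactly $\log_2 E(\phi)\ge(|V(G)-V(C)|-19(|V(C)|-3))/9$.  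Making the constants in that final bookkeeping match is the crux.
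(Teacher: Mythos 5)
Your cases 1--3 (the trivial inequality, $|V(C)|\le 4$ via Corollary~\ref{ExpFourCycle}, and splitting along a chord) are sound, and your treatment of the subcase in which $G_1$ is $C$-critical, using Theorem~\ref{StrongLinear2} to pay for the face perimeters, correctly identifies where the constant $19=18+1$ comes from. However, there is a genuine gap: the case $G_1=C$, i.e.\ $C$ chordless with $|V(C)|\ge 5$ and \emph{every} $L$-coloring of $C$ extending to $G$, is not proved --- you explicitly leave it as a sketch and note it is the crux. Nothing in your reduction gives structural control here (an internal vertex may be adjacent to many vertices of $C$, e.g.\ a single hub joined to all of $C$), and the Thomassen-style vertex-by-vertex deletion you propose has not been carried out, nor is it clear the bookkeeping closes with the constant $19$.

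The paper avoids this difficulty by branching on a different dichotomy, not on whether $G$ has a proper $C$-critical subgraph. It first shows (via Corollary~\ref{ExpFourCycle}) that one may assume $G$ is $2$-connected, has no separating triangle, and every separating $4$-cycle spans a vertex adjacent to all four of its vertices. Then it splits on whether some internal vertex $v$ has at least three neighbors on $C$. If so, it uses $v$ as a pivot: for four or more neighbors one applies induction to the faces of $G[V(C)\cup\{v\}]$ using ${\rm def}\ge 1$; for exactly three neighbors one tries two colors on $v$, and if one fails, Theorem~\ref{StrongLinear2} applied to the resulting $C_f$-critical subgraph pays the cost. If instead every internal vertex has at most two neighbors on $C$, the paper deletes \emph{all} of $V(C)$ at once and applies Theorem~\ref{ThomExp} to $G\setminus V(C)$ with the induced lists; the number $r$ of lists that drop to size~$3$ is controlled by observing that those vertices form the edge set of an outerplanar multigraph on $V(C)$ with at most three parallel edges per pair (no separating $4$-cycle) and at most one per edge of $C$ (no separating triangle), giving $r\le 19(|C|-3)/3$ and hence the $19(|C|-3)/9$ loss. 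This last argument is exactly what your proposal is missing: it handles the ``every coloring of $C$ extends'' situation without ever needing a $C$-critical subgraph, by exploiting the structural consequences of $2$-connectivity and the absence of small separating cycles. To repair your proof you would need either to adopt this neighbors-on-$C$ dichotomy, or to actually carry out and verify the constants in the deletion scheme you sketch; as written, the lemma is not established.
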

\begin{proof}
%{\bf Needs editing}
We proceed by induction on the number of vertices of $G$. 
It  follows from Corollary~\ref{ExpFourCycle} that we may assume that
$G$ is $2$-connected and
there does not exist a  separating triangle in $G$, and that  if there exists a separating $4$-cycle $C'$ in $G$, then there must exist a vertex adjacent to all the vertices of $C'$, as otherwise the theorem follows by induction.
We may assume that $|V(C)|\ge 5$ by Corollary~\ref{ExpFourCycle}.

First suppose there exists a vertex $v\in V(G)$ with 
 at least three neighbors on $C$, and let $G'=G[V(C)\cup \{v\}]$.
Let us first handle the case when $v$ has 
at least four neighbors on $C$. 
%Let  $T'=(G',C,L)$; 
Then ${\rm def}(G')\ge 1$, as is easily seen.
Let $\phi'$ be an extension of $\phi$ to $V(C)\cup\{v\}$ that extends to an $L$-coloring of $G$.
%As $\phi$ extends to an $L$-coloring of $G$, we can extend $\phi$ to $v$. 
For all $f\in \F(G')$, letting $C_{f}$ denote the cycle bounding $f$, it follows by induction that $\phi'$ has at least
$2^{ (|V(G[f]\setminus V(C_f))|-\zz{19}(|C_f|-3))/9}$ extensions into $G[f]$. 
Thus 
\begin{align*}
9\log_2 E(\phi) &\ge \sum_{f\in \F(G')} \cc{\left( |V(G[f]\setminus V(C_f))|-\zz{19}(|C_f|-3)\right)}\\
&\ge |V(G\setminus V(C))|-1 - \zz{19}(|C|-4)
\ge |V(G\setminus V(C))|- \zz{19}(|C|-3),
\end{align*}
 where the second inequality holds because ${\rm def}(G')\ge 1$.
% by Theorem~\ref{CycleDef}. 
The lemma follows.

%First suppose there exists a vertex $v\in V(G)-V(C)$ such that $v$ has two neighbors $u_1,u_2\in V(C)$ 
%that are at the distance at least three in $C$.
%Let $C_1,C_2$ be the two cycles in $C\cup u_1vu_2$ other than $C$.
%%Consider the cycles $C_1,C_2$ such that $C_1\cap C_2=u_1vu_2$ and $C_1\cup C_2=C\cup v$. 
%Note that $|C_1|+|C_2|=|C|+4$. 
%For $i=1,2$ let $T_i=(G_i,C_i,L_i)$, where $G_i=G\langle C_i\rangle$ is the subgraph of $G$ contained in the closed
%disk bounded by $C_i$
%and $L_i$ is the restriction of $L$ to $V(G_i)$.
%Let $\phi'$ be an extension of $\phi$ to $V(C)\cup\{v\}$ that extends to an $L$-coloring of $G$.
%%and apply induction to $T_1$ and $T_2$. 
%By induction applied to $T_1$, $T_2$ and the restrictions of $\phi'$ to $G_1$ and $G_2$ we deduce that 
%for $i=1,2$  the coloring $\phi'$ has at least 
%$2^{(|V(G_i\setminus V(C_i))|-29(|C_i|-3))/9}$ extensions into $G_i$.
%Hence, $$9\log_2 E(\phi) \ge |V(G\setminus V(C))|-1-29(|C_1|+|C_2|-6)\ge|V(G\setminus V(C))|-29(|C|-3),$$ 
%and the lemma follows.
%% as $|C_1|+|C_2|-6\ge |C|-4$.
%We may therefore assume that no vertex in $V(G)-V(C)$  has two neighbors in $V(C)$ 
%that are at the distance at least three in $C$.
%In particular, no vertex in $V(G)-V(C)$  has four or more neighbors in $C$.

Let us assume next that 
%there exists a vertex $v\in V(G)-V(C)$  with 
$v$ has exactly three neighbors on $C$. 
Let $c_1,c_2\in L(v)$ be distinct colors not equal to $\phi(u)$ for all neighbors $u$ of $v$ that belong to $C$.
%Let $S(v)= L(v)\setminus \{\phi(u)| u\in N(v)\cap V(C)\}$. Hence $|S(v)|\ge 2$. Let $c_1,c_2\in S(v)$. 
For $i\in\{1,2\}$ let $\phi_i(v)=c_i$ and $\phi_i(u)=\phi(u)$ for all $u\in V(C)$. 
If both $\phi_1$ and $\phi_2$ extend to $L$-colorings of $G$, it follows by induction applied to the faces of $G'$,
using the same calculation as in the previous paragraph and the fact that ${\rm def}(G')\ge 0$,
that $\log_2 E(\phi_i)\ge ((|V(G\setminus V(C))|-1) - \zz{19}(|V(C)|-3))/9$. Yet $E(\phi)\ge E(\phi_1)+E(\phi_2)$; hence $\log_2 E(\phi)\ge (|V(G\setminus V(C))|-\zz{19}(|V(C)|-3))/9$ and the lemma follows.

So we may suppose without loss of generality that $\phi_1$ does not extend to an $L$-coloring of $G$. 
%Let $G'=G[V(C)\cup \{v\}]$.
Hence there exists $f\in \F(G')$ such that $G[f]$ has a subgraph $G_f$ that is $C_f$-critical
with respect to $L$.
%Then def$(G_f)\ge1$ by Theorem~\ref{CycleDef}.
%By Theorem~\ref{StrongLinear2}, $|V(G_f\setminus V(C_f))| \le 22({\rm def}(G_f)-1)$. 
Let $G''=G[V(G_f)\cup V(G')]$.
% and $T''=(G'',C,L)$. 
Then 
$$
|V(G''\setminus V(C))| = |V(G_f\setminus V(C_f))|+1\le \zz{19}({\rm def}(G_f)-1)+1\le \zz{19}{\rm def}(G_f)=\zz{19}{\rm def}(G''),
$$
where the first inequality follows from Theorem~\ref{StrongLinear2}.
%$|V(G''\setminus V(C))| = |V(G_f\setminus V(C_f))|+1$ and ${\rm def}(T_f')\le {\rm def}(T'')$. 
%Hence $|V(G''\setminus V(C))|\le 49{\rm def}(T'')$, because  $1\le{\rm def}(T_f')\le {\rm def}(T'')$.
%
As $\phi$ extends to an $L$-coloring of $G$, $\phi$ extends to an $L$-coloring $\phi''$ of $G''$ that extends to $G $.
For all $f'\in \F(G'')$, letting $C_{f'}$ denote the cycle bounding $f'$, it follows by induction that $\phi''$ has at least
$2^{ (|V(G[f']\setminus V(C_{f'}))|-\zz{19}(|C_{f'}|-3))/9}$ extensions into $G[f']$.
Thus 
\begin{align*}
9\log_2 E(\phi) &\ge \sum_{f'\in \F(G'')}  \cc{\left(|V(G[f']\setminus V(C_{f'}))|-\zz{19}(|C_{f'}|-3)\right)}\\
& = |V(G\setminus V(C))|-|V(G''\setminus V(C))|- \zz{19}(|C|-3-{\rm def}(G''))\\
& \ge |V(G\setminus V(C))| - \zz{19}(|C|-3),
\end{align*}
 and the lemma follows.

We may therefore assume that every vertex in $V(G)\setminus V(C)$  has at most two neighbors in $C$.
Let us redefine $G' $ to be the graph 
 $G\setminus V(C)$. For $v\in V(G')$ let $L'(v)$ be obtained from $L(v) $
by removing $\phi(u)$ for every neighbor $u$ of $v $ that belongs to $C$, and
let  $S=\{v\in V(G')| |L'(v)|=3\}$. Thus every vertex in $S$ has exactly two neighbors in $C$.
Let us define an auxiliary multigraph $H$ with vertex-set $V(C)$ and edge-set $S$ by saying
that the ends of an edge $s\in S$ are the
 two neighbors of $s$ in $C$.
Then $H$ is an outerplanar multigraph.

Let $u,v$ be adjacent vertices of $H$. 
We claim that there are at most three edges of $H$ with ends $u,v$.
Indeed, suppose that say $s_1,s_2,s_3,s_4\in E(H)=S$ are pairwise distinct and all have ends $u,v$.
Then for all distinct integers $i,j\in\{1,2,3,4\}$ the vertices $u,s_i,v,s_j$ form
a cycle of length four. For one such pair $i,j$  this cycle includes the two vertices of
$\{s_1,s_2,s_3,s_4\}-\{s_i,s_j\}$ in its interior, contrary to what we established at the beginning of the proof.
Thus  at most three edges of $H$ have ends $u,v$.
Furthermore, we claim that if $u,v$ are adjacent in $C$, then at most one edge of $H$ has ends $u,v$.
Indeed, suppose that say distinct $s_1,s_2\in E(H)=S$ both have ends $u,v$.
Then for  $i\in\{1,2\}$ the vertices $u,s_i,v$ form
a cycle of length three, 
contrary to there not existing a separating triangle as established at the beginning of the proof.
%and the claim follows as before.

The results of the previous paragraph imply that 
$|S|=|E(H)|\le  |C| + 3(|C|-3)$. Since $|C|\ge5$ we deduce that $|S|\le \zz{19}(|C|-3)/3$.
By  Theorem~\ref{ThomExp} applied to $G'$, $L'$ and an arbitrarily chosen one-edge path $P$ we obtain
 $$\log_2 E\xx{(\phi)} \ge |V(G\setminus V(C))|/9- |S|/3\ge |V(G\setminus V(C))|/9- \zz{19}(|C|-3)/9,$$
as desired.
%%and if it has two, then they are at distance at most two in $C$.
% Let $G'=G\setminus V(C)$ and $L'(v)=L(v)\setminus \{\phi(u)| u\in N(v)\cap V(C)\}$ for all $v\in V(G')$. 
%By Theorem~\ref{ThomExp}, there exist at least $2^{|V(G')|/9 - |S|/3}$ distinct $L$-colorings of $G$ extending $\phi$,
% where $S=\{v\in V(G')| |L'(v)|=3\}$. 
%
%Let $v\in S$. Then $v$ has at least two neighbors on $C$ as $|L'(v)|=3$. Thus $v$ has exactly two neighbors $u_1,u_2$ on $C$,
%and  they are at distance at most two in $C$. 
%Thus there exists a cycle $C'$ of length at most four containing the vertices $u_1,u_2,v$ and perhaps another vertex on $C$. 
%But note then that there does not exists a vertex in the interior of $C'$, as otherwise by what we established at the beginning of the proof
%$|C'|=4$ and there exists a vertex adjacent to all the vertices of $C'$, and hence adjacent to three vertices on $C$, a contradiction. 
%It now follows that $|S|\le |C|$. Therefore 
%$$9\log_2 E(\phi) \ge |V(G')| - 3|S| \ge |V(G\setminus V(C))| - 3|C|\ge |V(G\setminus V(C))| - 29(|C|-3),$$
%as $3|C|\le 29(|C|-3)$, since $|V(C)|\ge 4$. The lemma follows. 
\end{proof}

%%%%%%%%%%%%%%%%%%%%%%%%%%%%%%%%%%%%%%%%%%%%%%%%%%%%%%%%%%%%%%%%%%%%%%%
%\newpage
%\def\R{{\cal R}}
%%%%%%%%%%%%%%%%%%%%%%%%%%%%%%%%%%%%%%%%%%%%%%%%%%%%%%%%%%%%%%%%
%\section{Hyperbolic Families of Graphs: Definition and Examples}
\section{Definition and Examples of Hyperbolic Families}
\label{sec:hyperdef}

%By a \emph{surface} we mean a (possibly disconnected) compact $2$-dimensional
%manifold with possibly empty boundary.
%The boundary of a surface $\Sigma$ will be denoted by bd$(\Sigma)$.

%{\bf DELETE}
%In the study of coloring graphs on surfaces one often encounters problems
%asking whether a precoloring of a subgraph of a graph extends to a 
%coloring of the entire graph.
%For us the subgraph will be such that every component is a cycle or
%a complete graph on at most two vertices. That motivates the following
%definition.
%
%{\bf DELETE - DUPLICATE}
%\begin{definition}
%A \emph{ring} is a cycle or a complete graph on one or two vertices.
%A \emph{graph with rings} is a pair $(G,\R)$, where $G$ is a graph and
%$\R$ is a set of vertex-disjoint rings in $G$.
%We will often say that $G$ is a graph with rings $\R$, and sometimes
%we will just say that $G$ is a graph with rings, leaving out the
%symbol for the set of rings.
%%We will sometimes omit $\R$ and speak of a graph $G$ with rings
%%subgraph of $G$ such that every component of $R$ is a ring.
%We say that a graph $G$ with rings $\R$ is \emph{embedded in
%a surface $\Sigma$}
%if the underlying graph $G$ is embedded in 
%$\Sigma$ in such a way that for every ring
%$R\in\R$ there exists a component $\Gamma$ of bd$(\Sigma)$ such that
%$R$ is embedded in $\Gamma$,  no other vertex or edge of $G$ is
%embedded in $\Gamma$, and every component of bd$(\Sigma)$ includes some
%ring of $G$.
%In those circumstances we say that $G$ is an \emph{embedded graph with rings.}
%\end{definition}

In this section we define hyperbolic families in  full generality and we give examples of families
of embedded graphs with rings that are hyperbolic.
Let us recall that  embedded graphs with rings were defined at the beginning of Section~\ref{sec:mainresults}.

\begin{definition}
Let $\F$ be a family of non-null embedded graphs with rings.
We say that $\F$ is \emph{hyperbolic} if there exists a constant $c>0$
such that if $G\in\F$ is a graph with rings that is embedded in a surface
$\Sigma$, then for every closed curve $\cc{\xi}:{\mathbb S}^1\to\Sigma$ that bounds
an open disk $\Delta$ and intersects $G$ only in vertices,
if $\Delta$ includes a vertex of $G$, then
the number of vertices of $G$ in $\Delta$ is at most
$c(|\{x\in {\mathbb S}^1\,:\, \cc{\xi}(x)\in V(G)\}|-1)$.
We say that $c$ is a \emph{Cheeger constant} for $\F$.
Let us point out that since $\Delta$ is an open disk, it does \cc{not} include the 
vertices of $G$ that belong to the image of $\cc{\xi}$.
\end{definition}

The next lemma shows that if a planar graph with rings  belongs to a hyperbolic family, then it has at least one ring.

\begin{lemma}\label{lem:onering}
Let $\F$ be a hyperbolic family of embedded graphs with rings, and let $c$ be a Cheeger constant for $\F$. 
Let $G\in \F$ be a graph embedded in 
a surface of genus zero  with $r$ rings and let $R$ be the total number of ring vertices. 
Then $r\ge 1$ and if $r=1$, then 
\yy{$G$ has at most $c(R-1)$ non-ring vertices.}
%$|V(G)|\le (c+1)R$.
\end{lemma}
\begin{proof}
As $G$ is non-null, $r \ge 1$ as otherwise we may take a closed curve not intersecting any vertices of $G$ that bounds a disk containing all vertices of $G$, a contradiction to the definition of hyperbolic family. 
If $r=1$, then let $\cc{\xi}$ be a closed curve obtained from a closed curve tracing the ring by pushing it slightly into the interior of
$\Sigma$ in such a way that the image of ${\mathbb S}^1$ under $\cc{\xi}$ will intersect the boundary of $\Sigma$ only in $V(G)\cap\hbox{bd}(\Sigma)$.
%the segments along the edges of the ring (ever so slightly) into the interior of the surface. 
By the definition of hyperbolic family applied to the curve $\cc{\xi}$ we deduce that 
\yy{$G$ has at most $c(R-1)$ non-ring vertices,}
%$|V(G)|\le  (c+1)R$, 
as desired.
\end{proof}

\subsection{Examples arising from (list-)coloring}

%\noindent
Theorem~\ref{LinearCycle0} implies that
%We will use Theorem~\ref{StrongLinear2} to show that 
embedded $6$-list-critical graphs
form a hyperbolic family. To obtain a better bound on the Cheeger constant we actually use
the stronger Theorem~\ref{StrongLinear2}.
More generally, we have the following.

\begin{theorem}
\label{thm:5listcritishyperbolic}
Let $\F$ be the family of all embedded graphs $G$ with rings $\R$ such that
$G$ is $\R$-critical with respect to some $5$-list assignment.
% $L=(L(v):v\in V(G))$ such that $|L(v)|\ge5$ for all $v\in V(G)$. 
Then $\F$ is hyperbolic with Cheeger constant~$\zz{18}$.
\end{theorem}
\begin{proof}
Let $G$ be a  graph with rings $\R$
%of total length $R$ 
embedded in a surface $\Sigma$ of Euler genus $g$
such that $G$ is $\R$-critical with respect to  
a $5$-list assignment $L$, let $R$ be the total number of ring vertices, and
let $\cc{\xi}:{\mathbb S}^1\to\Sigma$ be a closed curve that bounds
an open disk $\Delta$ and intersects $G$ only in vertices.
To avoid notational complications we will assume that $\cc{\xi}$ is a simple
curve; otherwise we split vertices that $\cc{\xi}$ visits more than once to
reduce to this case.
We may assume that $\Delta$ includes at least one vertex of $G$, for 
otherwise there is nothing to show.
Let $X$ be the set of vertices of $G$ intersected by $\cc{\xi}$.
Then $|X|\ge4$ by Theorem~\ref{thm:extend4cycle}.

Let $G_0$ be the subgraph of $G$ consisting of all vertices and edges drawn 
in the closure  of $\Delta$.
Let $G_1$ be obtained from $G_0$ as follows. For every pair of vertices $u,v\in X$
that are consecutive on the boundary of $\Delta$ we do the following. 
If $u$ and $v$ are adjacent in $G_0$, then we re-embed the edge $uv$ so that it will 
coincide with a segment of $\cc{\xi}$.
If $u,v$ are not adjacent, then we introduce a new vertex and join it
by edges to both $u$ and $v$, embedding the new edges in a segment of $\cc{\xi}$.
Thus $G_1$ has a cycle $C_1$ embedded in the image of $\cc{\xi}$, and hence
$G_1$ may be regarded as a plane graph with outer cycle $C_1$.
For $v\in V(G_0)$ let $L_1(v):=L(v)$, and for $v\in V(G_1)-V(G_0)$ let $L_1(v)$
be an arbitrary set of size five.
It follows that $G_1$ is $C_1$-critical with respect to $L_1$.
%$(G_1,C_1,L_1)$ is a critical canvas.
Since every vertex $v\in V(G_1)-V(G_0)$ is incident with an internal face of length
at least four, Theorem~\ref{StrongLinear2} implies that the number of vertices
of $G$ in $\Delta$ is at most $\zz{18}(|X|-4)$, as desired.
\end{proof}

For our next example, we need the following lemma.
The degree of a vertex $v$ in a graph $G$ will be denoted by $\deg_G(v)$,
or $\deg(v)$ if the graph is understood from the context.

\begin{lemma}
\label{lem:deg7}
Let $G$ be a graph embedded in the closed unit disk with $k$ vertices embedded on the boundary 
and $n\ge1$ vertices embedded in  the interior of the disk.
If every vertex embedded in the interior of the disk has degree in $G$ of at least seven,
then $n\le k-6$. 
\end{lemma}

\begin{proof}
We may assume that every vertex on the boundary of the disk has a neighbor in the
interior, for otherwise we may delete the boundary vertex and apply induction.
Let us first assume that $k\ge3$, and let $H$ be the planar graph obtained from
$G$ by joining by an edge every pair of non-adjacent consecutive vertices on the boundary
of the disk, and then adding one new vertex in the complement of the disk and joining it by an edge 
to every vertex on the boundary. It follows that every vertex on  the boundary of the disk
 has degree at least four in $H$. Since $H$ is planar, we have
 \begin{equation*}
6(n+k+1)-12\ge2|E(H)|=\sum_{v\in V(H)}\deg_H(v)\ge 7n+4k+k,
\end{equation*}
and the lemma follows.
If $k\le2$, then 
\zz{%
by the planarity of $G$
\begin{equation*}
6n\ge 6(n+k)-12\ge2|E(H)|=\sum_{v\in V(G)}\deg_G(v)\ge 7n,
\end{equation*}
}%
%a similar calculation applied to the original graph $G$ gives 
a contradiction, since $n\ge1$.
\end{proof}

\begin{example}
Let $\F$ be the family of all embedded graphs $G$ with rings such that
every vertex of $G$ that does not belong to a ring has degree at least seven. 
Then $\F$ is hyperbolic with Cheeger constant $1$ by Lemma~\ref{lem:deg7}.
\end{example}

\begin{example}
The family $\F$ from the previous example includes 
all embedded graphs $G$ with rings $\R$ such that
$G$ is $\R$-critical with respect to some $7$-list assignment.
% $L=(L(v):v\in V(G))$ such that $|L(v)|\ge7$ for all $v\in V(G)$.
That gives an alternate proof that the latter family is hyperbolic, and gives
a better Cheeger constant than Theorem~\ref{thm:5listcritishyperbolic}.
\end{example}

For our next example, we need the following lemma.
\qq{%
The {\sl size} of a face $f$ of an embedded graph, denoted by $|f|$,
is the sum of the lengths of the walks bounding $f$.}

\begin{lemma}
\label{lem:deg6}
Let $G$ be a graph embedded in the closed unit disk with $k$ vertices embedded on the boundary 
and $n\ge1$ vertices embedded in  the interior of the disk.
Assume that every vertex embedded in the interior of the disk has degree in $G$ of at least six,
and if it has degree exactly six, then it is either incident with a face of size at least four,
or it is adjacent to a vertex of degree at least seven, or it is adjacent to a vertex embedded on the 
boundary of the disk.
Then $n\le 9k-48$. 
\end{lemma}

\begin{proof}
We may assume that every vertex on the boundary of the disk has a neighbor in the
interior, for otherwise we may delete the boundary vertex and apply induction.
Let us first assume that $k\ge3$, and let $H$ be the planar graph obtained from
$G$ by joining by an edge every pair of non-adjacent consecutive vertices on the boundary
of the disk, and then adding one new vertex in the complement of the disk and joining it by an edge 
to every vertex on the boundary. It follows that every vertex on  the boundary of the disk
 has degree at least four in $H$. 

Let $N_6$ be the set of vertices embedded in the interior of the disk with degree six,
and let $N_7$ be the set of vertices embedded in the interior of the disk that have degree at least seven. 
Let $A$ be the subset of $N_6$ 
consisting of vertices adjacent
to a vertex embedded on the boundary of the disk, 
let $B$ be the subset of $N_6$  consisting of vertices incident with a face of size at least four and let $C$
be the subset of $N_6$ consisting of vertices adjacent to a vertex of degree at least seven. 
By assumption, $N_6 = A \cup B \cup C$.
Since $H$ is planar, we have
% \begin{align*}
$$
6(n+k+1)-12 - 2\sum_{f\in\ F(H)} (|f|-3)= 2|E(H)|=\sum_{v\in V(H)}\deg_H(v) $$
$$
\ge \sum_{v\in N_7}(\deg_H(v)-6) +6n +4k+k.
$$
%\end{align*}
%
Hence $$k-6\ge \sum_{v\in  N_7}(\deg_H(v)-6) + 2\sum_{f\in \F(H)} (|f|-3).$$
However, $|B|$ is at most the sum of the sizes of faces of sizes at least four. That is, 
$$|B|\le \sum_{f\in \F(H), |f|\ge 4} |f| \le 4 \sum_{f\in F(H), |f|\ge 4} (|f|-3).$$
Similarly, $|C|$ is at most the sum of degrees of vertices of degree at least seven. 
That is, 
$$|C| \le \sum_{v\in N_7} \deg_H(v) \le 7\sum_{v\in  N_7} (\deg_H(v)-6).$$
Hence $|B|+|C|\le 7(k-6)$. Yet since $H$ is planar, 
$$6n+6k-6 \ge \sum_{v\in V(H)} \deg_H(v) \ge 6n + n-|N_6| + 3k + |A| + k.$$
Hence $2k-6\ge |A|+n-|N_6|$. Thus $$n \cc{\le} |B|+|C|+|A|+n-|N_6|\le 7(k-6)+2k-6 = 9k-48,$$ as desired.
If $k\le2$, then a similar calculation applied to the original graph $G$ gives a contradiction.
\end{proof}

\begin{example}
Let $\F$ be the family of all embedded graphs $G$ with rings such that
every vertex of $G$ that does not belong to a ring has degree at least six,
and   if it has degree exactly six, then it is either incident with a face of size at least four,
or it is adjacent to a vertex of degree at least seven, or it is adjacent to a vertex that belongs to a ring.
Then $\F$ is hyperbolic with Cheeger constant $9$ by Lemma~\ref{lem:deg6}.
\end{example}

\begin{example}
The family $\F$ from the previous example includes 
all embedded graphs $G$ with rings $\R$ such that
$G$ is $\R$-critical with respect to some $6$-list assignment.
% $L=(L(v):v\in V(G))$ such that $|L(v)|\ge6$ for all $v\in V(G)$.
That gives an alternate proof that the latter family is hyperbolic, and gives a
better Cheeger constant  than Theorem~\ref{thm:5listcritishyperbolic}.
\end{example}

For our next example, we need the following lemma.

\begin{lemma}
\label{lem:44}
%There exists an absolute constant $c$ satisfying the following.
Let $(G,\R,\Sigma)$ be an embedded graph with rings such that 
every vertex that does not belong to a ring has degree at least four,
every face of $G$ of size \xx{three is incident with an edge of $\R$, and every face of} 
 size exactly four  is incident with either a vertex of degree at least five,
or a vertex \qq{of degree at least one} that belongs to a ring. 
Let $g$  be the Euler genus of $\Sigma$ and let $R$ be the total number of ring vertices.
Then $|V(G)|\le 20(g+R-2)$.
%%%XXXX
\end{lemma}

\begin{proof}
For the purpose of this proof we will regard $G$ as a graph without rings embedded in
the surface $\widehat\Sigma$. Thus every cycle in $\R$ bounds a face.
We may assume that every ring vertex has degree at least one, for otherwise we may delete it and apply induction.
Let $\F$ be the set of all faces of $G$.
We begin by giving each vertex  $v\in V(G)$ a charge of $\deg(v)-4$ and every face $f\in\F$
a charge of $|f|-4$. By Euler's formula the sum of the charges is \qq{at most} $4(g-2)$.
Next we add \xx{four units of charge to every ring vertex of degree one}, 
$3$ units of charge to every ring vertex \xx{of degree at least two} and   \xx{$3$  units of charge}  to every face bounded by
a cycle in $\R$. 
%\yy{and an additional one unit of charge to every ring . 
Now the sum of charges is at most $4(g+R-2)$, and every vertex and every face has
non-negative charge\qq{, except for triangles which have charge $-1$}. 
In the next step every vertex of degree at least five and every ring vertex of degree at least two
will send $1/5$ of a unit of charge to every incident face of length four, \xx{and every ring vertex of degree 
at least \qq{two} will send an additional $3/5$ units of charge to every incident face of length three}. 
Since no ring vertex of degree one is incident with a face of length four,
this redistribution of the charges \qq{guarantees} that every vertex and every face has
non-negative charge. Furthermore, every face will have charge of at least $1/5$,
\xx{and every ring vertex will have charge of at least $\qq{2}/5$.}
Finally, every face will send $1/20$ of a unit of charge to every incident vertex. Then every face will end up
with nonnegative charge and every vertex will end up with a charge of at least $4/20=1/5$. Thus
$|V(G)|/5\le 4(g+R-2)$, and the lemma follows.
\end{proof}

\begin{example}
\label{ex:44}
Let $\F$ be the family of all embedded graphs $G$ with rings such that
every vertex of $G$ that does not belong to a ring has degree at least four,
every face of $G$ of size \xx{three is incident with an edge of $\R$, and every face of} 
 size exactly four  is incident with either a vertex of degree at least five,
%every face of $G$ has size at least four, 
%and   if it has size exactly four, then it is incident with either a vertex of degree at least five,
or a vertex \qq{of degree at least one}  that belongs to a ring. 
Then $\F$ is hyperbolic \xx{with Cheeger constant~$19$} by  Lemma~\ref{lem:44}.\REM{Deleted:
\zz{using the argument of the proof of Theorem~\ref{thm:5listcritishyperbolic}.}}
\cc{(The Cheeger constant is indeed $19$ and not $20$, because we count vertices in the {\em open}
disk bounded by $\cc{\xi}$.)}
\end{example}

\begin{lemma}
\label{lem:g4l4}
The family  of
%from the previous example includes 
all  embedded graphs $G$ with rings $\R$ such that every triangle in $G$ is not null-homotopic and
$G$ is $\R$-critical with respect to some $4$-list assignment \zz{$L$}
%$L=(L(v):v\in V(G))$ such that $|L(v)|\ge4$ for all $v\in V(G)$. Thus the latter family
 is hyperbolic \xx{with Cheeger constant~$19$}.
\end{lemma}

\begin{proof}
\zz{%
This follows  by \xx{a similar} argument as Theorem~\ref{thm:5listcritishyperbolic}.
Let $G_1$ and  $C_1$ be as in the proof of that theorem, \xx{except that we do not introduce new 
vertices; instead we make adjacent  every pair of vertices of $X$ that are consecutive on the boundary of $\Delta$}.
 Then the graph $G_1$
with one  ring $C_1$ embedded in the disk belongs to the family of Example~\ref{ex:44}.
To see that let first $v\in V(G_1)-V(C_1)$. If $v$  has degree at most three,
then every $L$-coloring of $G\setminus v$ (and one exists by the $\R$-criticality of $G$) can be extended
to an $L$-coloring of $G$, a contradiction. Thus $v$ has degree at least four.
Now let $f$ be a face of $G_1$ of size four, let $\qq{Y}$ be the set of (four) vertices incident with $f$,
and let us assume  that every vertex in $\qq{Y}$ has degree four and does not belong to $C_1$.
The subgraph of $G$ induced by $\qq{Y}$ is a cycle, \qq{because \cc{this} subgraph is contained in $\Delta$ and}
no triangle in $G$ is null-homotopic.
%If the vertices in $X$ are pairwise adjacent and $|L(v)|=4$ for every $v\in X$, then
%the $\R$-criticality of $G$ implies that $G$  consists of the disjoint union of $\R$ and the complete graph on $X$,
%as desired; otherwise
Thus every $L$-coloring of $G\setminus \qq{Y}$ (and one exists by the $\R$-criticality of $G$) can be extended
to an $L$-coloring of $G$, a contradiction. This proves our claim that  $G_1$
 belongs to the family of Example~\ref{ex:44}, and hence $G$ satisfies the hyperbolicity 
condition with Cheeger constant $19$, as desired.
}
%
%Let $G$  be as stated, and let it  be $\R$-critical with respect to a $4$-list assignment $L$. 
%We claim that either $G$ belongs to the family from  Example~\ref{ex:44}, 
%or $G$  consists of the disjoint union of $\R$ and the complete graph on four vertices.
%To prove the claim let $v\in V(G)$ be a non-ring  vertex. If $v$  has degree at most three,
%then every $L$-coloring of $G\setminus v$ (and one exists by the $\R$-criticality of $G$) can be extended
%to an $L$-coloring of $G$, a contradiction. Thus $v$ has degree at least four.
%Now let $f$ be a face of $G$ of size four, let $X$ be the set of (four) vertices incident with $f$,
%and let us assume  that every vertex in $X$ has degree four and does not belong to a ring.
%If the vertices in $X$ are pairwise adjacent and $|L(v)|=4$ for every $v\in X$, then
%the $\R$-criticality of $G$ implies that $G$  consists of the disjoint union of $\R$ and the complete graph on $X$,
%as desired; otherwise every $L$-coloring of $G\setminus X$ (and one exists by the $\R$-criticality of $G$) can be extended
%to an $L$-coloring of $G$, a contradiction. This proves our claim.
%}
%
%\zz{%
%The lemma follows from the claim, because the family from  Example~\ref{ex:44} is hyperbolic by  Lemma~\ref{lem:44}.
%}
%The family in question is included in the family from  Example~\ref{ex:44}, which is hyperbolic by  Lemma~\ref{lem:44}.
\end{proof}

\begin{lemma}
\label{lem:g5hyper}
Let $\F$ be the family of embedded graphs $G$ with rings $\R$ such that
every cycle in $G$ of length at most four is not null-homotopic and $G$ is
$\R$-critical with respect to some $3$-list  assignment.
% $L=(L(v):v\in V(G))$ such that $|L(v)|\ge3$ for all $v\in V(G)$.
Then $\F$ is hyperbolic \xx{with Cheeger constant $37$}.
% by~\cite[Theorem~5]{DvoKawChoosegirth5}.
\end{lemma}

\begin{proof}
This follows from Theorem~\ref{DvoKawCycle} similarly as the proof of Theorem~\ref{thm:5listcritishyperbolic},
\xx{the difference being that if the vertices $u,v$ are not adjacent we need to introduce two new vertices
forming a path of length three with ends $u$ and $v$}.
\end{proof}

A $k$-coloring $\phi$ of a graph $G$ is {\em acyclic} if every two color classes induce
an acyclic graph.
Borodin~\cite{Borodin} proved that every planar graph has an acyclic $5$-coloring.

\begin{problem}
Let $k\ge2$ be an integer, and let $\F_k$ be the class of all embedded graphs $G$ with
no rings such that $G$ does not admit an acyclic $k$-coloring, but every proper subgraph
of $G$ does.
%By~\cite{Postle20} the family $\F_k$ is hyperbolic for all $k\ge 20$.
\cc{Is $\F_k$  hyperbolic for  $k\ge5$?}
\end{problem}

Our next example is related to the following conjecture of Steinberg from 
1976~\cite[Problem~2.9]{JenTof}.

\begin{conjecture}
Every planar graph with no cycles of length four or five is $3$-colorable.
\end{conjecture}

While Steinberg's conjecture \xx{was recently disproved by
Cohen-Addad, Hebdige, Kral, Li and Salgado~\cite{CohHebKraLiSal}},
% is still open, 
Borodin, Glebov, Montassier and 
Raspaud~\cite{BorGleMonRas}
showed that every planar graph with no cycles of length four, five, six or seven
is $3$-colorable. That leads us to the following example.

\begin{example}
Let $k\ge5$ be an integer, and let $\F_k$ be the class of all embedded graphs $G$ with
rings $\R$ such that every cycle of length at least four and at most $k$ is a ring, and
$G$ is $\R$-critical with respect to the list assignment $L=(L(v):v\in V(G))$,
where $L(v)=\{1,2,3\}$ for every $v\in V(G)$.
Yerger~\cite{YerPhD} proved that $\F_k$ is hyperbolic for all $k\ge10$.
It seems plausible that the techniques developed by Borodin, Glebov, Montassier and 
Raspaud~\cite{BorGleMonRas} can be
used to extend this argument to $k\ge7$, but the \xx{case $k=6$ is} wide open.
\end{example}

\begin{example}
Let $G$ be a graph, let $L=(L(v):v\in V(G))$ be a list assignment such that
$|L(v)|\ge5$ for every $v\in V(G)$, and let $c\ge1$ be  an integer.
By a {\em $c$-$L$-coloring} of $G$ we mean a mapping $\phi:V(G)\to\bigcup L(v)$
such that $\phi(v)\in L(v)$ for every $v\in V(G)$, and for every $x$, every
component of the subgraph of $G$ induced by vertices $v$ with $\phi(v)=x$
has at most $c$ vertices. Thus $\phi$ is a $1$-$L$-coloring if and only if it
is an $L$-coloring.

Let $\F_c$ be  the class of all embedded graphs $G$ with rings $\R$ such
that for every proper subgraph $G'$ of $G$ that contains all the rings there
exists a $c$-$L$-coloring $\phi$ of the rings such that $\phi$ extends
to a $c$-$L$-coloring of $G'$ but not to a $c$-$L$-coloring of $G$.
The proof of Theorem~\ref{LinearCycle0} can be modified to show that
$\F_c$ is hyperbolic for every integer  $c\ge1$.   
\end{example}

\subsection{Examples pertaining to exponentially many colorings}

\begin{definition}
\label{def:eacrit}
Let $\epsilon\ge0$ and $\alpha\ge0$. 
Let $G$ be a graph with rings $\R$ 
%of total length $R$ 
embedded in a surface $\Sigma$ of Euler genus $g$, let $R$ be the total number of ring  vertices,
and let $L$ be a list assignment for $G$.
We say that $G$ is \emph{$(\epsilon,\alpha)$-exponentially-critical with respect to $L$}
if \xx{$G\ne\bigcup\R$} and for every proper subgraph $G'$ of $G$ that includes all the rings there exists
an $L$-coloring $\phi$ of $\bigcup\R$ such that there exist at least
 $2^{\epsilon (|V(G')|-\alpha(g+R))}$ distinct $L$-colorings of $G'$ extending $\phi$,
but there do not exist at least
$2^{\epsilon (|V(G)|-\alpha(g+R))}$ distinct $L$-colorings of $G$ extending $\phi$.
%We say that $G$ is \emph{$(\epsilon,\alpha)$-exponentially-critical}
%if it is $(\epsilon,\alpha)$-exponentially-critical with respect to some $5$-list assignment for $G$.
%Let $F_{\epsilon,\alpha}$ be the family of all $(\epsilon,\alpha)$-exponentially-critical graphs with boundary.
\end{definition}

%\newpage

\begin{theorem}\label{ExpCritDisc}
Let $0\le\epsilon \le 1/18$ and $\alpha\ge 0$. 
Then the family of embedded graphs with rings that are $(\epsilon,\alpha)$-exponentially-critical 
with respect to some $5$-list assignment 
is hyperbolic with Cheeger constant $\zz{57}$ (independent of $\epsilon$ and $\alpha$).
\end{theorem}

\begin{proof}
Let $G$ be a  graph with rings $\R$
%of total length $R$ 
embedded in a surface $\Sigma$ of Euler genus $g$
such that $G$ is $(\epsilon,\alpha)$-exponentially-critical with respect to  
a $5$-list assignment $L$, let $R$ be the total number of ring vertices, and
let $\cc{\xi}:{\mathbb S}^1\to\Sigma$ be a closed curve that bounds
an open disk $\Delta$ and intersects $G$ only in vertices.
To avoid notational complications we will assume that $\cc{\xi}$ is a simple
curve; otherwise we split vertices that $\cc{\xi}$ visits more than once to
reduce to this case.
We may assume that $\Delta$ includes at least one vertex of $G$, for 
otherwise there is nothing to show.
Let $X$ be the set of vertices of $G$ intersected by $\cc{\xi}$.
Then $|X|\ge4$ by Corollary~\ref{ExpFourCycle}, since $\epsilon\le1/9$.

Let $G_0, G_1, C_1$ and $L_1$ be defined as in the proof of Theorem~\ref{thm:5listcritishyperbolic}.
We may assume for a contradiction that $\zz{|V(G_0)-X|>57}(|X|-1)$.
Let $G_2$ be the smallest subgraph of  $G_1$ such that
$G_2$ includes $C_1$  as a subgraph and every $L_1$-coloring of $C_1$ that extends to an $L_1$-coloring
of $G_2$ also extends to an $L_1$-coloring of $G_1$.
Then $G_2$ is $C_1$-critical with respect to $L_1$, and hence 
$$|V(G_2)-V(C_1)|/\zz{19}\le |C_1|-4-\sum_{f\in\F(G_2)}(|f|-3)\le|X|-4$$
by Theorem~\ref{StrongLinear2}.
(Let us recall that $\F(G)$ denotes the set of internal faces of $G$.)
%
%
% kernel $T_2=(G_2,C_1,L_1)$ for the canvas $(G_1,C_1,L_1)$ 
%such that $|V(G_2)-V(C_1)|\le 22(|X|-1)$. Thus $G_2$ is a proper subgraph of $G_1$.
%
Let $G_0'=G\setminus (V(G_1)- V(G_2))$. Thus $G_0'$ is a proper subgraph of $G$, and, in fact,
\zz{%
$$|V(G)-V( G_0')|= |V(G_0)-V(G_0')|=|V(G_0)-X|-|V(G_2)-V(C_1)|  \ge38(|X|-1).$$
}%
%$|V(G)-V( G_0')|\ge \zz{38}(|X|-1)$.
As $G$ is $(\epsilon,\alpha)$-exponentially-critical with respect to $L$, there exists an
$L$-coloring $\phi$ of $\bigcup\R$ such that $\phi$ extends to at least
 $2^{\epsilon (|V(G_0')|-\alpha(g+R))}$
distinct $L$-colorings of $G_0'$,
but does not extend to at least $2^{\epsilon (|V(G)|-\alpha(g+R))}$ distinct $L$-colorings of $G$.

Let $\phi'$ be an $L$-coloring of $G_0'$ that extends $\phi$, 
let $f\in \F(G_2)$,
let $G_f$ be the subgraph of $G_1$ drawn in the closure of $f$, and let $C_f$ be the cycle bounding $f$.
Let $l_f$ be defined as $|V(C_f)|$ minus the number of vertices of $V(G_1)-V(G_0)$ incident with $f$.
%, and let $l_f:=|V(C_f)|$ otherwise.
%Let $T_f=(G_f,C_f,L)$ be the canvas in the closed disk bounded by $f$. 
By the definition of $G_2$, $\phi'$ extends to an $L$-coloring of $G_f$. 

We claim that $\phi'$ extends to at least 
$2^{( |V(G_f)-V(C_f)|- \zz{19}(l_f-3) )/9}$ distinct $L$-colorings of $G_f$. 
If $f$ is incident with no  vertex of $V(G_1)-V(G_0)$, then this follows from  Lemma~\ref{ExpManyDisc},
and otherwise we argue as follows.
If $|f|=4$, then we apply Corollary~\ref{ExpFourCycle},
%Theorem~\ref{ThomExp}, 
and otherwise we repeat the following construction.
Let $v\in V(G_1)-V(G_0)$ be incident with $f$, and let $u_1,u_2$ be  its two neighbors.
We delete $v$ and either add an edge joining $u_1$ and $u_2$ (if $\phi'(u_1)\ne\phi'(u_2)$), 
or identify $u_1$ and $u_2$ (if $\phi'(u_1)=\phi'(u_2)$).
We repeat this construction for every $v\in V(G_1)-V(G_0)$  incident with $f$,
and apply Lemma~\ref{ExpManyDisc} to the resulting graph.
This proves our  claim that $\phi'$ extends to at least 
$2^{( |V(G_f)-V(C_f)|- \zz{19}(l_f-3) )/9}$ distinct $L$-colorings of $G_f$. 

Let $E$ be the number of extensions of $\phi'$ to $G$. 
We have  $\sum_{f\in \F(G_2)}|V(G_f)-V( C_f)| = |V(G)- V(G_0')|$ 
and, by Theorem~\ref{StrongLinear2}
\begin{align*}
\sum_{f\in \F(G_2)} (l_f-3) &=\sum_{f\in \F(G_2)} (|C_f|-3)-|V(G_1)-V(G_0)|\\
&\le|C_1|-4-|V(G_1)-V(G_0)|= |X|-4,
\end{align*}
%{\bf EXPLAIN!}
 and hence
\begin{align*}
9\log_2 E&\ge \sum_{f\in \F(G_2)} (|V(G_f)-V(C_f)|-\zz{19}(l_f-3))\ge |V(G)-V( G_0')|-\zz{19}(|X|-4)\\
&\ge|V(G)-V( G_0')|/2,
\end{align*}
%As, we find that 
%$\log E \ge (|V(G)-V( G_0')|-29(|X|-3))/9$. 
where the first inequality uses the last claim of the previous paragraph, and the last inequality uses
the inequality $|V(G)-V( G_0')|\ge \zz{38}(|X|-1)$, which we established earlier.
%we find that $\log E\ge |V(G)-V( G_0')|/18$. 

\zz{%
The coloring $\phi$ extends to at least $2^{\epsilon (|V(G_0')|-\alpha(g+R))}$
distinct $L$-colorings of $G_0'$, and each such extension extends to at least 
$2^{|V(G)-V(G_0')|/18}$ distinct $L$-colorings of $G$.%
}
But then as $\epsilon\le 1/18$, there exist at least $2^{\epsilon (|V(G)|-\alpha(g+R))}$ 
distinct $L$-colorings of $G$ extending $\phi$, a contradiction.
\end{proof}

The following is shown in~\cite[Theorem~\cc{5.10}]{PostleGirth5}.

\begin{lemma}
\label{lem:g5exphyper}
Let $\epsilon>0$ and $\alpha\ge0$, and 
let $\F$ be the family of embedded graphs $G$ with rings   such that 
every cycle of length four or less is 
\xx{equal to a ring}
%not null-homotopic 
and  $G$  is $(\epsilon,\alpha)$-exponentially-critical with respect to some $3$-list assignment.
%By \cite[Theorem 6.2]{PostleGirth5}, 
If $\epsilon<1/20000$, then the family $\F$ is hyperbolic
with Cheeger constant  independent of $\epsilon$ and $\alpha$. 
\end{lemma}

The following \cc{lemma follows from the work of} \xx{Kelly and  Postle~\cite{KelPos}}.
%%%XXXXX

\begin{lemma}
\label{ex:g4exphyper}
Let $1/8\ge\epsilon\ge0$ and $\alpha\ge0$, and 
let $\F$ be the family of embedded graphs $G$ with rings   such that 
every triangle is 
\xx{equal to a ring}
%not null-homotopic 
and  $G$  is $(\epsilon,\alpha)$-exponentially-critical with respect to some $4$-list assignment.
%By \cite[Theorem 6.2]{PostleGirth5}, 
%If $\epsilon$ is sufficiently small, 
Then the family $\F$ is hyperbolic
with Cheeger constant \cc{$67.5$}.
% independent of $\epsilon$ and $\alpha$. 
\end{lemma}

\cc{%
\begin{proof}
Let $\F$ be as stated, let $G\in\F$, and let $\cc{\xi} :{\mathbb S}^1\to\Sigma$ be a closed curve that 
bounds an open disk $\Delta$ and intersects $G$ only in vertices and $\Delta$ includes a vertex of $G$.
Then by Theorem~\ref{thm:extend4cycle} and~\cite[Theorem~7]{KelPos} the number of times, $N$, 
the curve $\cc{\xi}$ intersects $G$, counting multiplicities, satisfies $N\ge3$.
By~\cite[Theorem~8]{KelPos} the number of vertices in $\Delta$ plus $N$ is at most $46N$.
Thus the number of vertices in $\Delta $ is at most $45N\le 67.5(N-1)$, as desired.
\end{proof}
}

%\begin{example}
%\label{ex:curvecut}
%Family not closed under curve cutting
%\end{example}

%%%%%%%%%%%%%%%%%%%%%%%%%%%%%%%%%%%%%%%%%%%%%%

\section{The Structure of Hyperbolic Families}
\label{sec:structure}

\cc{%
\begin{definition}
A graph $G$ with rings $\cal R$ embedded in a surface $\Sigma$ is 
\emph{$2$-cell embedded in
$\Sigma$} if every face of $G$ is simply connected.
\end{definition}
}

In this section we investigate the structure of  hyperbolic families of embedded graphs with rings.
The main result is Theorem~\ref{thm:sleevedec1}.
\zz{%
We progress to the main result as follows. In Subsection~\ref{sec:frames}, we define ``frames", which are essentially a 
subgraph of an embedded graph with only one face. 
We also show that every $2$-cell-embedded graph has a frame (Lemma~\ref{lem:frameexists}) and that minimal 
frames satisfy certain geodesic properties (Lemma~\ref{lem:optframe}). 
In Subsection~\ref{sec:sepgrow} we show (Lemma~\ref{lem:logdist}) that for hyperbolic families the vertices in the 
interior of a disk have ``logarithmic distance" to the boundary of the disk (an improvement over the trivial bound of 
linear distance as guaranteed by the definition of hyperbolic). 
Conversely, then, the neighborhood of a vertex contained in a disk experiences ``exponential growth" in its diameter 
(Lemma~\ref{lem:expgrowth}).
}

\zz{%
Combining these ideas in Subsection~\ref{sec:ew}, we prove the key Theorem~\ref{thm:ewconst},
 which says that large hyperbolic graphs have a short (i.e., as a function of the Cheeger constant) non-null homotopic cycle. 
Theorem~\ref{thm:ewconst} is the key to proving the main result, Theorem~\ref{thm:sleevedec1}. 
The intuition for the  proof \xx{of Theorem~\ref{thm:ewconst}} is that if no such cycle exists, then balls around the midpoints of the segments of the frame 
have exponential growth in the lengths of the segments, while by hyperbolicity, the whole graph is linear in the size of 
the frame which is equal to the sum of the lengths of its segments. 
This linear upped bound versus the exponential growth then provides a contradiction.
}

\zz{%
Finally in Subsection~\ref{subsec:structure}, we prove the main result (Theorem~\ref{thm:sleevedec1}), 
which provides a structural decomposition for hyperbolic families by asserting that all but $O(g)$ vertices of a graph 
in such a family belong to cylinders of small edge-width (which we call ``sleeves"). The proof of 
Theorem~\ref{thm:sleevedec1} proceeds inductively by cutting along the short cycles guaranteed by Theorem~\ref{thm:ewconst}.
}

\subsection{Frames}
\label{sec:frames}

\begin{definition}
Let $G$ be a graph with rings $\R$ embedded in a surface $\Sigma$ and 
let $F$ be a subgraph of $G$ that includes every ring of $G$ as a subgraph.
Thus $F$ may be regarded as a graph with rings $\R$.
We say that $F$ is a \emph{frame} of $G$ if for every component $\Sigma_0$
of $\Sigma$ the subgraph $F_0$ of $F$ embedded in $\Sigma_0$ has
exactly one face, this unique face is  simply connected,
and every vertex of $F$ of degree at most one in $F$ belongs to a ring in $\R$.
\end{definition}

\begin{lemma}
\label{lem:frameexists}
Let $G$ be a graph with rings such that $G$ is $2$-cell embedded 
in a surface $\Sigma$. Then $G$ has a frame.
\end{lemma}

\begin{proof}
By applying the forthcoming argument to every component of $\Sigma$
we may assume that $\Sigma$ is connected.
For a connected surface we proceed by induction on 
\zz{three times}
the number of faces of $G$
plus the number of vertices of degree one that belong to no ring. 
Since $\Sigma$ is connected and $G$ is $2$-cell embedded, it follows that
$G$ is connected.
If $G$ has one face and no vertex of degree one that belongs to no ring,
then $G$ is a frame of itself.
If $v$ is a vertex of $G$ of degree one that belongs to no ring,
then $G\backslash v$ is a graph with the same set of rings that
is $2$-cell embedded in $\Sigma$.
By induction, $G\setminus v$ has a frame which is then a frame of $G$.
So we may suppose that $G$ has at least two faces. 
But then there is an edge $e$ of $G$ such that 
the two faces incident with $e$ are distinct. 
It follows that $e$ belongs to no ring.
Thus $G\setminus e$ is $2$-cell embedded in $\Sigma$ with the same set
of rings and it has fewer faces than $G$,
and at most two more vertices of degree one than $G$. 
By induction, $G\setminus e$ has a frame which is then a frame of $G$.
\end{proof}

\begin{definition}
Let $F$ be a graph with rings.
We say that a vertex $v\in V(F)$ is \emph{smooth} if it has degree two in $F$
and belongs to no ring.
By a \emph{segment} of $F$ we mean either a cycle in $F$ such that every 
vertex of the cycle except possibly one is smooth, or a path $P$ in $F$ such that
every internal vertex of $P$ is smooth, the ends of $P$ are not smooth,
and if $P$ has no edges, then its unique
vertex is an isolated vertex of $F$.
It follows that every graph with rings is an edge-disjoint union of its segments.
An \emph{internal vertex} of a segment $S$ of $F$ is a vertex of
$S$ that is smooth.
Thus if $S$ is a path, then every vertex of $S$ other than its ends is
internal. 
%If $S$ is a cycle containing no ring vertex and $S$ is a component of $F$, then
%every vertex of $S$ is internal. Otherwise $S$ is a cycle and
%exactly one vertex $s$ of $S$ is not smooth,
%in which case every vertex of $V(S)-\{s\}$ is defined to be an
%internal vertex of $S$.
It follows that if $S$ is a segment, then $E(S)$ either is a subset of 
the edge-set of a ring, or is disjoint from the edge-sets of all rings. 
In the latter case we say that $S$ is a \emph{non-ring segment}.
\end{definition}

Our next lemma
shows that the number of non-ring segments of a frame is bounded by a function of
the Euler genus and the number of rings.

\begin{lemma}
\label{lem:numbersegments}
Let $G$ be a graph with $r$ rings such that $G$ is $2$-cell embedded 
in a surface $\Sigma$ of Euler genus $g$.
Let us assume that no component of $\Sigma$ is the sphere (without boundary) and that $\Sigma$ has  $h$ components. 
If $F$ is a frame of $G$, then the number of non-ring segments of $F$ is at most 
$3g+2r-2h$.
\end{lemma}

\begin{proof}
It suffices to prove the lemma for $h=1$. 
\zz{Thus $F$ is connected.}
%(as $g,r\ge 1$ for each component and the formula is additive over components). 
We contract each ring of $F$ to a vertex, called a \emph{new vertex},
and consider the natural embedding
of the resulting \aa{multi}graph $H'$ in the surface $\widehat\Sigma$ obtained from $\Sigma$ by capping
off each boundary component with a disk.
Since $\Sigma$ is not the sphere it follows that $H'$ has at least one vertex.
If $H'$ has exactly one vertex \aa{and no edges}, then $\Sigma$ is the disk, $r=1$, there are no non-ring
segments, and hence the lemma holds.
If $H'$ is a cycle containing no ring vertex, then $\Sigma$ is the projective plane,
$r=0$, there is exactly one non-ring segment, and, again, the lemma holds.

Thus we may assume that $H'$ has at least two vertices, and that it is not a cycle
containg no ring vertex.
Let $H$ be the \aa{multi}graph obtained from $H'$ by suppressing all vertices of degree two that 
are not new. Then the number of edges of $H$ is equal to the number of non-ring
segments of $F$.
By Euler's formula, $|V(H)|=|E(H)|+1-g$, because $H$ has exactly one face.  
We have $2|E(H)|=\sum_{v\in V(H)} \deg_H(v)\ge 3|V(H)|-2r$, because every vertex of $H$
that is not new has degree at least three, and every new vertex has degree at least one.
Thus $|V(H)|\le 2g+2r-2$, and hence $|E(H)|\le 3g+2r-3$, as desired.
\end{proof}

\begin{definition}
Let $G$ be a graph with rings embedded in a surface $\Sigma$,
and let $F$ be a frame of $G$.
We say that the frame $F$ is \emph{optimal} if the following two
conditions are satisfied for every segment $S$ of $F$:
\begin{enumerate}
\item[(O1)] For every two distinct vertices $x,y\in V(S)$
and every path $P$ in $G$ with ends $x$ and $y$ and otherwise disjoint
from $F$, if $Q$ is a subpath of $S$ with ends $x$ and $y$ and
every internal vertex smooth, then $|V(P)|\ge|V(Q)|$.
\item[(O2)] For every internal vertex $x$ of $S$, every vertex 
$y\in V(F)-V(S)$ and every path $P$ in $G$ with ends $x$ and $y$ and 
otherwise disjoint from $F$, if $Q_1,Q_2$ are the two subpaths of $S$ 
with one end $x$ and 
and the other end not smooth,
then $|V(P)|\ge\min\{|V(Q_1)|,|V(Q_2)|\}$.
\end{enumerate}
Let us remark that if $S$ is a cycle, then in condition (O1) there may be
two choices for the path $Q$; otherwise $Q$ is unique.
In condition (O2) the existence of the path $P$ implies that $y$ and
$S$ belong to the same component of $F$, and hence $S$ includes at least
one vertex that is not smooth; therefore, the paths $Q_1,Q_2$ exist.
\end{definition}

\begin{lemma}
\label{lem:optframe}
Let $G$ be a graph with rings that is $2$-cell embedded 
in a surface $\Sigma$. If $F$ is a frame of $G$ with $|E(F)|$ minimum,
then $F$ is an optimal frame.
\end{lemma}

\begin{proof}
Let $F$ be as stated.
To prove that it satisfies (O1) let $S,x,y,P$ and $Q$ be as in that condition.
Then $|V(Q)| \le |V(P)|$, for otherwise replacing $P$ by $Q$ produces
a frame with fewer edges than $F$, a contradiction.

To prove that $F$ satisfies (O2) let $S,x,y,P,Q_1$ and $Q_2$ be as in (O2).
Now $P$ divides a simply connected face $f$ of $F$ into two faces $f_1$ and
$f_2$. When tracing the boundary of the face $f$ we encounter the
segment $S$ twice. 
We may assume that $f_1,f_2$ are numbered in such a way that when tracing
the boundary of $f_i$ starting on $P$ and moving toward $x$, we encounter
$Q_i$ next.
It follows that there is an index $i\in\{1,2\}$ such that when tracing 
the boundary of $f_i$ we encounter $Q_i$ twice and $Q_{3-i}$ once.
We deduce that by replacing $Q_{3-i}$ by $P$ we obtain a frame.
% with fewer edges, a contradiction.
\aa{%
The minimality of $F$ implies that $|V(P)|\ge|V(Q_{3-i})|$, as desired.
}%
\end{proof}

\subsection{Separations and growth rates}
%%%%%%%%%%%%%%%%%%%
\label{sec:sepgrow}

\begin{definition}
A \emph{separation} of a graph $G$ is a pair $(A,B)$ such that 
$A\cup B=V(G)$ and no edge of $G$ has one end in $A-B$ and the other in
$B-A$.
If $G$ is a graph with rings that is embedded in a surface $\Sigma$, then
we say that a separation $(A,B)$ of $G$ is \emph{flat} if 
there exists a closed disk $\Delta\subseteq\Sigma$ such that $\Delta$
includes $A$ and 
\zz{%
the interior of $\Delta$ includes $A-B$ and every edge of $G$ incident with a vertex of $A-B$
(regarded as a point set not including its ends).
%with both ends in $A$,
It follows that  no ring vertex belongs to  the interior of $\Delta$, and hence
$B$ includes all ring vertices.
}%
\end{definition}

If $G$ belongs to a hyperbolic family, then\zz{, as we show next,}
the same \zz{linear} bound holds for
closed curves that bound ``pinched" disks, or, equivalently, it
holds for \zz{flat} separations.
\xx{But first we need the following  lemma, which says that a flat separation 
is determined by the faces of a certain multigraph.}
%, as follows.

\xx{%
\begin{lemma}
\label{lem:sep2H}
Let $G$ be a connected  graph with rings $\R$ embedded in a surface  $\Sigma$,
%let $\Delta\subseteq\widehat\Sigma$ be a closed disk,
 let $(A,B)$ be a flat  separation of $G$,
and let $\Delta$ be as in the definition of flat separation. 
%such that $B$ includes all ring vertices of $G$,
%every vertex of $A$ belongs to $\Delta$, and every edge incident with a vertex of $A-B$
%s a subset of $\Delta$.
Then there exists a \aa{multi}graph $H$  such that
\begin{itemize}
\item $V(H)\subseteq A\cap B$,
\item $H$ is embedded in $\Delta$,
\item the point set of $H$ intersects the point set of $G$ in $V(H)$, 
%a subset of $A\cap B$,
\item every vertex of $H$ has positive even degree, 
\item every face of $H$ includes an edge of $G$, and 
\item all edges  of $G$ that belong to the same face
of $H$ either are all incident with a (possibly different) vertex of  $A-B$ or all have both ends in  $B$.
\end{itemize}
\end{lemma}
}

\begin{proof}
\xx{%
We proceed by induction on $|A\cap B|$.
If a vertex $v\in A\cap B$ has no neighbor in $A-B$, then we can remove $v$ from $A$
and apply induction. Similarly, if a non-ring  vertex $v\in A\cap B$ has no neighbor in $B$,
then we can remove $v$ from $B$ and apply induction. 
We may therefore assume that every vertex in $A\cap B$ has a neighbor in $A-B$,
and if it does not belong to a ring, then it also has a neighbor in $B$.
\qq{If $A-B=\emptyset$ or $B=\emptyset$, then the null graph satisfies the conclusion of the lemma.
We may therefore assume that $A-B\ne\emptyset$ and  $B\ne\emptyset$.
In particular, $A\cap B\ne\emptyset$.}
}

%{\bf Minor modifications needed below}

\xx{%
%Let $G'$  be the subgraph of $G$  obtained by deleting
%all edges with both ends in $A\cap B$.
We  construct the graph $H$ in two steps. In the first step we embed, for every vertex 
$v\in A\cap B$, a set of ``half-edges" incident with $v$.
In the second step we partition the set of all half-edges into pairs, and we merge each 
pair to form an edge of  $H$.
 To begin the first step  let $v\in A\cap B$, and let $e,f\in E(G)$ be
incident with $v$, consecutive in the cyclic ordering of edges incident with $v$ determined
by the embedding of $G$ in $\Sigma$, and such that  $e$ has its other end in $A-B$
and $f$ has its other end in $B$.
For every such pair $e,f$ we insert a half-edge incident with $v$ into the ``angle" between
$e$ and $f$ in such a way that the half-edge will be disjoint from $G$.
%If an edge $g$ of $G\backslash E(G')$ belongs to  the ``angle" between
%$e$ and $f$, then we insert the half-edge between $e$ and $g$.
We also insert a half-edge into the ``angle" between $e$ and $\Gamma$, if $\Gamma$
is a component of the boundary of $\Sigma$,  $v\in\Gamma$, the other end of $e$ belongs 
to  $A-B$, and no other edge of  $G$ is a subset of the ``angle" between $e$ and $\Gamma$.
%with the same proviso about edges of $G\backslash E(G')$.
This process results in the insertion of a positive even number of half-edges incident with $v$.
In the second step we look at an arbitrary face of $G$ and one of its  boundary walks $W$,
where a boundary walk is a sequence of vertices and edges or subsets of bd$(\Sigma)$ with 
the usual properties.
Let $Y$ be the set of half-edges inserted into \cc{the face} that are incident with a vertex of $W$.
The elements of $Y$ divide $W$ into subwalks, which we shall refer to as {\em segments}.
%Each segment is either a subset of  bd$(\Sigma)$ or it has at least one internal vertex, and for
For every segment $S$ either
\begin{itemize}
\item
$S$ has an internal vertex and 
 all its internal vertices belong to $A-B$, or
% and $S$ is disjoint from  bd$(\Sigma)$, or 
\item
%$S$ is a subset of  bd$(\Sigma)$ or 
all internal vertices of $S$ belong to $\cc{B}$,
\end{itemize}
and these types of segment alternate along $W$.
\cc{%
(To see that consider a  vertex $v\in A\cap B\cap V(W)$ such that we inserted an element of $Y$
incident with $v$.
It follows that $v$ is incident with an edge $e\in E(W)$ whose other end belongs to $A-B$,
and either an edge $f\in E(W)$ whose other end belongs to $B$ or a subset of the boundary of $\Sigma$.
Thus $e$ belongs to a segment of the first kind and $f$ or the subset of the boundary of $\Sigma$
belongs to a segment of the second kind. Since changes between segments only occur at vertices 
$v$ as above,  the statement follows.)}
It follows that $|Y|$ is even.
For every segment $S$ delimited by half-edges $h_1,h_2$ and whose internal vertices belong to 
$A-B$ we merge $h_1$ and $h_2$ into an edge of $H$, embedding the edge in the interior of $\Delta$ 
in a close proximity to~$S$.
This completes the  construction of $H$.
It follows from the construction that $H$ has the desired properties.
(For the last condition to hold we need that $G$ is connected.)
}
\end{proof}

\xx{%
We are now ready to prove that if $G$ belongs to a hyperbolic family, then
the same linear bound holds for
closed curves that bound ``pinched" disks, or, equivalently, it
holds for flat separations.
}

\begin{lemma}
\label{lem:linearsep}
Let $\F$ be a hyperbolic family of embedded graphs with rings, let $c$ be a
Cheeger constant for $\F$, let $G\in\F$ and let $(A,B)$ be a flat separation
of $G$. If $A-B\ne\emptyset$, then $|A-B|\le c(|A\cap B|-1)$.
\end{lemma}
\begin{proof}
%We proceed by induction on the number of pinched intersections...
We proceed by induction on $|A|+|A\cap B|$.
Let  $(A,B)$ be a flat separation of $G$ with $A-B\ne\emptyset$, and let $\Delta$ be
as in the definition of flat separation.%
\REM{Deleted: If a vertex $v\in A\cap B$ has no neighbor in $A\zz{-B}$, then we can remove $v$ from $A$
and apply induction, and similarly if  $v$ has no neighbor in $B\zz{-A}$.
We may therefore assume that every vertex in $A\cap B$ has a neighbor in both $A\zz{-B}$ and $B\zz{-A}$.}

\xx{Let $H$ be the multigraph as in Lemma~\ref{lem:sep2H}.
If $H$ is null, then $A=V(G)$, and  every edge of $G$ is incident with a vertex of $A-B$,
and hence $G$ is contained in $\Delta$, \cc{and the lemma follows from} Lemma~\ref{lem:onering}.
%vertex drawn on the boundary of $\Delta$ belongs to $B$.
%By applying the definition of hyperbolicity to a closed curve passing through $V(G)\cap\hbox{bd}(\Delta)$
%and running in the vicinity of bd$(\Delta)$ (to avoid edges with both ends in $B$) we deduce that the lemma holds.
%We may therefore assume
It follows that $H$ is not null.}
Every  cycle $C$ of $H$ bounds a closed disk $\Delta(C)\subseteq\Delta$.
Let us say that a cycle $C$ of $H$ is {\em maximal} if $C$ is not a subset of $\Delta (C')$ for a cycle $C'\ne C$ of $H$.
It follows that every vertex of $A\zz{-B}$ is contained in $\Delta(C)$ for some maximal cycle $C$ of $H$.
The maximal cycles form a tree structure; in particular, there exist a maximal cycle $C$ and a vertex $v\in A\cap B$
such that $C\setminus v$ is disjoint from all other maximal cycles.
Let $X$ be the set of all vertices of $G$ contained in the interior of $\Delta(C)$.
%, and let $A':=A-X$ and $B':=B\cup X$.
From the definition of hyperbolicity applied to a closed curve tracing $C$ we deduce that $|X|\le c(|V(C)|-1)$.
If $C$ is the unique maximal cycle, then 
$A-B\subseteq X$, and hence $|A-B|\le |X|\le c(|V(C)|-1)\le  c(|A\cap B|-1)$, as desired.
 We may therefore assume that $C$ is not the unique maximal cycle. 
Let $B':=B\cup X$ and let 
\zz{%
$A':=A-X-V(C)$ if $C$ is disjoint from all other maximal cycles, and $A':=A-X-(V(C)-\{v\})$ if it is not.
}%
By induction applied to the separation $(A',B')$ we conclude that 
$$|A-B|\le|A'-B'|+|X|\le c(|A'\cap B'|-1)+c(|V(C)|-1)\le c(|A\cap B|-1),$$
as desired.
\end{proof}

\zz{%
The next lemma says that every vertex on the ``flat" side of a flat separation 
is at most logarithmic distance away from the middle part of the separation.
}

\begin{lemma}
\label{lem:logdist}
Let $\F$ be a hyperbolic family of embedded graphs with rings, let $c$ be a
Cheeger constant for $\F$, let $G\in\F$ and let $(A,B)$ be a flat separation
of $G$. 
Then every vertex of $A-B$ is at distance at most $(2c+1)\log_2|A\cap B|$
from $A\cap B$.
\end{lemma}
\begin{proof}
We proceed by induction on $|A\cap B|$. If $|A\cap B|\le1$, 
then $A\subseteq B$ by Lemma~\ref{lem:linearsep}, and the lemma vacuously
holds.
Thus we may assume that $|A\cap B|\ge2$, and that the lemma holds for
separations $(A',B')$ with $|A'\cap B'|< |A\cap B|$.
Let $v$ be a vertex of $A-B$. 
For $i=0,1,\ldots$ let $C_i$ be the set of all vertices of $A$ at
distance exactly $i$ from $A\cap B$.
For $i=1,2,\ldots$ let $A_i := A\setminus \bigcup_{j=0}^{i-1} C_j$ and 
$B_i := B\cup \bigcup_{j=0}^{i} C_j$. 
Then $(A_i,B_i)$ is a flat separation and $A_i\cap B_i = C_i$. 
There exists an integer $i$ such that $1\le i\le 2c+1$ and 
$|C_i|\le |A\cap B|/2$,
for otherwise $|A-B|\ge|\bigcup_{1\le i\le 2c+1} C_i|\ge
\lfloor2c+1\rfloor|A\cap B|/2\ge c|A\cap B|$, contrary to 
Lemma~\ref{lem:linearsep}.
\aa{%
We may assume that $v\in A_i-B_i$, for otherwise $v$ satisfies the conclusion of the lemma.
}%
By induction
\aa{applied to the separation $(A_i,B_i)$ the vertex} $v$ is at distance at most 
$(2c+1)\log_2|C_i| \le (2c+1)\log_2 (|A\cap B|/2) = (2c+1)(\log_2|A\cap B|-1)$
from $C_i$. But $C_i$ is at distance at most $2c+1$ from $|A\cap B|$, 
and hence $v$ is at distance at most $(2c+1)\log_2|A\cap B|$
from $A\cap B$, as desired.
\end{proof}

\begin{corollary}
\label{cor:ringdist}
\aa{%
Let $\F$ be a hyperbolic family of embedded graphs with rings, let $c$ be a
Cheeger constant for $\F$, and let $G\in\F$
}%
%Let $G$ 
be a graph embedded in the disk with one ring $R$. 
%and let $c$ be a Cheeger constant for $G$.
Then every vertex of $G$ is at distance at most $(2c+1)\log_2|V(R)|$
from $R$.
\end{corollary}

\begin{proof}
This follows from Lemma~\ref{lem:logdist} applied to the separation
$(V(G),V(R))$.
\end{proof}

\xx{%
%We need the following lemma.
Next we will show that planar
neighborhoods in a hyperbolic family exhibit exponential growth, but first we need the
following lemma.
 The lemma formalizes the fact that
if a vertex $v$ is not near a short non-null-homotopic cycle, then the vertices up to a certain
distance from $v$ form a planar graph, indeed they lie inside a disk.
\begin{lemma}
\label{lem:disk}
Let $k\ge1$ be an integer, let   $G$ be  a graph with rings  embedded in a surface $\Sigma$, and let $v\in V(G)$.
If $v$ is at distance at least $k$ from every ring and there exists 
no non-null-homotopic cycle $C$ in $G$  of length at most $2k$ such that 
every vertex of $C$ is at distance at most $k$ from $v$,
then there exists a closed disk $\Delta\subseteq\Sigma$ such that 
\begin{itemize}
\item every vertex of $G$ at distance at most $k$ from $v$  belongs to $\Delta$,
\item every vertex of $G$ at distance at most $k-1$ from $v$  belongs to the interior of $\Delta$, and
\item every edge of $G$ incident with a vertex at distance at most $k-1$ from $v$  belongs to the interior of $\Delta$.
\end{itemize}
Furthermore, if there exists 
no non-null-homotopic cycle $C$ in $G$  of length at most $2k+1$ such that 
every vertex of $C$ is at distance at most $k$ from $v$,
then the closed disk $\Delta\subseteq\Sigma$ may be chosen so that
every edge of $G$ with both ends at distance at most $k$ from $v$  belongs to the interior of $\Delta$. 
\end{lemma}
\begin{proof}
Let $H$ be the subgraph of $G$ consisting of vertices at distance at most $k$ from $v$ and edges incident with a vertex at distance 
at most $k-1$ from $v$, and let $T$ be a breadth-first search spanning tree of $H$  \cc{rooted at $v$}. 
By hypothesis every fundamental cycle
of $T$ in $H$ bounds a disk. The union of these disks together with  $T$ is simply connected, and the required disk $\Delta$
can be obtained from this union by ``fattening" edges of $H$ and vertices of $H$ at distance at most $k-1$
from $v$ that belong to the boundary of this union.
This proves the first assertion, including the three bullet points. The second one follows analogously.
\end{proof}
}

\zz{%
We now show that planar neighborhoods in a hyperbolic family exhibit exponential growth.
}

\begin{lemma}
\label{lem:expgrowth}
Let $\F$ be a hyperbolic family of embedded graphs with rings, let $c$ be a
Cheeger constant for $\F$, let $G\in\F$ be embedded in a surface $\Sigma$, 
let $v\in V(G)$ and let $k\ge0$ be an integer.
%and let $H$ be the subgraph of $G$ induced by vertices at distance at most
%$k$ from a vertex $v\in V(G)$. 
\zz{%
If $v$ is at distance at least $k$ from every ring and there exists 
no non-null-homotopic cycle $C$ in $G$  of length at most $2k$ such that 
\xx{every vertex of $C$ is at distance at most $k$ from $v$,}
}%
then $G$ has at least $2^{k/(2c+1)}$ vertices at distance exactly $k$ from~$v$.
\end{lemma}

\begin{proof}
\zz{%
Let $A$ be the set of vertices of $G$ at distance at most $k$ from $v$ and let 
$B$ be the set of vertices of $G$ at distance at least $k$ from $v$.
\xx{By Lemma~\ref{lem:disk}} the assumptions of the lemma imply that there exists
a closed disk $\Delta\subseteq\Sigma$ such that 
\begin{itemize}
\item every vertex of $G$ at distance at most $k$ from $v$  belongs to $\Delta$,
\item every vertex of $G$ at distance at most $k-1$ from $v$  belongs to the interior of $\Delta$, and
\item every edge of $G$ incident with a vertex at distance at most $k-1$ from $v$  belongs to the interior of $\Delta$.
\end{itemize}
Thus $(A,B)$ is a flat separation of $G$, and hence 
 $k\le(2c+1)\log_2|A\cap B|$ by Lemma~\ref{lem:logdist}.
It follows that $|A\cap B|\ge2^{k/(2c+1)}$, as desired.
}%
\end{proof}

\subsection{Edge-width of hyperbolic families}
\label{sec:ew}

\zz{%
The next lemma says that every vertex that is at least logarithmic distance away from every ring 
is  contained in a non-null-homotopic cycle of at most logarithmic length.
}

%\newpage

\begin{lemma}
\label{lem:esscycle4linsize}
Let $\F$ be a hyperbolic family of embedded graphs with rings, let $c$ be a
Cheeger constant for $\F$, let $G\in\F$ be embedded in a surface $\Sigma$,
%of Euler genus $g$, 
let $k$ be an integer with $k\ge (2c+1)\log_2|V(G)|$,
%let $b$ be the number of vertices of $G$ that belong to a ring, 
%let $\alpha>0$ be such that $|V(G)|\le\alpha(g+b)$, 
and let $v\in V(G)$ be a vertex of $G$ at distance
at least $k$ from every ring of $G$.
Then $G$ has a non-null-homotopic cycle $C$ of length at most
$2k$ such that 
\xx{every vertex of $C$ is at distance at most $k$ from $v$.}
%$v\in V(C)$.
\end{lemma}

\begin{proof}
Since $\cal F$ is hyperbolic, it follows that $|V(G)|\ge2$, and hence
$k\ge1$.
We may assume for a contradiction that the required cycle does not exist.
%Then there exists a closed disk $\Delta$ that 
%\zz{%
%satisfies the hypotheses of Lemma~\ref{lem:expgrowth}.
%}% 
%vertices of $G$ at distance at most $k$ from $v$ and such that no vertex
%of a ring belongs to the interior of $\Delta$.
By Lemma~\ref{lem:expgrowth} there are at least $2^{k/(2c+1)} \ge |V(G)|$ 
vertices at distance exactly $k$ from $v$, a contradiction,
because those vertices do not include $v$, since $k\ge1$.
\end{proof}

%\begin{definition}
Let us recall that if $\Sigma$ is a surface with boundary, then $\widehat{\Sigma}$ denotes the 
surface without boundary obtained by
gluing a disk to each component of the boundary.
%\end{definition}

If $G$ is a graph with no rings embedded in a surface, then its edge-width
is usually defined as the maximum integer $k$ such that every non-null-homotopic
cycle in $G$ has length at least $k$.
We extend this definition to graphs with rings as follows.

\begin{definition}
Let $G$ be an embedded graph with rings that is embedded in a surface $\Sigma$,
and let $k\ge0$ be 
%We say that $G$ is embedded with \emph{edge-width} $k$ if $k$ is 
the maximum integer (or infinity) such that
%that the following conditions hold:
\begin{enumerate}
\item every  cycle in $G$ 
that is  non-null-homotopic in $\widehat{\Sigma}$ has at least $k$ edges that
do not belong to a ring, 
\item every non-null-homotopic (in $\Sigma$) cycle in $G$ that is \zz{disjoint} from all rings has length at least $k$, and
\item every two vertices that belong to distinct rings of $G$ are at distance
at least $k$ in $G$.
\end{enumerate}
In those circumstances we say that $G$ is embedded with \emph{edge-width} $k$.
\end{definition}

\zz{%
Our next lemma says  that if an embedded graph with rings belonging to a hyperbolic family
is embedded with moderately large edge-width (logarithmic in the number of vertices), then
it has a specific structure.
First we define the structure.
}

\begin{definition}
Let $G$ be a graph with rings $\R$ embedded in a surface $\Sigma$, and
let $\Gamma_1,\Gamma_2,\ldots,\Gamma_n$ be the components of
$\hbox{\rm bd}(\Sigma)$. 
We say that $G$ is \emph{locally cylindrical} if the components of $G$
can be numbered $G_1,G_2,\ldots,G_n$ in such a way that for all
$i=1,2,\ldots,n$ the graph $G_i$ includes the ring of $G$ that is
embedded in $\Gamma_i$, and there exists a disk 
$\Delta_i\subseteq\widehat\Sigma$ that includes every vertex and edge of $G_i$.
\end{definition}

\begin{lemma}
\label{lem:ewlinsize}
Let $\F$ be a hyperbolic family of embedded graphs with rings, let $c$ be a
Cheeger constant for $\F$, let $G\in\F$ be embedded in a surface $\Sigma$,
and let $k$ be an integer with $k\ge 2(2c+1)\log_2|V(G)|$.
%let $b$ be the number of vertices of $G$ that belong to a ring,
%let $\alpha>0$ be such that $|V(G)|\le\alpha(g+b)$, 
If $G$ is embedded with edge-width at least $k+\xx{3}$, then $G$ is 
locally cylindrical and every vertex is at distance at most \zz{$\lceil k/2\rceil$} from some ring.
\end{lemma}

\begin{proof}
Let $v\in V(G)$. If $v$ is at distance at least 
%$\lfloor k/2\rfloor+1\ge k/2$
\zz{%
$\lceil k/2\rceil+1$
}%
 from every ring,
then by Lemma~\ref{lem:esscycle4linsize} 
\zz{%
applied to the integer $\lceil k/2\rceil$
}%
there is a non-null-homotopic 
cycle in $G$ of length at most $\zz{2\lceil k/2\rceil\le k+1}$ 
\zz{%
%containing $v$.
\xx{such that every vertex of the cycle is at distance at most $\lceil k/2\rceil$ from $v$.}
Since $v$  is at distance at least $\lceil k/2\rceil+1$ from every ring,
it follows that the cycle 
}%
is disjoint from all rings,
contrary to the second condition in the definition of edge-width.
Thus every vertex of $G$ is at distance at most $\zz{\lceil k/2\rceil}$ from some ring.

Let $\Gamma_1,\Gamma_2,\ldots,\Gamma_n$ be the components of
$\hbox{\rm bd}(\Sigma)$, and for $i=1,2,\ldots,n$ let $G_i$ be the
subgraph of $G$ induced by  vertices of $G$ that are at distance 
at most $\zz{\lceil k/2\rceil}$ from the ring of $G$ that is drawn in $\Gamma_i$.
The third condition in the definition of edge-width implies that the
graphs $G_i$ are 
%pairwise disjoint. Since every vertex of $G$ is at distance at most $k/2$ from some ring,
%we deduce that $G_1,G_2,\ldots,G_n$ are 
the components of $G$.
The first condition in the definition of edge-width implies\xx{, by the second assertion of
Lemma~\ref{lem:disk} applied to the graph obtained from $G_i$ by contracting the ring
contained in $G_i$ to a vertex $v$, surface  $\widehat\Sigma$ and integer $\lceil k/2\rceil$,}  that
each $G_i$ is contained in a closed disk contained in $\widehat\Sigma$.
Thus $G$ is locally cylindrical, as desired.
\end{proof}

%\newpage

Let us remark that if $G$ is locally cylindrical, then each component
of $G$ may be regarded as a graph with one ring embedded in a disk,
and if this embedded graph belongs to the hyperbolic family $\cal F$,
then its number of vertices is bounded by a linear function of the
length of the ring by Lemma~\ref{lem:onering}.
%In particular, no member of a hyperbolic family is embedded in the sphere.

%The notion of edge-width is not very meaningful when $\Sigma$ is a
%sphere or a disk. However, no member of a hyperbolic family $\cal F$ of graphs
%with rings embeds in the sphere,
%and if $\Sigma$ is a disk, then every member of $\cal F$
%that is embedded in $\Sigma$ has 
%size bounded by a linear function of the length of the ring.

In what follows we will need the embedded  graph to be $2$-cell embedded.
If it is not, then there is an obvious way to simplify the surface, but we need
to know that the new embedded graph belongs to the same family.
Hence the following definition. 

\begin{definition}
\label{def:curvecut}
Let $\F$ be a family of embedded graphs with rings, let $G\in\F$ be embedded
in a surface  $\Sigma$ with rings $\R$, and let $\cc{\xi}:{\mathbb S}^1\to \Sigma$ 
be a non-null-homotopic simple closed curve disjoint from $G$.
Let $\Sigma'$ be obtained from $\Sigma$ by cutting $\Sigma$ open along $\cc{\xi}$
and pasting a disk or disks on the resulting one or two new boundary components.
Let $G'$ denote the graph $G$ when it is regarded as an embedded graph in $\Sigma'$
with rings $\R$.
If $G'\in\F$ for every $G\in\F$ and every $\cc{\xi}$ as above, then we say that
$\F$ is {\em closed under curve cutting}.
\end{definition}

\zz{%
In  the next  example we exhibit a hyperbolic family  such  that the family obtained from it by 
curve cutting is not  hyperbolic.
}

\begin{example}
\label{ex:curvecut}
%Family not closed under curve cutting
Let $k\ge3$ be a fixed integer, and for $n\ge3$ let $G_n$ denote the  Cartesian product of
a cycle of length $k$ and a path on $n$ vertices.
 We start by considering the obvious embedding of $G_n$ in the sphere ${\mathbb S}^2$.
Thus $G_n$ has two faces bounded by cycles $C_1,C_2$ of length $k$ and all other faces are
bounded by cycles of length four.
Let $\Sigma'$ be the surface obtained from ${\mathbb S}^2$ by removing the face bounded by $C_1$.
Thus $\Sigma'$ is a disk whose boundary is the point set of $C_1$.
Let $\Sigma$ be obtained from $\Sigma'$ by
inserting a crosscap inside the face bounded by $C_2$ (or, equivalently, removing  the face
bounded by $C_2$ and gluing a M\"obius band onto $C_2$).
Thus $\Sigma$ has Euler genus $1$ and one boundary component, namely the point set of $C_1$.
%We now regard $G_n$ as a graph with one ring $C_1$ embedded in $\Sigma$, and let
Let $\F$ be the family consisting of the embedded graphs $(G_n,\{C_1\},\Sigma)$ with one ring for all $n\ge3$.
Then $\F$ is  hyperbolic, but the family obtained from $\F$ by curve cutting is not,
 because it includes the embedded graphs $(G_n,\{C_1\},\Sigma')$.
\end{example}

\zz{%
The next theorem improves upon the length of the non-null-homotopic cycle guaranteed
by Lemma~\ref{lem:esscycle4linsize} from logarithmic to constant, assuming the graph is 
not too small.
The theorem will be a key tool in proving Theorem~\ref{thm:sleevedec1},
the main result of this section.
}

\begin{theorem}
\label{thm:ewconst}
Let $\F$ be a hyperbolic family of embedded graphs with rings,
let $c$ be a Cheeger constant for $\F$, 
let $G\in\F$ have $r$ rings 
%of total length $R$ 
and let it 
be embedded in a surface $\Sigma$ of genus $g$ with $h$ components,
where no component of $\Sigma$ is the sphere.
Let $R$ be the total number of ring vertices.
Assume that either $G$ is $2$-cell embedded, or that $\F$ is closed
under curve cutting.
Let $d:= \lceil 3(2c+1)\log_2(8c+4) \rceil$. 
If $|V(G)|> (c+1)(2R+(32d+10)r + 4(4d+1)(3g-2h))$, 
then there exists a non-null homotopic (in $\Sigma$) cycle that is 
disjoint from all of the rings and has length at most $2d$.
\end{theorem}

\begin{proof}
We begin by observing that
\begin{align*}
%\label{eq:1}
4(c+1)(2d+1) &\le 4(c+1)(6(2c+1)\log_2(8c+4)+3)) \le
4(c+1)(6(2c+1)(8c+4)/2+3) \\
&\le 4(c+1)15(2c+1)^2 \le 60(2c+1)^3\le (4(2c+1))^3 \le 2^{d/(2c+1)}.
\end{align*}

We proceed by induction on $g+R$. 
If $G$ is not $2$-cell embedded, then there exists a non-null-homotopic simple closed curve
$\cc{\xi}:{\mathbb S}^1\to \Sigma$ such that the image of ${\mathbb S}^1$ under $\cc{\xi}$
is disjoint from $G$. The theorem then follows by induction applied to the same graph $G$
embedded in the surface $\Sigma'$ as in the definition of curve cutting.
% obtained from $\Sigma$ by cutting open along $\gamma$ and pasting
%disk(s) onto the new boundary component(s). 
Thus we may assume that $G$ is $2$-cell embedded.

By Lemmas~\ref{lem:frameexists} and~\ref{lem:optframe} there exists 
an optimal frame $F$ of $G$. 
For every non-ring segment $S$ of $F$ we choose a maximal set $X(S)$ of
vertices of $S$ such that the vertices in $X(S)$ are pairwise at distance
in $S$ of at least $2d+1$ and each vertex of $X(S)$ is at distance in $S$ of at
least $2d+1$ from each end of $S$.
Thus $|X(S)|(2d+1)\ge |E(S)|-4d-1$.
Let $X$ be the union of $X(S)$ over all non-ring segments $S$ of $F$.
Thus $|X|(2d+1)\ge l-(4d+1)s$,
where $l$ is the sum of the lengths of all non-ring segments of $F$
and $s$ is their number.

For $x\in X$ let $B_x$ be the set of all vertices of $G$ at distance
at most $d$ from $x$.
Since $F$ is an optimal frame, it follows from the choice of the set
$X$ that the sets $B_x$ are pairwise disjoint and each of them is
disjoint from every ring.
\zz{%
Let $x\in X$.
If  \xx{the subgraph of $G$ induced by $B_x$} has a non-null homotopic  cycle that 
%includes $x$
 has length at most $2d$, then such a cycle is disjoint from all of the rings,
and therefore satisfies the conclusion of the lemma.
We may therefore assume that such cycle does not exist.
}%
%We may assume that each set $B_x$ is contained in a disk, for otherwise
%the conclusion of the lemma holds.
From Lemma~\ref{lem:expgrowth} we deduce that $|B_x|\ge 2^{d/(2c+1)}$.
Hence $|V(G)|\ge |X|2^{d/(2c+1)}$ as the sets $B_x$ are pairwise disjoint.

The boundary of the unique face of $F$ can be traced by a closed curve.
By pushing the curve slightly into the interior of the face of $F$
we obtain a closed curve $\phi$ that intersects $G$ only in vertices
and is otherwise contained in the unique face of $F$.
By applying the definition of hyperbolicity to $\phi$ we deduce that
$|V(G)|\le (c+1)(2l+R+r)$. 

Combining the upper and lower bounds on $|V(G)|$ gives
$$2^{d/(2c+1)}(l-(4d+1)s)/(2d+1)\le|X|2^{d/(2c+1)} \le |V(G)| \le 
(c+1)(2l+R+r).$$
Using the inequality observed at the beginning of this proof we obtain
$$
l-(4d+1)s\le (c+1)(2d+1)2^{-d/(2c+1)}(2l+R+r)\le (2l+R+r)/4,
$$
and hence $l\le (R+r)/2+2(4d+1)s$.
Substituting this bound into the inequality $|V(G)|\le (c+1)(2l+R+r)$ 
and using the fact that
$s\le3g+2r-2h$ by Lemma~\ref{lem:numbersegments} 
(which can be applied because no component of $\Sigma$ is the sphere)
% by Lemma~\ref{lem:onering})
we obtain
$$ |V(G)| \le (c+1)(2(R+r) + 4(4d+1)(3g+2r-2h)),$$
 a contradiction.
\end{proof}

\zz{%
The following corollary is an upgrade of Lemma~\ref{lem:ewlinsize}.
}

\begin{corollary}
\label{cor:ewloccyl}
Let $\F$ be a hyperbolic family of embedded graphs with rings,
let $\F$ be closed under curve cutting,
% of total length $R$, 
let $c$ be a Cheeger constant for $\F$, 
let $G\in\F$ have $r$ rings,
%of total length $R$ 
 let it be embedded in a surface $\Sigma$ of genus $g$, and let $R$ be the total number of ring vertices.
Let $d:=\lceil 3(2c+1)\log_2(8c+4)\rceil$ and $k:=\lceil2(2c+1)\log_2(g+R) + 4d\rceil$. 
If the edge-width of $G$ is at least $k+\xx{3}$, then $G$ is locally cylindrical 
and every vertex is at distance at most $(2c+1)\zz{\log_2}R$ from some ring.
\end{corollary}

\begin{proof}\xx{We begin by observing that
$$12(c+1)(4d+1)=4(c+1)3(4d+1)\le 4(c+1)15d\le 180(2c+1)^2\log_2(8c+4)+60(c+1)<$$
$$<(8c+4)^6,$$
and hence 
$$2(2c+1) \log_2(12(c+1)(4d+1))\le 12(2c+1) \log_2 (8c+4)\le4d.
$$} 
If a component of $\Sigma$ is a sphere, then it includes no vertex or edge of $G$
by Lemma~\ref{lem:onering}. By deleting such components we may assume that
no component of $\Sigma$ is a sphere.
We have $$|V(G)|\le (c+1)(2(R+r) + 4(4d+1)(3g+2r-2))\xx{\le 12(c+1)(4d+1)(g+R)},$$ for otherwise by
Theorem~\ref{thm:ewconst} the edge-width of $G$ is at most $2d<k$. 
%which is less than $k$, a contradiction. 
%So we may suppose $|V(G)|\le (c+1)(2(R+r) + 4(4d+1)(3g+2r-2)$. 
Thus $2(2c+1)\log_2|V(G)|<k$\xx{, using the inequality observed at the beginning of the proof.} 
Lemma~\ref{lem:ewlinsize}
implies that $G$ is locally cylindrical and every vertex of $G$ has distance 
at most $(2c+1)\zz{\log_2}R$ from some ring by Corollary~\ref{cor:ringdist}, as desired.
\end{proof}

\xx{%
We can now prove Theorem~\ref{thm:ew}.
\begin{proof}[Proof of Theorem~\ref{thm:ew}]
Let $\F$ be  a hyperbolic family of embedded graphs with no rings that is closed under curve cutting.
Thus no member of $\F$ is locally cylindrical, and hence every member of $\F$ has edge-width
$O(\log(g+1))$ by Corollary~\ref{cor:ewloccyl}, as desired.
\end{proof}
}

Our motivation for the study of hyperbolic families is to understand graphs that
contain no subgraph isomorphic to a member of the family.
The following is the first of several results along those lines. 
It is a more precise version of Theorem~\ref{thm:ew}.

\begin{corollary}
\label{cor:free1}
Let $\F$ be a hyperbolic family of embedded graphs with no rings,
let $\F$ be closed under curve cutting,
let $c$ be a Cheeger constant for $\F$, and 
let $H$ be a graph with no rings embedded in a surface $\Sigma$ of genus $g$.
Let $d:=\lceil 3(2c+1)\log_2(8c+4)\rceil$ and $k:=\lceil2(2c+1)\log_2g + 4d\rceil$. 
If the edge-width of $H$ is at least $k+\xx{3}$, then $H$ has no subgraph isomorphic
to a member of $\F$. 
\end{corollary}

\begin{proof}
If $H$ has a subgraph
isomorphic to a graph $G\in\F$, then 
\zz{%
$G$ has no rings, and hence is not locally cylindrical.
By Corollary~\ref{cor:ewloccyl} the graph $G$ has edge-width at most $k+\xx{2}$,
}%
and hence so does $H$, a contradiction.
\end{proof}

\subsection{Structure of hyperbolic families}
\label{subsec:structure}

Our objective is to prove Theorem~\ref{thm:sleevedec1}, but first we need the following notion.

\begin{definition}
\label{def:cyclecut}
Let $\F$ be a family of embedded graphs with rings, let $G\in\F$ be embedded
in a surface  $\Sigma$ with rings $\R$, and  let $C$ be a non-null-homotopic cycle in $G$.
Let $\Sigma'$ be the surface obtained
by cutting $\Sigma$ open along $C$. Thus $\Sigma'$ has one or two more boundary 
components than $\Sigma$, depending on whether $C$ is one-sided or two-sided.
Let $G'$ be the graph obtained from $G$ during this operation; thus if $C$ is one-sided,
then $C$ corresponds to a cycle $C'$ in $G'$ of twice the length of $C$, and if $C$ is
$2$-sided, then it corresponds to two cycles $C_1,C_2$ in $G'$, each of the same length as $C$.
If $C$ is one-sided, then let $\R':=\R\cup\{C'\}$; otherwise let $\R':=\R\cup\{C_1,C_2\}$.
We regard $G'$ as a graph embedded in $\Sigma'$ with rings $\R'$.
%Moreover, the new cycle or cycles will be declared to be rings of $G''$.
If $G'\in\F$ for every
$G\in\F$ and every non-null-homotopic cycle $C$ in $G$, then we say that
$\F$ is {\em closed under cycle cutting}.
%If $\F$ is closed under both curve cutting and cycle cutting, then we say that $\F$
%is {\em closed under cutting}.
\end{definition}

It should be noted that despite the similarity in names, curve cutting and cycle cutting play
different roles: while the next lemma shows that closure under cycle cutting can be obtained
for free, Example~\ref{ex:curvecut} shows that that is not the case for curve cutting.
On the other hand, all hyperbolic families of embedded graphs of interest to us are indeed
closed under curve cutting, and hence we make that assumption whenever it is convenient
to do so.

\begin{lemma}
\label{lem:cyclecut}
Let $\F$ be a hyperbolic family of embedded graphs with rings, let $c$ be a Cheeger
constant for $\F$, and let $\F'$ be the inclusion-wise minimal family of embedded
graphs that includes $\F$ and is closed under cycle cutting.
Then $\F'$ is hyperbolic with Cheeger constant $c$.
\end{lemma}

\begin{proof}
Let $G\in\F$, let $C$ be a non-null-homotopic cycle in $G$, and let $G',\Sigma'$ and
$\R'$ be obtained as in the definition of cycle cutting.
If $\cc{\xi}:{\mathbb S}^1\to\Sigma'$ is a simple closed curve that bounds an open disk
$\Delta\subseteq\Sigma'$, then $\Delta$ can be regarded as an open disk in $\Sigma$.
It follows that $\F'$ is hyperbolic with the same Cheeger constant.
\end{proof}

 In the next two definitions we formalize the concept of narrow cylinder and a graph being decomposed 
into a bounded size graph and narrow cylinders.

\begin{definition}
Let $k,l\ge0$ and let $G$ be a graph with two rings $R_1,R_2$
embedded in a cylinder $\Sigma$.
Let $C_0,C_1,C_2,\ldots,C_{n}$ be cycles in $G$ such that
the following conditions hold:
\begin{enumerate}
\item $C_0=R_1$ and $C_n=R_2$, 
\item the cycles $C_0,C_1,\ldots,C_n$ are pairwise disjoint,
\item for $i=1,2,\ldots,n-1$, the cycle $C_i$ separates $\Sigma$ into two
cylinders, one containing $C_0,\ldots,C_{i-1}$, and the other
containing $C_{i+1},\ldots,C_n$, 
\item for $i=0,1,\ldots,n$ the cycle $C_i$ has length at most $k$, and
\item for $i=1,\ldots,n$ at most $l$ vertices of $G$ belong to the
cylinder $\Lambda\subseteq\Sigma$ with boundary components $C_{i-1}$
and $C_i$.
\end{enumerate}
In those circumstances we say that $G$ is a \emph{$(k,l)$-sleeve}.
Let us remark in the last condition the cylinder $\Lambda$ includes
the vertices of $C_{i-1}\cup C_i$.
\end{definition}

\begin{definition}
Now let $G$ be a graph with rings embedded in a surface $\Sigma$, and
let $G'$ be a $(k,l)$-sleeve with rings $R_1'$ and $R_2'$ embedded in
a cylinder $\Sigma'$.
Let $R_1,R_2$ be distinct rings of $G$ of lengths $|V(R_1')|$ and $|V(R_2')|$,
respectively. Let $H$ be the graph obtained from $G\cup G'$ by identifying
$R_1$ with $R_1'$ and $R_2$ with $R_2'$ and let $\Pi$ be the surface
obtained from $\Sigma\cup\Sigma'$ by identifying the two boundary components
that embed $R_1$ and $R_1'$ and identifying the two boundary components
that embed $R_2$ and $R_2'$.
Thus $H$ is a graph with two fewer rings than $G$ and it is embedded
in $\Pi$. We say that $H$, regarded as an embedded graph with rings,
was \emph{obtained from $G$ by adjoining the $(k,l)$-sleeve $G'$.}

We say that a graph $G$ has a \emph{sleeve-decomposition with parameters
$(k,l,m)$} if it can be obtained from an embedded graph with rings on \xx{at most}
$m$ vertices by repeatedly adjoining $(k,l)$-sleeves.
\end{definition}

The following is an easy bound on the number of sleeves in a sleeve-decomposition.

\begin{lemma}
\label{lem:numsleeves}
Let $G$ be an embedded graph with rings that has a sleeve-decomposition with
parameters $(k,l,m)$. Then the number of sleeves adjoined in the decomposition 
is at most $m/6$.
\end{lemma}

\begin{proof}
By definition of sleeve-decomposition the graph $G$ is obtained from a graph $H$ on
at most $m$ vertices by adjoining $(k,l)$-sleeves.
Since the rings of $H$ are pairwise disjoint, the graph $H$ has at most $m/3$ rings that are cycles,
and every time a sleeve is adjoined, two of those cycles stop being rings.
The lemma follows.
\end{proof}

\zz{%
We are ready to prove the main result of this section.
It states  that a graph in a hyperbolic family has a  sleeve-decomposition with parameters $(k,l,m)$,
 where $k,l,m$ depend only on the genus $g$, the total number of ring vertices $R$, and the Cheeger constant $c$. 
Moreover, the number of sleeves is also bounded.
}

\begin{theorem}
\label{thm:sleevedec1}
Let $\F$ be a hyperbolic family of embedded graphs with rings,
let $\F$ be closed under curve cutting,
% of total length $R$, 
let $c$ be a Cheeger constant for $\F$, and
let $G\in\F$ be embedded in a connected surface $\Sigma$ of genus $g$ with $r$ rings,
% of total length $R$. 
and let $R$ be the total number of ring vertices.
Let $d:=\lceil3(2c+1)\log_2(8c+4)\rceil$,
$l:=4(c+1)(10d+3)$, and $b:=312d+94$.
Then $G$ has a sleeve-decomposition with parameters $(2d, l, m)$ that uses at most $s$ sleeves, where

\begin{enumerate}
\item[{\rm(1)}] $m = (c+1)R$ and $s=0$ if $g=0$ and $r=1$, 
\item[{\rm(2)}] $m = 2(c+1)(R-4d)+2l$ and $s=1$  if $g=0$ and $r=2$,
\item[{\rm(3)}] $m=(c+1)(2(R-4d) +b(r-2)+(44d+12+2b)g)$ and $s=3(2g+r-2)$ if $g\ge 1$ or $r\ge 3$.
\end{enumerate}
\end{theorem}

%{\bf adjust proof}

\begin{proof}
By Lemma~\ref{lem:cyclecut} we may assume that $\F$ is closed under cycle cutting as well as curve cutting.
We proceed by induction on $3g+r$. If $g=0$, then $r\ge 1$ by Lemma~\ref{lem:onering},
and if $g=0$ and $r=1$, then the same lemma implies that
%
%We may assume that $\Sigma$ has one component as the theorem follows by applying induction to each component.
the theorem holds with $m$ as in (1). 

We now handle the case $g=0$ and $r=2$. Let $\Gamma_1$ and $\Gamma_2$ be the boundary components of $\Sigma$. 
Let ${\cal D}=\{C_1, C_2, \ldots, C_t\}$ be a maximal collection of cycles of $G$ such that for each $i$, $1\le i \le t$, $C_i$ is a cycle of length at most $2d$ and $C_i$ separates $\Sigma$ into two cylinders, one containing $\Gamma_1, C_1,\ldots,C_{i-1}$, and the other containing $C_{i+1},\ldots,C_t,\Gamma_2$, and the cycles in $\cal C$ are pairwise disjoint and each is disjoint from $\Gamma_1$ and $\Gamma_2$. Let $\Lambda_1, \Lambda_2, \Lambda_3 \subseteq \Sigma$ be the cylinders with boundary components $\Gamma_1$, $C_1$; $C_1$, $C_t$; $C_t$, $\Gamma_2$, respectively. 

By Theorem~\ref{thm:ewconst} \cc{applied to $R=4d$}, it follows that if there are more than $l$ vertices between two cycles $C,C'$ of length at most $2d$ in a cylinder, then there exists a cycle of length at most $2d$ separating $C$ from $C'$ and disjoint from both. 
Hence, it follows that the subgraph of $G$ drawn in $\Lambda_2$ is a $(2d,l)$-sleeve. 
We may also assume that $|V(G)|>2(c+1)(R-4d)+2l$ as otherwise $G$ has the desired sleeve-decomposition (with no sleeves). 
Analogously by Theorem~\ref{thm:ewconst}, the subgraphs of $G$ drawn in $\Lambda_1$ and $\Lambda_3$ have at most $2(c+1)(R-4d)+2l$ vertices combined as $\D$ was chosen maximal. 
Thus $G$ has a $(2d,l,m)$ sleeve-decomposition with $m$ as in (2) and the theorem follows.
This completes the case $g=0$ and $r=2$.

We may assume then that $g\ge 1$ or $r\ge 3$ and we have to prove that the theorem holds with $m$ as in (3). 
Let $\Gamma$ be a component of $\hbox{\rm bd}(\Sigma)$.
We denote by $\Sigma+\widehat\Gamma$ the surface obtained from $\Sigma$
by gluing a disk on $\Gamma$.
We say that a cycle $C$ in $G$ is a {\em $\Gamma$-cycle} if it is non-null-homotopic
in $\Sigma$, but null-homotopic in $\Sigma+\widehat\Gamma$.

We first show that if $G$ has a non-null-homotopic cycle $C$ of length
at most $2d$ that is not a $\Gamma$-cycle for any component $\Gamma$ of
the boundary of $\Sigma$, then the theorem holds. To see that 
cut along $C$ and declare the resulting cycle(s) to be rings in a new graph $G'$
with $r'$ rings 
%of total length $R'$ 
embedded in a surface $\Sigma'$ of genus $g'$ with $h'$ components and a total number of $R'$ ring vertices.

Suppose $h'=2$. Then $r'=r+2$,  $R'\le R + 4d$ and  $g'=g$. Let $\Sigma_1$ and $\Sigma_2$ be the two components of $\Sigma'$. 
Let $G_i$ be the subgraph of $G'$ drawn in $\Sigma_i$. 
Then $G_i$ is a graph drawn in $\Sigma_i$ with $r_i$ rings 
%of total length $R_i$. 
and a total of $R_i$ ring vertices.
For each $i\in \{1,2\}$, either $g(\Sigma_i)\ge 1$ or $r_i\ge 3$ as $C$ is not a $\Gamma$-cycle. Thus, for each $i\in \{1,2\}$, by induction $G_i$ has a sleeve-decomposition with parameters $(2d,l,m_i)$ and at most $s_i$ sleeves, where 
$$m_i=(c+1)(2(R_i-4d) +b(r_i-2)+(44d+12+2b)g(\Sigma_i))$$ and $s_i = 3(2\cc{g(\Sigma_i)}+r_i-2)$. Adding, we find that 
\begin{align*}
m_1+m_2 &= (c+1)(2(R'-8d)+b(r'-4)+(44d+12+2b)g') \\
&\le (c+1)(2(R-4d)+b(r-2)+(44d+12+2b)g)=m,
\end{align*}
 and $s_1+s_2 = 3(2g'+r'-4) = 3(2g+r-2) = s$. Thus $G$ has a sleeve-decomposition with parameters $(2d,l,m)$ with at most $s$ sleeves, as desired.

So we may assume that $h'=1$. Suppose $C$ is one-sided, then $r'=r+1$, $R'\le R+4d$ and $g'= g-1$. Further suppose that $g'\ge 1$ or $r'\ge 3$. Then it follows from induction that $G'$ has a sleeve-decomposition with parameters $(2d,l, m')$ and at most $s'$ sleeves, where
\begin{align*}
m'&=(c+1)(2(R'-4d)+b(r'-2)+(44d+12+2b)g')\\ &\le (c+1)(2(R-4d)+b(r-2)+(44d+12+2b)g)=m
\end{align*}
 and
 $$s'=3(2g'+r'-2)=3(2(g-1)+(r+1)-2) = 3(2g+r-\zz{3}) \le s.$$
Thus $G$ has a sleeve-decomposition with parameters $(2d,l,m)$ and at most $s$ sleeves, as desired. So we may assume that $g'=0$ and $r'\le 2$. Hence $s'=0$ if $r'=1$ and $s'=1$ if $r'=2$. Note then that $g=1$. If $r'=1$, $|V(G')| \le(c+1)R'$ by Lemma~\ref{lem:onering}. Yet $r=0$ and thus $R'\le 4d$. Hence $|V(G)|\le |V(G')| \le (c+1)4d$ and $G$ has a sleeve-decomposition with parameters $(2d,l,(c+1)4d)$ and $s'=0=3(2(1)+0-2)=s$ sleeves, as desired since $g\ge 1$. If $r'=2$, then $G'$ has a sleeve-decomposition with parameters $(2d,l, 2(c+1)(R'-4d)+2l)$ and $s'=1$ by (2). Yet $r=1$ and $R'\le R+4d$. Hence $G$ has a sleeve-decomposition with parameters $(2d,l,2(c+1)R+2l)$ and at most $s=3(2(1)+1-2)=3$ sleeves, as desired, since $2(c+1)R+2l\le (c+1)(2(R-4d)+b(-1)+(44d+12+2b)(1))$.

So we may assume that $C$ is two-sided; then $r'=r+2$, $R'\le R+4d$  and $g'=g-2$. Further suppose that $g'\ge 1$ or $r'\ge 3$. Then it follows from induction that $G'$ has a sleeve-decomposition with parameters $(2d,l, m')$ and at most $s'$ sleeves, where 
\begin{align*}
m'&=(c+1)(2(R'-4d)+b(r'-2)+(44d+12+2b)g')\\ &\le (c+1)(2(R-4d)+b(r-2)+(44d+12+2b)g)=m
\end{align*}
 and $$s'=3(2g'+r'-2)=3(2(g-2)+(r+2)-2) = 3(2g+r-4)\le s.$$ Thus $G$ has a sleeve-decomposition with parameters $(2d,l,m)$ and at most $s$ sleeves as desired. So we may assume that $g'=0$ and $r'\le 2$. Hence $r'=2$, $s'=1$, $r=0$, $R=0$ and $g=2$. By induction using (2), $G'$ has a sleeve-decomposition with parameters $(2d,l, 2(c+1)(R'-4d)+2l)$ with at most $s'=1$ sleeve. Yet $R'\le 4d$. Hence $G$ has a sleeve-decomposition with parameters $(2d,l,2l)$ and at most $s=3(2(2)+0-2)=6$ sleeves, as desired, since 
$$2l = (c+1)(80d+24) \le (c+1)(2(0-4d) + b(0-2) + (44d+12+2b)(2)).$$

Thus we may assume that every  non-null-homotopic cycle $C$ in $G$ of length
at most $2d$ is a $\Gamma$-cycle for some component $\Gamma$ of
the boundary of $\Sigma$.
Let $\cal C$ be a maximal collection of cycles of $G$ such that each
member of $\cal C$ is a $\Gamma$-cycle of length at most $2d$ for some component $\Gamma$ of
the boundary of $\Sigma$, and the cycles in $\cal C$ are pairwise
disjoint and each is disjoint from every ring.

Let $C\in\cal C$ and let $\Gamma$ be the boundary component of $\Sigma$ such that $C$ is a $\Gamma$-cycle.
We say that $C$ is \emph{maximal} if the component of $\Sigma-C$ that
contains $\Gamma$ also contains every $\Gamma$-cycle included in ${\cal C}-\{C\}$.
It follows that if $\cal C$ includes a $\Gamma$-cycle, then it includes
a unique maximal $\Gamma$-cycle. Let $\Sigma'$ be the surface obtained from $\Sigma$ by removing,
for every maximal $\Gamma$-cycle $C\in\cal C$, the component of
$\Sigma-C$ that includes $\Gamma$.
Then $\Sigma'$ is a surface with boundary; it has the same number of
boundary components as $\Sigma$. 
Indeed, let $\Gamma$ be a component of the boundary of $\Sigma$.
If $\cal C$ includes no $\Gamma$-cycle, then $\Gamma$ is a component of
the boundary of $\Sigma'$; 
otherwise $C$ is a component of the boundary of $\Sigma'$, where
$C$ is the maximal $\Gamma$-cycle in $\cal C$.

Let $G'$ be the subgraph of $G$ consisting of vertices and edges drawn
in $\Sigma'$. We will regard $G'$ as a graph with rings embedded in $\Sigma'$. By Theorem~\ref{thm:ewconst}, 
$$|V(G')| < (c+1)(2R'+(32d+10)r-(32d+8)+(48d+12)g).$$

Let $\Gamma_1, \Gamma_2, \ldots \Gamma_{s'}$ be all the boundary components of $\Sigma$ such that $\C$ contains a $\Gamma_i$-cycle. Note that $s'\le r$. Let $G_i$ be the graph embedded in the cylinder $\Lambda_i\subseteq\Sigma$ with boundary components $\Gamma_i$ and $C_i$ where $C_i$ is the unique maximal $\Gamma_i$-cycle. We regard $G_i$ as a graph embedded in the 
\qq{cylinder} with two rings $\Gamma_i$ and $C_i$. By induction using (2), $G_i$ has a sleeve-decomposition with parameters $(2d,l,m_i)$ with at most $1$ sleeve where $m_i \le 2(c+1)(|\Gamma_i\cap V(G)|-2d)+2l$ as $|C_i|\le 2d$.

Thus $G$ has a sleeve decomposition with parameters $(2d,l, |V(G')|+\sum_{i=1}^{s'} m_i)$ and at most $r$ sleeves. Note that 
\begin{align*}
|V(G')|+\sum_{i=1}^{s'} m_i &\le (c+1)(2R+(32d+10+2l/(c+1))r-32d-8+(48d+12)g)\\ &= (c+1)(2R + (112d+34)r-(32d+8)+(48d+12)g).
\end{align*}

First suppose $r\ge 3$. Then $|V(G')|+\sum_{i=1}^{s'} m_i$ is at most $$(c+1)(2(R-4d) + b(r-2)+(44d+12+2b)g)=m$$ as
 $b\ge (112d+34)3-32d-8+8d = 312d + 94$. Furthermore, $r\le 3r-6 \le 3(2g+r-2)=s$ and so $G$ has a sleeve decomposition with parameters $(2d,l,m)$ and at most $s$ sleeves, as desired. 

So we may assume that $r\le 2$. But then $g\ge 1$ by assumption. In this case, $|V(G')|+\sum_{i=1}^{s'} m_i\le m$ since 
\begin{align*}
&(c+1)(2R + (112d+34)r-(32d+8)+(48d+12)g)\\ &\le (c+1)(2(R-4d) + b(r-2)+(44d+12+2b)g)
\end{align*}
 as $b=312d+94$ and $g\ge 1$. Furthermore, $r\le 2g+r-2$ since $g\ge 1$ and hence $r\le 3(2g+r-2)=s$ and so $G$ has a sleeve decomposition with parameters $(2d,l,m)$ and at most $s$ sleeves, as desired.
%
% we get $(2l/(c+1) + 32d+10)3-32d-8+8d \le b$ when $r=3$. Choose this for $b$ and it will work for all $r\ge 3$. If $r\le 2$, use big $f$ to pay for the middle term: $-32d-8 + e + 48d+12 \le f$.
\end{proof}

By  using induction when $\Sigma$ is not connected
 we obtain the following easier-to-state bound.
%We omit the straightforward proof.

\begin{theorem}
\label{thm:sleevedec2}
Let $\F$ be a hyperbolic family of embedded graphs with rings,
let $\F$ be closed under curve cutting,
let $c$ be a Cheeger constant for $\F$, 
let $G\in\F$ be embedded in a surface $\Sigma$ of genus $g$ with $r$ rings and a total number of $R$ ring vertices.
%of total length $R$. 
Let $d:=\lceil3(2c+1)\log_2(8c+4)\rceil$ and $l:=4(c+1)(10d+3)$.
Then $G$ has a sleeve-decomposition with parameters $(2d, l, \zz{702}d(c+1)(g+R))$, and the decomposition uses
at most $3(2g+r)$ sleeves.
\end{theorem}

\zz{%
\begin{proof}
This follows \xx{from Theorem~\ref{thm:sleevedec1}} applied to the components  of $\Sigma$, using the fact that $d\ge6$.
\end{proof}
}

%{\bf Chat about linear extendability}

We deduce the following consequence in the spirit of Theorems~\ref{thm:linext5choose} and~\ref{thm:linext},
\zz{%
but we need a definition first.
}

\begin{definition}
\label{def:free}
For $i=1,2$ let $G_i$ be a graph with rings $\R_i$ embedded in a surface $\Sigma_i$.
We say that the embedded graphs with rings $G_1$ and $G_2$ are {\em isomorphic}
if there exists an isomorphism $\phi$ of $G_1$ and $G_2$ as abstract graphs and
a bijection $f:\R_1\to\R_2$ such that for every $R\in\R_1$ the restriction of $\phi$
to $R$ is an isomorphism of $R$ and $f(R)$, and $\phi$ extends to a homeomorphism
$\Sigma_1\to\Sigma_2$.

Let $G$ be a graph with rings embedded in a surface $\Sigma$. By a {\em subgraph} of $G$ we mean
every embedded graph with rings that can be obtained from $G$ by repeatedly applying these operations:
\begin{itemize}
\item deleting a vertex or edge not belonging to any ring, and
\item deleting all vertices and edges of a ring $R$,  removing $R$ from  the list of rings,
\zz{%
and attaching a disk to the boundary component of  $\Sigma$ containing $R$.
}%
\end{itemize}
Let $\F$ be a family of embedded graphs with rings, and let $G$ be an embedded graph
with rings. We say that $G$ is {\em $\F$-free} if no subgraph of $G$ is isomorphic to
a member of $\F$.
\end{definition}

\qq{%
\begin{theorem}
Let $\F$ be a hyperbolic family of embedded graphs with rings,
let $c$ be a Cheeger constant for $\F$, and let $d:=\lceil3(2c+1)\log_2(8c+4)\rceil$.
If $G$ is a graph with no rings 
%of total length $R$ 
embedded in a surface $\Sigma$ of Euler  genus $g$,
then there exists a set $X\subseteq V(G)$ of size at most $51d(c+1)g$ such that 
%$X$ includes all vertices of all rings and for every component $H$ of $G\backslash X$ there exists
%a cylinder $\Lambda\subseteq \Sigma$ such that $\Lambda$ includes every vertex and every
%edge of $H$.
%In particular, every component of $G\backslash X$ is planar, and hence 
$G\backslash X$ is $\F$-free. 
\end{theorem}
\begin{proof}
We show by induction on $g$ that there exists a required set $X$ of size at most $49d(c+1)g+2dm$,
where $m$ is the maximum number of pairwise disjoint pairwise non-homotopic non-null-homotopic 
separating cycles in $G$ of length at most $2d$.  
Since $m\le g$, this implies the theorem.
If $g=0$, then $G$ is $\F$-free by  Lemma~\ref{lem:onering},
and so we may assume that $g\ge1$ and that the theorem holds for all graphs embedded in
surfaces of Euler genus at most $g-1$.
If $G$ is $\F$-free, then $X=\emptyset$ satisfies the conclusion of the theorem, and so we may assume that 
$G$ has a subgraph isomorphic to $H\in\F$.
By Theorem~\ref{thm:ewconst} either $|V(H)|\le49d(c+1)g'$, where $g'$ is the combinatorial genus of $H$,
or $H$ (and hence $G$) has
a non-null-homotopic cycle of length at most $2d$.
In the former case let $Y:=V(H)$ and in the latter case let $Y:=V(C)$.
Let $g''$ be the combinatorial genus of  $G\backslash Y$.
We apply induction to the embedded graph $G\backslash Y$.
We may do so, because in the former case  $g'\ge1$ by  Lemma~\ref{lem:onering},
and in the latter case either $g''<g$ (if $C$ is non-separating), or $C$ separates $\Sigma$
into two surfaces of genera strictly smaller than $g$.
By induction  $G\backslash Y$ has a set $Z$ of size at most   $49d(c+1)g''+2dm''$
such that $G\backslash Y\backslash Z$ is $\F$-free, where 
$m''$ is the maximum number of pairwise disjoint pairwise non-homotopic non-null-homotopic 
separating cycles in $G\backslash Y$ of length at most $2d$.
Then in the former case $g'+g''\le g$ by Lemma~\ref{lem:genusadd}, 
and hence the theorem follows.
\ppar
So we may assume that the latter case holds. If $C$ does not separate the surface, then $g''<g$,
and the result follows. Finally, if $C$ separates the surface, then $g''=g$ and $m''<m$, and, again,
the result follows.
%By Theorem~\ref{thm:sleevedec2}, $G$ has a sleeve-decomposition with parameters $(2d, l, \zz{702}d(c+1)(g+R))$. 
%That is, $G$ can be obtained from an embedded graph $G'$ on at most $\zz{702}d(c+1)(g+R)$ vertices by adjoining $(2d,l)$-sleeves. The result follows by letting $X=V(G')$.
%\zz{%
%The graph $G\backslash X$ has no rings, and hence is $\F$-free by Lemma~\ref{lem:onering}.
%}%
\end{proof}
}

%%%%%%%%%%%%%%%%%%%%%%%%%%%%%%%%%%%%%%%%%%%%%%%%%%%%%%%%
\subsection{From local freedom to global freedom}
\label{sec:local2global}

In this section we prove a generalization of Corollary~\ref{cor:free1}.
%\zz{%
%Let us recall that $\F$-freedom was defined in Definition~\ref{def:free}.
%}

%\begin{definition}
%\label{def:free}
%For $=1,2$ let $G_i$ be a graph with rings $\R_i$ embedded in a surface $\Sigma_i$.
%We say that the embedded graphs with rings $G_1$ and $G_2$ are {\em isomorphic}
%if there exists an isomorphism $\phi$ of $G_1$ and $G_2$ as abstract graphs and
%a bijection $f:\R_1\to\R_2$ such that for every $R\in\R_1$ the restriction of $\phi$
%to $R$ is an isomorphism of $R$ and $f(R)$, and $\phi$ extends to a homeomorphism
%$\Sigma_1\to\Sigma_2$.
%
%Let $G$ be an embedded graph with rings. By a {\em subgraph} of $G$ we mean
%every embedded graph with rings that can be obtained from $G$ by repeatedly applying these operations:
%\begin{itemize}
%\item deleting a vertex or edge not belonging to any ring, and
%\item deleting all vertices and edges of a ring, and removing that ring from  the list of rings.
%\end{itemize}
%
%Let $\F$ be a family of embedded graphs with rings, and let $G$ be an embedded graph
%with rings. We say that $G$ is {\em $\F$-free} if no subgraph of $G$ is isomorphic to
%a member of $\F$.
%\end{definition}

\begin{definition}
Let $\F$ be a hyperbolic family of embedded graphs with rings, and let $c$ be a Cheeger
constant for $\F$.
Let $H$ be an embedded graph with rings $\R$.
Let $H'$ be the subgraph of $H$ induced by vertices $v\in V(H)$ such that there is a ring
$R\in\R$ such that $v$ is at distance at most $(\xx{2}c+1)\log_2|V(R)|$ from $R$.
If $H'$ is $\F$-free, then we say that $H$ is {\em locally $\F$-free}.
Let us remark that the definition depends on the choice of the Cheeger constant, which will
be implicit whenever we will use this notion.
\end{definition}

The following is a generalization of Corollary~\ref{cor:free1} to graphs with rings.

\begin{corollary}
\label{cor:free2}
Let $\F$ be a hyperbolic family of embedded graphs with  rings,
let $\F$ be closed under curve cutting,
let $c$ be a Cheeger constant for $\F$, and 
let $H$ be a graph with rings and a total of $R$ ring vertices embedded in a surface $\Sigma$ of genus $g$.
Let $d:=\lceil 3(2c+1)\log_2(8c+4)\rceil$ and $k:=\lceil2(2c+1)\log_2(g+R) + 4d\rceil$. 
If the edge-width of $H$ is at least $k+\xx{3}$ and $H$ is locally $\F$-free, then $H$ is $\F$-free. 
\end{corollary}

\begin{proof}
If $H$ has a subgraph
isomorphic to a graph $G\in\F$, then by Corollary~\ref{cor:ewloccyl} either $G$ has edge-width 
at most $k\xx{+2}$, or $G$ is locally cylindrical.
But $G$ has edge-width at least $k+\xx{3}$, because so does $H$, and hence $G$ is locally
cylindrical.
\zz{%
Let $H'$ be as in the definition of local $\F$-freedom.
}%
By Corollary~\ref{cor:ringdist} applied to each component of $G$ (which we may,
because $\F$ is closed under curve cutting) we deduce that $\zz{H'}$ has a subgraph
isomorphic to $G$, and hence \zz{$H$} is not \zz{locally} $\F$-free, as desired.
\end{proof}

%{\bf Algorithm}

%%%%%%%%%%%%%%%%%%%%%%%%%%%%%%%%%%%%%%%%%%%%%%%%%%%%%%%%%%

%%%%%%%%%%%%%%%%%%%%%%%%%%%%%%%%%%%%%%%%%%%%%%%%%%%%%%%%%%
%%%%%%%%%%%%%%%%%%%%%%%%%%%%%%%%%%%%%%%%%%%%%%%%%%%%%%%%%%
\section{Strongly Hyperbolic Families}
\label{sec:strongly}

Let $\F$ be a hyperbolic family of embedded graphs, and let $\Sigma$ and $R$ be fixed.
Are there arbitrarily large members of $\F$ that are embedded in $\Sigma$ with a total of $R$ ring vertices?
Theorem~\ref{thm:sleevedec1} implies that the answer depends on whether there exist arbitrarily large  sleeves.
That motivates the following definition.

\begin{definition}
Let $\F$ be a hyperbolic family of embedded graphs with rings, let $c$ be a Cheeger constant for $\F$, and
let $d:=\lceil3(2c+1)\log_2(8c+4)\rceil$.
We say that $\F$ is {\em strongly hyperbolic} if there exists a constant $c_2$ such that 
for every $G\in\F$ embedded in a surface $\Sigma$ with rings $\R$ and for every two disjoint cycles $C_1,C_2$ of length at
most $2d$ in $G$, if there exists a cylinder $\Lambda\subseteq\Sigma$ with boundary components $C_1$ and $C_2$,
then $\Lambda$ includes at most $c_2$ vertices of $G$.
We say that $c_2$ is a {\em strong hyperbolic constant} for $\F$.
\end{definition}

\zz{%
We can now answer the question posed at the beginning of this section.
}%

\begin{theorem}
\label{thm:stronghyp}
Let $\F$ be a strongly hyperbolic family of embedded graphs with rings,
let $\F$ be closed under curve cutting,
let $c$ be a Cheeger constant and $c_2$ a strong hyperbolic constant for $\F$, and
let $G\in\F$ be embedded in a surface $\Sigma$ of genus $g$ with a total of $R$ ring vertices. 
Let $d:=\lceil3(2c+1)\log_2(8c+4)\rceil$.
% and $l:=4(c+1)(10d+3)$.
Then $G$ has at most $(\zz{702}d(c+1)+6c_2)(g+R)$ vertices.
%$G$ has a sleeve-decomposition with parameters $(2d, l, 872d(c+1)(g+R))$, and the decomposition uses
%at most $146d(c+1)(g+R)$ sleeves.
\end{theorem}

\begin{proof}
Let $l:=4(c+1)(10d+3)$ and $m:= \zz{702}d(c+1)(g+R)$.
By Theorem~\ref{thm:sleevedec2} the graph $G$ can be obtained from a graph on at most $m$ vertices by adjoining
at most $3(2g+r)$ $(2d,l)$-sleeves. But every sleeve has at most $c_2$ vertices by the definition of strongly hyperbolic family,
and hence the theorem follows.
\end{proof}

In the definition of strong hyperbolicity we imposed a constant bound on the number of vertices in cylinders with boundary 
components of bounded length. As the next theorem shows, it follows that this implies a linear bound for cylinders with boundaries of any length.

\begin{theorem}\label{thm:cylconst}
Let $\F$ be a strongly hyperbolic family of embedded graphs with rings,
let $\F$ be closed under curve cutting,
let $c$ be a Cheeger constant and $c_2$ a strong hyperbolic constant for $\F$, and
let $G\in\F$ be a graph with rings embedded in a surface $\Sigma$. 
Let $d:=\lceil3(2c+1)\log_2(8c+4)\rceil$.
% and $l:=4(c+1)(10d+3)$.
Let $C_1,C_2$ be disjoint cycles in $G$ such that there exists a cylinder $\Lambda\subseteq\Sigma$
with boundary components $C_1,C_2$, and let $R:=|V(C_1)|+|V(C_2)|$.
Then $\Lambda$ includes at most $\zz{(702d(c+1)+6c_2)R}$ vertices.
\end{theorem}

\begin{proof}
Let $H$ be the subgraph of $G$ consisting of all vertices and edges drawn in $\Lambda$, 
and let $H$ be regarded as
a graph embedded in $\Lambda$ with rings $C_1$ and $C_2$.
By Lemma~\ref{lem:cyclecut} the graph $H$ satisfies the hyperbolicity condition with Cheeger constant $c$,
\zz{%
and by the same argument it satisfies the strong  hyperbolicity condition with strong hyperbolic  constant $c_2$.
It follows that $H$
}%
has at most $\zz{(702d(c+1)+6c_2)R}$ vertices by Theorem~\ref{thm:stronghyp}, as desired.
\end{proof}

%%%%%%%%%%%%%%%%%%%%%%%%%%%%%%%%%%%%%%%%%%%%%%%%%%%%%%%
\subsection{Examples}

The following theorem is proved  in~\cite{PosPhD}.
The proof is long and its journal version will be split into multiple papers,
\cc{the first three of which are~\cite{PosThoTwotwo,PosThoLinDisk,PosThoTwoOne}.}

\begin{theorem}
\label{thm:cylcrit}
For every integer $k$ there exists an integer $K$ such that the following holds.
Let $G$ be a graph embedded in a cylinder with rings $C_1$ and $C_2$, where $|V(C_1)|+|V(C_2)|\le k$,
let $L$ be a $5$-list  assignment for $G$,
%$L=(L(v):v\in V(G))$ be  a collection of lists, each of size at least five, 
and assume that $G$ is $C_1\cup C_2$-critical with respect to $L$.
Then $G$ has at most $K$ vertices.
\end{theorem}

\begin{lemma}
\label{lem:5liststrhyp}
Let $\F$ be the family of embedded graphs with rings from Theorem~\ref{thm:5listcritishyperbolic}.
Then $\F$ is strongly hyperbolic.
\end{lemma}

\begin{proof}
The family $\F$ is hyperbolic by Theorem~\ref{thm:5listcritishyperbolic} and
strongly hyperbolic by Theorem~\ref{thm:cylcrit}.
\end{proof}

\begin{lemma}
\label{lem:g4l4strong}
Let  $\F$ be the family from  Lemma~\ref{lem:g4l4}; that is, the family
of all embedded graphs $G$ with rings $\R$ such that \cc{no triangle is} null-homotopic and
$G$ is $\R$-critical with respect to some $4$-list assignment.
% $L=(L(v):v\in V(G))$ such that $|L(v)|\ge4$ for all $v\in V(G)$.
Then $\F$ is strongly hyperbolic.
% by Lemma~\ref{lem:44}.
\end{lemma}

\begin{proof}
This follows from Lemma~\ref{lem:44}
\zz{%
in the same way as Lemma~\ref{lem:g4l4},
the only difference being that if $X$ is as in the proof of Lemma~\ref{lem:g4l4},
then the subgraph of $G$ induced by $X$  is a cycle with at most one chord.
The rest of the argument goes through without any changes.
}%
\end{proof}

\zz{%
The analog of Theorem~\ref{thm:cylcrit} for graphs of girth five and $3$-list assignments is shown 
in~\cite[\cc{Theorem~1.8}]{PostleGirth5}.
\begin{theorem}
\label{thm:g5cylcrit}
%For every integer $k$ there exists an integer $K$ such that the following holds.
Let $G$ be a graph of girth at least five embedded in a cylinder with rings $C_1$ and $C_2$, 
%where $|V(C_1)|+|V(C_2)|\le k$,
let $L$ be a $3$-list  assignment for $G$,
%$L=(L(v):v\in V(G))$ be  a collection of lists, each of size at least five, 
and assume that $G$ is $C_1\cup C_2$-critical with respect to $L$.
Then $G$ has at most \cc{$177(|V(C_1)|+|V(C_2)|)$} vertices.
\end{theorem}
We deduce the following lemma.
}%

\begin{lemma}
\label{lem:g5strong}
Let $\F$ be the family 
%from Lemma~\ref{lem:g5hyper}; that is, the family
of embedded graphs $G$ with rings $\R$ such that
every cycle in $G$ of length at most four is \zz{equal to a ring} and $G$ is
$\R$-critical with respect to some $3$-list  assignment.
% $L=(L(v):v\in V(G))$ such that $|L(v)|\ge3$ for all $v\in V(G)$.
Then $\F$ is strongly hyperbolic.
% by~\cite[Theorem~5]{DvoKawChoosegirth5}, and strongly hyperbolic by~\cite{PostleGirth5}.
\end{lemma}

\zz{%
\begin{proof}
The family $\F$ is hyperbolic by Lemma~\ref{lem:g5hyper} and
strongly hyperbolic by Theorem~\ref{thm:g5cylcrit}.
\end{proof}
}%

\zz{%
Let us point out a subtlety: the family of Lemma~\ref{lem:g5strong} is a proper subfamily
of   the family of Lemma~\ref{lem:g5hyper}.
In fact, the latter family is not strongly hyperbolic:
}%

\zz{%
\begin{lemma}
Let $\F$ be the family 
from Lemma~\ref{lem:g5hyper}; that is, the family
of embedded graphs $G$ with rings $\R$ such that
every cycle in $G$ of length at most four is not null-homotopic and $G$ is
$\R$-critical with respect to some $3$-list  assignment.
Then $\F$ is not strongly hyperbolic.
\end{lemma}
}%

\zz{%
\begin{proof}
This follows from Figure~4 and the discussion following Lemma~4.6 of~\cite{DvoKraTho3}.
\end{proof}
}%

The strong hyperbolicity of the family in the next example follows from Theorem~\ref{thm:Cisgood-2}.

\begin{example}
\label{ExpHyp}
There exists $\delta>0$ such that following holds. For all $\epsilon>0$ with $\epsilon\le \delta$ and $\alpha\ge 0$,
 the family of graphs that are $(\epsilon,\alpha)$-exponentially-critical with respect to a \zz{type $345$} list assignment
  is a strongly hyperbolic family. Moreover, the Cheeger constant and the strong hyperbolic constant do not depend on $\alpha$ or $\epsilon$.
\end{example}

%\begin{proof}
%This follows from Theorems~\ref{ExpCritDisc} and~\ref{ExpCritCyl}.
%\end{proof}

%{\bf Give a reason why}

%\begin{example}
%\label{ex:g5expstrong}
%Let $\epsilon>0$ and $\alpha\ge0$, and let $\F$ be the family from Lemma~\ref{lem:g5exphyper}.
%That is, $\F$ is the family of embedded graphs $G$ with rings $\R$  such that 
%every cycle of length at most four is not null-homotopic and $G$
%  is $(\epsilon,\alpha)$-exponentially-critical with respect to some $3$-list assignment. 
%%By \cite[Theorem 6.3]{PostleGirth5}, If $\epsilon<1/80000$, 
%If $\epsilon$ is sufficiently small, then the family $\F$ is strongly hyperbolic
%with Cheeger and strong hyperbolic constants that are independent of $\epsilon$ and $\alpha$. 
%\end{example}

\xx{%
\subsection{The structure of strongly hyperbolic families}}

%Let $G$ be a graph embedded in a surface $\Sigma$, and let $\Sigma'$ be the surface obtained
%by attaching a disk to every facial walk of the embedded graph  $G$.
%Then $G$ is $2$-cell embedded in $\Sigma'$, and we say that the Euler genus of $\Sigma'$
%is the {\em combinatorial genus} of the embedded graph $G$. 
%The following is \aa{a preparation for}
%our main result about strongly hyperbolic families of embedded graphs.
Let us recall that the combinatorial genus of an embedded graph was defined prior to Lemma~\ref{lem:genusadd}.
The following is our main result about strongly hyperbolic families of embedded graphs.

\begin{theorem}
\label{thm:free5}
Let $\F$ be a strongly hyperbolic family of embedded graphs with  rings,
let $\F$ be closed under curve cutting,
let $c$ be a Cheeger constant for $\F$, let $c_2$ be a strong hyperbolic constant for $\F$, and 
let $G\in\F$ be a graph with rings $\R$ and a total number of $R$ ring vertices
embedded in a surface $\Sigma$ of Euler genus $g$.
Let $d:=\lceil 3(2c+1)\log_2(8c+4)\rceil$, 
$k:=\lceil2(2c+1)\log_2g + 4d\rceil$,
$\beta:=\zz{702}d(c+1)+6c_2$,
%$l:=\lceil(2c+1)\log_2(2\beta(g+\sum_{C\in\R_1}|V(C)|))\rceil$,
 and for a ring $C\in\R$ let 
%$$l_C:=\lceil(4\beta+1)\log_2(2\beta|V(C)|)+2\beta g\rceil$$.
$\xx{l_C:=\lceil2\beta|V(C)|\rceil}$.
Then $G$ has at most $\beta(g+R)$ vertices and
%if $G$ is connected, then 
\xx{either}
\begin{itemize}
\item[{\rm(a)}]
there exist distinct rings $C_1,C_2\in\R$ such that the distance in $G$ between them is
at most \xx{$l_{C_1}+l_{C_2}-1$}, or
\end{itemize}
\xx{%
for every  component $G'$ of  $G$ 
one of the following conditions holds:}
\begin{itemize}
\item [{\rm(b)}]
%$\R=\emptyset$ and
\xx{$G'$  contains no rings  and $G'$ has a}
non-null-homotopic cycle  in $\xx{\widehat\Sigma}$ of length at most $k\xx{+2}$, or
%there exists a cycle $C$ in $G$ that  is non-null-homotopic in $\widehat\Sigma$ such that
% $|V(C)|\le  k$, or
%\item [{\rm(b)}]{\bf OLD}
%there exists a cycle $C$ in $G$ that  is non-null-homotopic in $\widehat\Sigma$ such that
%either $|V(C)|\le  k$, or
%%\lceil(2c+1)\log_2(2\beta|V(C)|)\rceil$, or
%$|V(C)|\le 2\beta g$ and some vertex of $C$ is at distance at most $2\beta g$
%from a member of $\R$, or
\item[{\rm(c)}]
\xx{%
$G'$ contains a unique  ring $C\in\R$, every vertex of $G'$ is at distance strictly less than $l_C$ from $C$, 
and $G'$ has a  non-null-homotopic cycle  in $\widehat\Sigma$  of length at most $2l_C+|V(C)|/2$, or
%such that every vertex of $D$  is at distance at most $l_C$ from $C$, or
}%
%there exist a ring $C\in\R$ and a closed walk $W$ such that $W$ is not null-homotopic in $\widehat\Sigma$,
%$W$ uses a vertex of $C$, and $W$ uses at most $2l_C$ edges of $E(G)-E(C)$, or
\item [{\rm(d)}]
%there exists a component $G'$ of $G$ such that 
$|V(G')|\le\beta(g'+R')$
%$G'$ has a cycle that is non-null-homotopic in $\widehat\Sigma$, 
and $0<R'<g'$, where $g'$ is the combinatorial genus of $G'$
and $R'$ is the number of \zz{ring vertices} in $\R$ that are a subgraph of $G'$,
or
\item[{\rm(e)}]
$G'$ includes precisely one member $C$ of $\R$,
%$|\R|=1$, say $\R=\{C\}$, 
 %for every $C\in\R$
there exists a disk $\Delta\subseteq\widehat\Sigma$ that includes $G'$,
%the component of $G$ containing $C$,
and every vertex of $G'$ is at distance at most $(2c+1)\log_2|V(C)|$ from   $C$.
%\item[{\rm(d)}] {\bf OLD}
%$|\R|=1$, say $\R=\{C\}$, and there exists a disk $\Delta\subseteq\widehat\Sigma$ that includes all of $G$,
%and every vertex of $G$ is at distance at most $(2c+1)\log_2|V(C)|$ from $C$.
\end{itemize} 
\end{theorem}

\begin{proof}
Since $\F$ is closed under curve cutting we may assume that $G$ is $2$-cell embedded in $\Sigma$.
We may also assume that $\Sigma$ is connected.
In particular, this implies that $G$ is connected.
The graph $G$ has at most $\beta(g+R)$ vertices by Theorem~\ref{thm:stronghyp}.
To prove that one of the statements (a)--(e) holds let us first assume that $R<g$.
%We first prove the second statement of the theorem under the assumption that $R< g$.
%Then $|V(G)|\le\beta(g+R)\le 2\beta g$.
%Since $g>0$  and $G$  is $2$-cell embedded, the graph $G$ has a cycle
%that is non-null-homotopic in $\widehat\Sigma$.
If $\R\ne\emptyset$, then (d) holds, and so we may assume that $\R=\emptyset$.
But then $G$ cannot be locally cylindrical, and hence (b) holds by Corollary~\ref{cor:ewloccyl}.
This proves the theorem when $R<g$.

We may therefore assume that $R\ge g$, and that (a)  does not hold.
It follows that $\R\ne\emptyset$, for otherwise $0=R\ge g$, contrary to Lemma~\ref{lem:onering}.
We claim that for some $C\in\R$
no vertex of $G$ is at distance exactly $l_C$ from $C$.
To prove this claim suppose to the contrary that for every $C\in\R$ there
exists a vertex at distance exactly $l_C$ from $C$.
Let $B_C$ be the set of all vertices of $G$ at distance \xx{at most} $l_C$ 
from $\xx{C}$. 
%Since (a) and (c) do not hold, we have
% the set $B_C$ is contained in a disk $\Delta_C\subseteq\Sigma$
%whose interior contains no vertex of a ring. Thus
We have
% $|B_C|\ge 2^{(l_C-2\beta g/(4\beta+1)}\ge 2\beta|V(C)|$ by Lemma~\ref{lem:strhypexpgrowth}. 
\xx{$|B_C|> l_C\ge 2\beta|V(C)|$.}
The sets $B_C$ are pairwise disjoint because (a) does not hold, and hence 
\xx{%
$$|V(G)|\ge \sum_{C\in\R} |B_C|> 2\beta R\ge \beta(g+R)\ge|V(G)|,$$}
%by Theorem~\ref{thm:stronghyp}, 
%because $G'$ is strongly hyperbolic with the same constants as $G$, 
a contradiction.
This proves our claim that for some $C\in\R$
no vertex of $G$ is at distance exactly $l_C$ from $C$.
An immediate consequence is that $G$ has exactly one ring, 
for otherwise 
%said vertex exists for all rings in $\R$
$G$ is not connected because (a) does not hold.
%But $G$ has at least one ring, for otherwise $0=R\ge g$, contrary to Lemma~\ref{lem:onering}.
Let $C$ be the unique ring of $G$. 
Thus every vertex of $G$ is at distance less than $l_C$ from $C$.
Since we may assume that  (c) does not hold and $G$ is $2$-cell embedded we deduce 
\xx{from Lemma~\ref{lem:disk} applied to the graph obtained from $G$ by contracting $C$
to a vertex $v$} that $g=0$.
It follows from Corollary~\ref{cor:ringdist} that every vertex of $G$ is
at distance at most  $(2c+1)\log_2|V(C)|$ from $C$.
Thus (e) holds, as desired.
\end{proof}

\yy{
We can now prove Theorem~\ref{thm:prefree5rooted}.
\begin{proof}[Proof of Theorem~\ref{thm:prefree5rooted}]
Let $\F$ be a strongly hyperbolic family  of rooted embedded graphs
that is closed under curve cutting.
We convert every rooted graph $(G,X)\in\F$ to  an embedded  graph with rings by turning
each $x\in X$ into a $1$-vertex ring with vertex-set $\{x\}$ and removing from the surface 
the interior of an arbitrary closed disk that intersects $G$ only in $x$ and the intersection
belongs to the boundary of the disk.
Let $\G$ be the family of embedded graphs with rings so obtained.
Then $\G$ is strongly hyperbolic with the same Cheeger and strong hyperbolic constants as $\F$.
Theorem~\ref{thm:free5} implies that $|V(G)|=O(g+|X|)$ and that one of the conditions (a)--(e) holds.
Condition (a) implies that some two distinct vertices of $X$ are at distance $O(1)$,
 condition (b) implies that $G$ has a non-null-homotopic cycle of length $O(\log g+1)$,
condition (c) implies that $G$ has a non-null-homotopic cycle of length $O(1)$, and
if condition (d) holds, then the graph $G'$ has a non-null-homotopic cycle,  which has  length 
at most $|V(G')|=O(g)$.
Finally, if condition (e) holds \xx{for every component of $G$}, then $V(G)=X$ by Lemma~\ref{lem:onering},
because $\F$ is closed under curve cutting.
\end{proof}
}

Theorem~\ref{thm:free5} has the following variation, where we eliminate outcome (d) at the expense
of increasing the value of $l_C$.

\begin{theorem}
\label{thm:free6}
Let $\F,c,c_2,G,\R,R,\Sigma,g,d,k,\beta$ be as in Theorem~\ref{thm:free5},
%Let $\F$ be a strongly hyperbolic family of embedded graphs with  rings,
%let $\F$ be closed under curve cutting,
%let $c$ be a Cheeger constant for $\F$, let $c_2$ be the strong hyperbolic constant for $\F$, and 
%let $G\in\F$ be a graph with rings $\R$ of total length $R$ 
%embedded in a surface $\Sigma$ of Euler genus $g$.
%Let $d:=\lceil 3(2c+1)\log_2(8c+4)\rceil$, 
%$k:=\lceil2(2c+1)\log_2g + 4d\rceil$,
%$\beta:=872d(c+1)+6c_2$,
%$l:=\lceil(2c+1)\log_2(2\beta(g+\sum_{C\in\R_1}|V(C)|))\rceil$,
 and for a ring $C\in\R$ let \xx{$l_C:=\lceil\beta(g+|V(C)|)\rceil$.}
Then $G$ has at most $\beta(g+R)$ vertices and
\xx{either}
%if $G$ is connected, then 
\begin{itemize}
\item[{\rm(a)}]
there exist distinct rings $C_1,C_2\in\R$ such that the distance in $G$ between them is
at most \xx{$l_{C_1}+l_{C_2}-1$}, or
\end{itemize} 
\xx{%
for every component $G'$ of $G$ one of the following conditions holds:}
\begin{itemize}
\item [{\rm(b)}]
$G'$ includes no rings  and
$G'$ has a non-null-homotopic cycle  in \xx{$\widehat\Sigma$} of length at most $k\xx{+2}$, or
\item[{\rm(c)}]
\xx{%
$G'$ contains a unique  ring $C\in\R$, every vertex of $G'$ is at distance strictly less than $l_C$ from $C$, 
and $G'$ has a  non-null-homotopic cycle  in $\widehat\Sigma$  of length at most $2l_C+|V(C)|/2$, or
%$G'$ contains a unique  ring $C\in\R$ and $G'$ has a  non-null-homotopic cycle  $D$ in $\widehat\Sigma$  of length at most $2l_C+|V(C)|/2$
%such that every vertex of $D$  is at distance at most $l_C$ from $C$, or
}%
\item[{\rm(d)}]
$G'$ includes precisely one member $C$ of $\R$,
%$|\R|=1$, say $\R=\{C\}$, 
% for every $C\in\R$
there exists a disk $\Delta\subseteq\widehat\Sigma$ that includes $G'$,
and every vertex of $G'$ is at distance at most $(2c+1)\log_2|V(C)|$ from   $C$.
%$|\R|=1$, say $\R=\{C\}$, and there exists a disk $\Delta\subseteq\widehat\Sigma$ that includes all of $G$,
%and every vertex of $G$ is at distance at most $(2c+1)\log_2|V(C)|$ from $C$.
\end{itemize} 
\end{theorem}

\begin{proof}
Since $\F$ is closed under curve cutting we may assume that $G$ is $2$-cell embedded in $\Sigma$.
The graph $G$ has at most $\beta(g+R)$ vertices by Theorem~\ref{thm:stronghyp}.
If $\R=\emptyset$, then Corollary~\ref{cor:ewloccyl} implies that (b) holds.
We may therefore assume that $\R\ne\emptyset$, and that (a) and (c) do not hold.
We claim that for some $C\in\R$
no vertex of $G$ is at distance exactly $l_C$ from $C$.
To prove this claim suppose to the contrary that for every $C\in\R$ there
exists a vertex  at distance exactly $l_C$ from $C$.
Let $B_C$ be the set of all vertices of $G$ at distance \xx{at most} $l_C$ 
from \xx{$C$}. 
%Since (a)  does not hold
\zz{We have}
% the set $B_C$ is contained in a disk $\Delta_C\subseteq\Sigma$
%whose interior contains no vertex of a ring. Thus 
\xx{$|B_C|> l_C\ge \beta(g+|V(C)|)$.}
%by Lemma~\ref{lem:expgrowth}. 
The sets $B_C$ are pairwise disjoint because (a) does not hold, and hence 
\xx{$$|V(G)|\ge\sum_{C\in\R} |B_C|> \beta(g+R)\ge|V(G)|,$$}
%by Theorem~\ref{thm:stronghyp}, 
%because $G'$ is strongly hyperbolic with the same constants as $G$, 
a contradiction.
This proves our claim that for some $C\in\R$
no vertex of $G$ is at distance exactly $l_C$ from $C$.
An immediate consequence is that $G$ has exactly one ring, for otherwise 
%said vertex exists for all rings in $\R$
$G$ is not connected  because (a) does not hold.
Let $C$ be the unique ring of $G$. 
Thus every vertex of $G$ is at distance less than $l_C$ from $C$,
and since (c) does not hold and $G$ is $2$-cell embedded we deduce 
\xx{from Lemma~\ref{lem:disk} applied to the graph obtained from $G$ by contracting $C$
to a vertex $v$} 
that $g=0$.
It follows from Corollary~\ref{cor:ringdist} that every vertex of $G$ is
at distance at most  $(2c+1)\log_2|V(C)|$ from $C$.
Thus (d) holds, as desired.
\end{proof}

%\newpage

%%%%%%%%%%

%\input newstuff.tex

%\end{document}
\section{Canvases}
\label{sec:can}

In this section we prove Theorem~\ref{thm:maincol}. \xx{It is an immediate consequence of} Theorem~\ref{thm:maincan2}.\REM{%
Deleted: , and the closely related  Theorem~\ref{thm:maincanvar2}.}
Let us recall that canvases and critical canvases were defined in Definition~\ref{def:can}.

\begin{definition}
Let  $\epsilon>0$ and $\alpha\ge0$, and   let $(G,\R,\Sigma,L)$ be a canvas.
%, and let $g$ be the Euler genus  of $\Sigma$.
We say that $(G,\R,\Sigma,L)$ is \emph{$(\epsilon,\alpha)$-exponentially-critical}
\zz{%
if $(G,\R,\Sigma)$ is $(\epsilon,\alpha)$-exponentially-critical with respect to $L$,
as defined in Definition~\ref{def:eacrit}.
}%
%if for every proper subgraph $G'$ of $G$ that includes all the rings in $\R$ there exists
%an $L$-coloring $\phi$ of $\bigcup\R$  such that there exist 
% $2^{\epsilon (|V(G')|-\alpha(g+R))}$ distinct $L$-colorings of $G'$ extending $\phi$,
%but there do not exist 
%$2^{\epsilon (|V(G)|-\alpha(g+R))}$ distinct $L$-colorings  of $G$ extending $\phi$.
\end{definition}

\begin{definition}
Let $\C$ be a family of canvases. 
\zz{%
We define crit$(\C)$  to be the family of all embedded graphs  with rings $(G',\R,\Sigma)$
such that there exists a graph $G$ such that $G'$ is a subgraph of $G$,
$(G,\R,\Sigma,L)\in\C$ and $(G',\R,\Sigma,L)$ is a critical canvas \xx{for some list assignment~$L$}.
Note that $\C$ need not be closed under taking subgraphs; hence the need for the subgraph $G'$.
}%

%let $G'$ be
%a subgraph of $G$ such that $G'$ contains all the rings in $\R$ and is $\R$-critical
%with respect to $L$. We define crit$(\C)$  to be the family of all embedded 
%graphs  with rings 
%$(G',\R,\Sigma)$, where $G'$ is as above.

%If $\C$ is a family of canvases, then we define crit$(\C)$  to be the family of
%graphs $G$ with rings $\R$ embedded in a surface $\Sigma$ such that there exists
%a list assignment $L$ for $G$ such that the canvas $(G,\R,\Sigma,L)$ is critical and belongs to $\C$.
%We say that a family $\C$ of canvases is {\em critically  strongly hyperbolic} 
%if the family crit$(\C)$ of embedded graphs with rings is  strongly hyperbolic.

We say that $\C$ is {\em critically hyperbolic} if 
crit$(\C)$ is closed under curve-cutting, and is strongly hyperbolic.

Now let  $\epsilon>0$ and $\alpha\ge0$. 
\zz{%
We define crit$_{\epsilon,\alpha}(\C)$  to be the family of all embedded graphs  with rings $(G',\R,\Sigma)$
such that there exists a graph $G$ such that $G'$ is a subgraph of $G$,
$(G,\R,\Sigma,L)\in\C$ and $(G',\R,\Sigma,L)$ is an $(\epsilon,\alpha)$-exponentially-critical canvas.
}%

%Let $(G,\R,\Sigma,L)\in\C$, and let $G'$ be
%a subgraph of $G$ such that $G'$ contains all the rings in $\R$ and is $\R$-critical
%with respect to $L$. We define crit$(\C)$  to be the family of all embedded 
%graphs  with rings $(G',\R,\Sigma)$, where $G'$ is as above.
%
%
%We define
%crit$_{\epsilon,\alpha}(\C)$  to be the family of
%graphs $G$ with rings $\R$ embedded in a surface $\Sigma$ such that there exists
%a list assignment $L$ for $G$ such that the canvas $(G,\R,\Sigma,L)$ is 
%$(\epsilon,\alpha)$-exponentially-critical and belongs to $\C$.

We say that $\C$ is {\em critically exponentially  hyperbolic} if 
there exists a real number $\epsilon>0$ such that for all real numbers $\alpha>0$
\begin{itemize}
%\item every member of $\G$ is hereditary, {\bf EDIT}
\item crit$_{\epsilon,\alpha}(\C)$ is closed under curve-cutting, and
\item crit$_{\epsilon,\alpha}(\C)$ is strongly hyperbolic  with Cheeger constant and strong 
hyperbolic constant that do not depend on $\alpha$.
\end{itemize}
%crit$_{\epsilon,\alpha}(\C)$ is closed under curve-cutting, and is strongly hyperbolic.
\end{definition}

\begin{definition}
\label{def:goodcan}
A family of canvases is {\em good} if it is both critically hyperbolic and critically exponentially hyperbolic.
\end{definition}

\begin{definition}
\label{def:excision}
Let $(G,\R,\Sigma)$ be an embedded graph with rings.
Let $C_1,C_2$ be disjoint cycles in $G$ such that there exists a cylinder $\Lambda\subseteq\Sigma$
with boundary components $C_1$ and $C_2$.
Let $H$ be the subgraph of $G$ consisting of all vertices and edges drawn in $\Lambda$,
\zz{%
and let $H'$ be a subgraph of $H$ that includes both $C_1$ and $C_2$ as subgraphs.
}%
We say that $\zz{H'}$ is a {\em cylindrical excision} of $G$ with boundary components $C_1$ and $C_2$.
\end{definition}

To prove that certain families of canvases are good we need a lemma.
To state the lemma we need to consider the following properties of families of canvases.

\begin{definition}
Let $\C$ be a family of canvases. We say that $\C$ {\em satisfies property} (P1) if
for every $(G,\R,\Sigma,L)\in\C$ the following holds. Let  $G'$ be a subgraph of $G$, and let
$C$ be a cycle in $G'$. If $G'$ is planar, then there exist
 at least five $L$-colorings of $C$ that extend to $L$-colorings of $G'$.

We say that $\C$ {\em satisfies property} (P2) if for every integer $d$  
there exist  integers \zz{$\gamma$ and} $\kappa$ such that the following holds.
Let $(G,\R,\Sigma,L)\in\C$.
Then for every cylindrical excision $G'$ of $G$ with boundary
cycles $C_1,C_2$ of length at most $2d$,  if the distance between $C_1$ and $C_2$ in $G'$ is at least $\kappa$,
then 
there exist disjoint subgraphs $J_1,J_2$ of $G'$ such that $C_i$ is a subgraph
of $J_i$, $|V(J_i)|\le\gamma|V(C_i)|$,
and if an $L$-coloring of $C_1\cup C_2$ extends to an  $L$-coloring of $J_1\cup J_2$,
then it extends to an $L$-coloring of $G'$.
\end{definition}

The lemma we need is the following.

\begin{lemma}\label{lem:expcritisstrhyp}
Let $\C$ be a  family of canvases that satisfies properties (P1) and (P2), let $\epsilon>0$ and $\alpha\ge0$,
and assume that  $\hbox{\rm crit}_{\epsilon,\alpha}(\C)$ is closed under  curve cutting and is hyperbolic
with Cheeger constant $c$.
Let $d:=\lceil3(2c+1)\log_2(8c+4)\rceil$ and $l:=4(c+1)(10d+3)$,
and let $\gamma,\kappa$ be as in property (P2) applied to $\C$ and the integer $d$.
Assume further that $|L(v)|\le5$ for every  $(G,\R,\Sigma,L)\in\C$ and every $v\in V(G)$.
If $\epsilon\le1/(2l\kappa)$, then $\hbox{\rm crit}_{\epsilon,\alpha}(\C)$
 is strongly hyperbolic  with strong hyperbolic constant $2l\kappa(4\gamma d+2)+2l$.
\end{lemma}

\begin{proof}
Suppose for a contradiction that there exists an  embedded graph  with rings 
$(G,\R,\Sigma)\in\hbox{\rm crit}_{\epsilon,\alpha}(\C)$,
two cycles $D_1,D_2$ in $G$ of length at most $2d$ and a cylinder $\Lambda_1 \subseteq \Sigma$ with boundary components $D_1,D_2$ such that $\Lambda_1$ includes more than \zz{$2l\kappa(4\gamma d+2)+2l$} vertices of $G$.

Let $H_1$ be the subgraph of $G$ consisting of all vertices and edges drawn in $\Lambda_1$. 
We regard $H_1$ as a graph embedded in the  cylinder with two rings $D_1$ and $D_2$.
Then $H_1$ was obtained from $G$ by cycle cutting, as defined in Definition~\ref{def:cyclecut}.
By Theorem~\ref{lem:cyclecut} the family of embedded graphs with rings obtained from
$\hbox{\rm crit}_{\epsilon,\alpha}(\C)$ by cycle cutting is hyperbolic with the same Cheeger constant.
By applying Theorem~\ref{thm:sleevedec1} to this family we deduce that \zz{$H_1$} has a 
sleeve decomposition with parameters $(2d,l,m)$ with at most one sleeve, where 
$m = 2(c+1)(|V(D_1)|+|V(D_2)|-4d)+2l \le 2l$.
Since $H_1$ has  strictly more than $2l$ vertices, it follows that the sleeve decomposition uses
exactly one sleeve. Thus $H_1$ was obtained from an embedded graph $H_2$ with four rings
by adjoining a $(2d,l)$-sleeve $H$, where $|V(H_2)|\le 2l$.
But then $G$ can be obtained from some embedded graph $H_3$ with rings by adjoining the $(2d,l)$-sleeve $H$.
Let $C_0, C_1 \ldots, C_n$ be cycles in $H$ as in the definition of a $(2d,l)$-sleeve.

Since $(G,\R,\Sigma)\in\hbox{\rm crit}_{\epsilon,\alpha}(\C)$, there exists  a list assignment $L$ 
for a graph $G^+$ such that
\zz{$G$ is a subgraph of $G^+$,}
 $(G^+,\R,\Sigma,L)\in\C$
and $(G,\R,\Sigma,L)$ is $(\epsilon,\alpha)$-exponentially-critical.
Then $H$ is a cylindrical excision of $G^+$ with boundary cycles $C_0$ and $C_n$.

Let $k:=\lfloor|V(H)|/(l\kappa)\rfloor$; since $|V(H)|\le ln$ by the definition of $(2d,l)$-sleeve,
we have $k\kappa\le n$.
For $i=0,1,\ldots, k\zz{-1}$ let $D_i:=C_{i\kappa}$,
\zz{let $D_k:=C_n$,}
 and for $i=1,2,\ldots, k$ let $J_i$  denote the subgraph of  $H$ consisting of all vertices and  edges
 drawn in the cylinder with boundary components $D_{i-1}$ and $D_i$ that is a subset of  $\Lambda_1$.
Then $J_i$ is a cylindrical excision of $G^+$ with boundary components $D_{i-1}$ and $D_i$,
and the boundary components $D_{i-1}$ and $D_i$ are at distance at least $\kappa$ in $J_i$.
By property (P2)  there exist  disjoint subgraphs $J_{i-1}^+$ and $J_i^-$ of $J_i$
such that  $D_{i-1}$ is a subgraph of $J_{i-1}^+$,  $D_i$ is a subgraph of $J_i^-$,
both $J_{i-1}^+$ and $J_i^-$ have at most $2d\gamma$ vertices,
and if an $L$-coloring of $D_{i-1}\cup D_i$ extends to an $L$-coloring of $J_{i-1}^+\cup J_i^-$, then it extends
to an $L$-coloring of $J_i$.

Let $H':=J_0^+\cup J_k^-$ and $G'=G\setminus (V(H)\setminus V(H'))$. 
Then $|V(H')|\le 4d\gamma$. Let $g$ be the Euler genus of $\Sigma$.
As $(G,\R,\Sigma,L)$ is $(\epsilon,\alpha)$-exponentially-critical, there exists an $L$-coloring $\phi$ of $\bigcup\R$ 
such that there exist 
%a set $\C$ of distinct $L$-colorings of $G'$ extending $\phi$, where 
at least $2^{\epsilon(|V(G')|-\alpha(g+R))}$ distinct $L$-colorings of $G'$ extending $\phi$, 
but there do not exist 
$2^{\epsilon (|V(G)|-\alpha(g+R))}$ distinct $L$-colorings of $G$ extending $\phi$. 
Thus there exists a set $\K$ of distinct $L$-colorings of $G\setminus (V(H)\setminus (V(C_0)\cup V(C_n)))$ 
extending $\phi$ with $|\K|\ge2^{\epsilon(|V(G')|-\alpha(g+R))} /5^{|V(H')|}$ such that every $\phi'\in \K$ extends to an $L$-coloring of $G'$ since $|L(v)|\le5$ for all $v\in V(H')$.

Let $\phi'\in \K$. 
We claim that $\phi'$ extends to at least $5^{k-1}$ distinct $L$-colorings of $G$. 
To see this, we first notice that property (P1) implies that for every $i=1,2, \dots,k-1$
 there exist at least five $L$-colorings of $D_i$ that extend to $L$-colorings of $J_i^-\cup J_i^+$.
Since $\phi'$ extends into $J_0^+\cup J_k^-$, property (P2) implies that every choice of
the  $L$-colorings of $D_i$ as above extends to an $L$-coloring of $G$ extending $\phi'$.
This proves our claim that $\phi'$ extends to at least $5^{k-1}$ distinct $L$-colorings of $G$. 

%$$bla \le |V(H_1)|\le |V(H)|+|V(H_2)|\le ln+2l$$
%
%In Need $k\kappa\le n$, so choose $k:=(bla-2l)/(l\kappa)$; $blabla:=(bla-2l)/(l\kappa)+1$
%
%$n\le$
%
%if you choose $bla:= kl\kappa+2l$
%
%choose $k:=\lfloor|V(H)|/(l\kappa)\rfloor$; then $k\kappa\le n$
%
%$$k-1-|V(H')|\ge  k-1 -4\gamma d\ge?\epsilon ln\ge\epsilon |V(H)|$$
%
%
%
%need $|V(H)|\ge? 2l\kappa(4\gamma d+2)$. but  $|V(H)|\ge bla-|V(H_2)|\ge bla-2l$

We have that 
$$k-1-\epsilon |V(H)|\ge|V(H)|(1/(l\kappa)-\epsilon)-2\ge |V(H)|/(2l\kappa)-2\ge 4\gamma d\ge|V(H')|,$$
\zz{%
where the first inequality uses  $|V(H)|\le (k+1)l\kappa$,
  the second inequality uses $\epsilon \le 1/(2l\kappa)$ and the third inequality uses 
}%
$|V(H)|\ge |V(H_1)|-|V(H_2)|\ge 2l\kappa(4\gamma d+2)$.
By the claim of the previous paragraph there exist at least $|\K|5^{k-1}$ distinct $L$-colorings of 
$G$ that extend $\phi$. But
$$|\K|5^{k-1}\ge 2^{\epsilon(|V(G')|-\alpha(g+R))} 5^{k-1-|V(H')|}\ge
2^{\epsilon(|V(G')|-\alpha(g+R))} 5^{\epsilon|V(H)|}\ge2^{\epsilon(|V(G)|-\alpha(g+R))},$$
where the first inequality
follows by the assumption on $\K$, the second inequality uses $k-1-|V (H')|\ge\epsilon|V (H)|$ from
above, and the third inequality uses $|V (G')|\ge|V (G)|-|V (H)|$,
 a contradiction.
%
%Note that $|\C'|5^{k-1} = 2^{\epsilon(|V(G')|-\alpha(g(\Sigma)+R))} 5^{k-1-|V(H')|}$. 
%We claim that $5^{k-1-|V(H')|}\ge 5^{\epsilon |V(H)|}$ and hence there exist at least 
%$2^{\epsilon(|V(G)|-\alpha(g(\Sigma)+R))}$ distinct $L$-colorings of $G$ that extend $\phi$, a contradiction. 
%It suffices to show that $k-1-|V(H')|\ge \epsilon |V(H)|$. To see this, note that $\epsilon |V(H)|\le \frac{|V(H)|}{2lA}$ as $\epsilon \le \frac{1}{2lA}$. Hence $B-\epsilon|V(H)| \ge \frac{|V(H)|-4l}{2lA}-2$. Yet $|V(H')|\le 4d_1(c_1+1)$. But as $|V(H)| > (2+4d(c_1+1))2lA+2l$, it follows that $B-\epsilon |V(H)|\ge |V(H')|$ which proves the claim.
\end{proof}

Let us recall that the families $\C_3,\C_4$ and $\C_5$ were defined in Definition~\ref{def:C345}.
In order to apply Lemma~\ref{lem:expcritisstrhyp} we need to show that these families satisfy properties (P1) and (P2).

\begin{lemma}
\label{lem:aux1}
The families $\C_3,\C_4$ and $\C_5$ satisfy properties (P1) and (P2).
\end{lemma}

\begin{proof}
Property (P1) follows from Theorem~\ref{thm:extend4cycle} applied to a subpath of $C$ of length one.
To prove that these families satisfy property (P2) let $\F$ be the family of embedded graphs with rings
$(G,\R,\Sigma)$ such that $(G,\R,\Sigma,L)$ is critical and belongs to  $\C_3\cup\C_4\cup\C_5$ for some
list assignment  $L$.
Then $\F$ is strongly hyperbolic by Lemma~\ref{lem:5liststrhyp}, Lemma~\ref{lem:g4l4strong}
and Lemma~\ref{lem:g5strong}. Let $c$ be a Cheeger constant and $c_2$ a strong
hyperbolic constant for $\F$. Let
 $d$ be given, let $\gamma=\zz{20}$,
and let $\kappa$ be an integer strictly larger than the bound on the number of vertices in $\Lambda$
in  Theorem~\ref{thm:cylconst}, when the latter is applied to the family $\F$ and cycles of length
at most $2d$.
Let $(G,\R,\Sigma,L)\in\xx{\C_3\cup\C_4\cup\C_5}$, let $G'$  be a cylindrical excision  of $G$ with boundary
cycles $C_1,C_2$ of length at most $2d$, let $\Lambda$ be the corresponding cylinder
 and let the distance between $C_1$ and $C_2$ in $G'$ be at least $\kappa$.

Let $G''$  be a minimal subgraph of $G'$ such that both $C_1$ and $C_2$ are subgraphs of $G''$
and every $L$-coloring that extends to an  $L$-coloring of $G''$ also extends to an $L$-coloring of $G'$.
Then $(G'',\{C_1,C_2\},\Lambda,L)\in \C_3\cup\C_4\cup\C_5 $ and it is a critical canvas.
Thus $(G'',\{C_1,C_2\},\Lambda)\in\F$, and by Theorem~\ref{thm:cylconst} the graph 
$G''$ \cc{has strictly fewer than $\kappa$ vertices, and hence} is not connected.
It follows that $G''$ has exactly two components, one containing $C_1$ and one containing $C_2$.
For $i=1,2$ let $J_i$ be the component of $G''$ containing $C_i$.
Then $G_i$ is $C_i$-critical with respect to $L$, and hence $|V(J_i)|\le\gamma|V(C_i)|$ by
Theorems~\ref{LinearCycle0}, \ref{DvoKawCycle} and~\ref{LinCycle44}.
\end{proof}

We now prove Theorem~\ref{thm:Cisgood}, which we restate.
%states that $\C_3,\C_4$ and $\C_5$ are good families of canvases.

\begin{theorem}
\label{thm:Cisgood-2}
The families $\C_3,\C_4,\C_5$ are good families of canvases.
\end{theorem}

\begin{proof}
%{\bf Improve}
\xx{For $i=3,4,5$ the families crit$(\C_i)$ and crit$_{\epsilon,\alpha}(\C_i)$ are clearly closed 
under curve cutting.}
The family $\C_3$ is critically  hyperbolic by Lemma~\ref{lem:g5strong},
the family $\C_4$ is critically  hyperbolic by Lemma~\ref{lem:g4l4strong}, and
the family $\C_5$ is critically  hyperbolic by Lemma~\ref{lem:5liststrhyp}.
The hyperbolicity of crit$_{\epsilon,\alpha}(\C_3)$, crit$_{\epsilon,\alpha}(\C_4)$ and
crit$_{\epsilon,\alpha}(\C_5)$ follows from Lemma~\ref{lem:g5exphyper},
 Lemma~\ref{ex:g4exphyper} and Theorem~\ref{ExpCritDisc}, respectively.
By Lemma~\ref{lem:aux1} the families $\C_3,\C_4,\C_5$ satisfy properties (P1) and (P2), and hence by Theorem~\ref{lem:expcritisstrhyp} all three families are good.
\end{proof}

In the next theorem we show that if a coloring of the rings extends  for a canvas in a critically exponentially hyperbolic
family, then it has  exponentially many extensions.

\begin{theorem}
\label{thm:col1toexp}
For every critically exponentially hyperbolic family $\C$ of canvases
there exist  $a,\epsilon>0$ such that the following holds.
Let $(G,\R,\Sigma,L)\in\C$, let $R$ be the total number of ring vertices in $\R$, and let $g$ be the
Euler genus of $\Sigma$. 
If $\phi$ is an $L$-coloring of $\bigcup\R$ such that $\phi$ extends to an $L$-coloring of $G$, 
then $\phi$ extends to at least $2^{\epsilon(|V(G)|-a(g+R))}$ distinct $L$-colorings of $G$.
\end{theorem}

\begin{proof}
%{\bf adapt to canvases}
Let $\epsilon>0$ be such that $\C$ satisfies the conditions in the definition of exponentially critically hyperbolic
family of canvases for every $\alpha>0$.
%and  $\F_{\epsilon,\alpha}$ be as in the definition of  exponentially critically hyperbolic family.
%For $\epsilon$ and $\alpha$ with $0<\epsilon<\delta$ and $\alpha>0$
%let $\F_{\epsilon,\alpha}$ be  the family of all (unlabeled) embedded graphs with 
%rings of the form $G(\Gamma)$, where $\Gamma\in\G$ is $(\epsilon,\alpha)$-exponentially-critical.
By Theorem~\ref{thm:stronghyp} there exists a constant $a>0$ that depends on $\C$ but not on
 $\alpha$ such that $|V(G)|\le a(g+R)$ for every 
%$\epsilon$ and $\alpha$ with $0<\epsilon<\delta$ and 
$\alpha>0$
and every  graph $G\in\hbox{crit}_{\epsilon,\alpha}(\C)$  
with rings 
%of total length $R$
embedded in a surface of Euler genus $g$ and a total of $R$ ring vertices.

We will show that  $a$ and $\epsilon$ satisfy the theorem.
To prove that let $(G,\R,\Sigma,L)\in\C$, let $R$ and $g$ be as in the statement of the theorem,
and assume for a contradiction that there exists an $L$-coloring  of $\bigcup\R$  that 
extends to an $L$-coloring of $G$, but does not  extend
 to at least $2^{\epsilon(|V(G)|-a(g+R))}$ distinct $L$-colorings of $G$.
Let $G'$ be a minimal subgraph of $G$ such that $G'$  includes all the rings in $\R$ %\xx{, $G'\ne\bigcup\R$} 
and there exists an $L$-coloring $\phi$ of $\bigcup\R$ such that $\phi$ 
extends to an $L$-coloring of $G'$, but does not  extend
 to at least $2^{\epsilon(|V(G')|-a(g+R))}$ distinct $L$-colorings of $G'$.
Then  the  canvas $(G',\R,\Sigma,L)$ is $(\epsilon,a)$-exponentially-critical.

It follows that $\aa{(G',\R,\Sigma)}\in\hbox{crit}_{\epsilon,\xx{a}}(\C)$.
As noted earlier this implies that $|V(G')|\le a(g+R)$.
 As $\phi$ extends to an $L$-coloring of $G$, $\phi$ also extends to an 
$L$-coloring of $G'$. 
But $|V(G')|-a(g+R)\le 0$, and hence  $\phi$ extends to at least 
$\xx{2^{\epsilon(|V(G')|-a(g+R))}}$\REM{Used to say $2^{\epsilon(|V(G')|-R-a(g+R))}$}
distinct $L$-colorings of $G'$, a contradiction.
% as $G'$ is $(\delta,c)$-exponentially-critical.
\end{proof}

The following is our main result about good families of canvases.

\begin{theorem}
\label{thm:maincan}
For every critically hyperbolic family $\C$ of canvases
there exist  $\gamma,a,\epsilon>0$ such that the following holds.
Let $(G,\R,\Sigma,L)\in\C$, 
let $g$ be the Euler genus of $\Sigma$, \xx{let $c$ be a Cheeger constant of $\hbox{\rm crit}(\C)$} and
 let $R$ be the total number of  ring vertices in $\R$.
Then $G$ has a subgraph $G'$ such that $G'$ includes all the rings in $\R$,
$G'$ has at most $\gamma(g+R)$ vertices
and for every $L$-coloring $\phi$ of $\bigcup\R$
\begin{itemize}
\item
either $\phi$ does not extend to an $L$-coloring of $G'$, or 
\item $\phi$ extends to an $L$-coloring of $G$,
and if $\C$ is critically exponentially  hyperbolic, then $\phi$ extends to 
at least $2^{\epsilon (|V(G)|-a(g+R))}$ distinct $L$-colorings of $G$.
\end{itemize}
Furthermore, 
\xx{either}
\begin{itemize}
\item[{\rm(a)}]
there exist distinct rings $C_1,C_2\in\R$ such that the distance in $G'$ between them is
at most $\gamma( |V(C_1)|+ |V(C_2)|)$, or
\end{itemize}
\xx{every component  $G''$ of $G'$} satisfies  one of the following conditions: 
\begin{itemize}
\item [{\rm(b)}]
%$\R=\emptyset$ and
\xx{$G''$  contains no rings  and $G''$ has a}
non-null-homotopic cycle  in $\xx{\widehat\Sigma}$ of length at most $\gamma(\log g+1)$, or
\item[{\rm(c)}]
\xx{%
$G''$ contains a unique  ring $C\in\R$, every vertex of $G''$ is at distance strictly less than $\gamma|V(C)|$ from $C$, 
and $G''$ has a  non-null-homotopic cycle  in $\widehat\Sigma$  of length at most $\gamma|V(C)|$, or
%$G''$ contains a unique  ring $C\in\R$ and $G'$ has a  non-null-homotopic cycle  $D$ in $\widehat\Sigma$  of length at most $\gamma|V(C)|$
%such that every vertex of $D$  is at distance at most $\gamma|V(C)|$ from $C$, or
}%
\item [{\rm(d)}]
%there exists a component $G'$ of $G$ such that 
$|V(G'')|\le\gamma(g'+R')$
%$G'$ has a cycle that is non-null-homotopic in $\widehat\Sigma$, 
and $0<R'<g'$, where $g'$ is the combinatorial genus of $G''$
and $R'$ is the number of \zz{ring vertices} in $\R$ that are a subgraph of $G''$,
or
\item[{\rm(e)}]
$G''$ includes precisely one member $C$ of $\R$,
%$|\R|=1$, say $\R=\{C\}$, 
 %for every $C\in\R$
there exists a disk $\Delta\subseteq\widehat\Sigma$ that includes $G''$,
%the component of $G$ containing $C$,
and every vertex of $G''$ is at distance at most $(2c+1)\log_2|V(C)|$ from   $C$.
%\item [{\rm(b)}]
%$\R=\emptyset$ and
%there exists a non-null-homotopic cycle  in $G'$ of length at most $\gamma(\log g+1)$, or
%\item[{\rm(c)}]
%there exist a ring $C\in\R$ and a closed walk $W$ in $G'$ such that $W$ is not null-homotopic in $\widehat\Sigma$,
%$W$ uses a vertex of $C$, and $W$ uses at most $\gamma(\log|V(C)|+1)$ edges of $E(G')-E(C)$, or
%\item [{\rm(d)}]
%there exists a component $G''$ of $G'$ such that $|V(G'')|\le\gamma(g''+R'')$
%%$G'$ has a cycle that is non-null-homotopic in $\widehat\Sigma$, 
%and $0<R''<g''$, where $g''$ is the combinatorial genus of $G''$
%and $R''$ is the number of vertices of rings in $\R$ that belong to $G''$,
%or
%\item[{\rm(e)}]
%every component of $G'$ includes precisely one member of $\R$,
%%$|\R|=1$, say $\R=\{C\}$, 
% for every $C\in\R$
%there exists a disk $\Delta\subseteq\widehat\Sigma$ that includes the component of $G'$ containing $C$,
%and every vertex of that component is at distance at most $\gamma\log|V(C)|$ from   $C$.
\end{itemize} 
\end{theorem}

\begin{proof}
%Let $\F$ be the family of embedded graphs with rings  defined as in the definition of critically  hyperbolic family.
%
%of the form $G(\Gamma)$,
%where $\Gamma\in\G$ is critical.
%Then by hypothesis $\F$ is closed under curve cutting and is strongly hyperbolic.
%\zz{%
%We may assume that all logarithms are binary.
%}%
Let $c$ be the Cheeger constant of crit$(\C)$, and
let $d,\beta>0$ be as in
Theorem~\ref{thm:free5} when the latter is applied to the family crit$(\C)$.
If $\C$ is exponentially critically hyperbolic, then let $\epsilon$ and $a$ 
be as in Theorem~\ref{thm:col1toexp}; otherwise let $\epsilon$ and $a$  be arbitrary.
Finally, let $\gamma>0$ be such that 
$\xx{\gamma\ge4\beta+5/2}$.%
\REM{Deleted: and $\gamma\ge2(2c+1)\log_2(2\beta )+2$.}
\xx{Then} $\gamma\ge4c+2$ and  $\gamma\ge4d+\xx{3}$.
%$2\lceil(2c+1)\log_2(2\beta m)\rceil\le\gamma\log_2m$ for every integer $m\ge1$.
We claim that $\gamma,a,\epsilon$   satisfy the conclusion of the theorem.

%there exist constants $c,k,\beta>0$ such that every graph
%$G\in\F$ embedded in a surface of Euler genus $g$ with rings of total length $R$ has at most 
%$\beta(g+R)$ vertices, and if $G$ is connected, then it satisfies one of (a)--(d) from the statement
%of the theorem.

Let $(G,\R,\Sigma,L)\in\C$,  let $g$ be the Euler genus of $\Sigma$, and
 let $R$ be the total number  of  ring vertices in $\R$.
Let $G'$ be a minimal subgraph of $G$ such that $G'$ includes all the rings in $\R$
and every $L$-coloring of $\bigcup\R$ that extends to an $L$-coloring of $G'$ also extends
to an $L$-coloring of $G$. 
We will show that \aa{$G'$}  satisfies the conclusion of the theorem.

It follows that \xx{either $G'=\bigcup\R$; or} 
the canvas $(G',\R,\Sigma,L)$ is critical, and hence $(G',\R,\Sigma)\in \hbox{crit}(\C)$.
%
%We first claim that the labeled graph $\psi_0(G')$ is critical.
%To see that let $G''$ be a proper subgraph of $G'$ that contains $\bigcup\R_0$ as a subgraph.
%By the minimality of $G'$ there exists a labeling $\gamma$ of $\psi_0(\bigcup\R_0)$ that extends to a labeling 
%of $\psi_0(G'')$, but not to a labeling of $(G_0,\Lambda_0,\psi_0)$. 
%Let $\psi_0(G')=(G',\Lambda',\psi')$; then $\psi_0(\bigcup\R_0)=\psi'(\bigcup\R_0)$ by (L2),
%%the third labeled graph axiom, 
%and hence $\gamma$ is a labeling of $\psi'(\bigcup\R_0)$ that extends to
%a labeling of $\psi_0(G'')=\psi'(G'')$,  but not to a labeling of $(G_0,\Lambda_0,\psi_0)$. 
%It follows from the choice of $G'$ that $\gamma$ does not extend to a labeling 
%of $\psi_0(G')$.
%This proves our claim that $\psi_0(G')$ is critical, and hence $G'\in\F$.
It follows from Theorem~\ref{thm:free5} \xx{applied to the family $\hbox{crit}(\C)$}
that $G'$ has at most $\beta(g+R)$ vertices 
and that it satisfies one of the conditions (a)--(e) of Theorem~\ref{thm:free5}.
\xx{Thus $G'$ satisfies one of the conditions (a)--(e) of the present theorem.}
By the definition of $G'$  every $L$-coloring of $\bigcup\R$ either does not extend
 to an $L$-coloring of $G'$ or extends to an $L$-coloring of $G$. 
In the latter case, if $\C$ is critically exponentially hyperbolic, then 
Theorem~\ref{thm:col1toexp} implies that $\phi$ extends to 
at least
$2^{\epsilon (|V(G)|-a(g+R))}$ distinct  $L$-colorings of $G$,
as desired.
\end{proof}

We can consolidate outcomes (b)-(d) of the previous theorem to obtain the following simpler version,
which implies Theorem~\ref{thm:maincol}.

\begin{theorem}
\label{thm:maincan2}
For every critically hyperbolic family $\C$ of canvases
there exist  $\gamma,a,\epsilon>0$ such that the following holds.
Let $(G,\R,\Sigma,L)\in\C$, 
let $g$ be the Euler genus of $\Sigma$, 
 let $R$ be the total number of  ring vertices in $\R$,
\xx{let $c$ be a Cheeger constant of $\hbox{\rm crit}(\C)$}
and let $M$  be the maximum number of vertices in a ring in $\R$.
Then $G$ has a subgraph $G'$ such that $G'$ includes all the rings in $\R$,
$G'$ has at most $\gamma(g+R)$ vertices
and for every $L$-coloring $\phi$ of $\bigcup\R$
\begin{itemize}
\item
either $\phi$ does not extend to an $L$-coloring of $G'$, or 
\item $\phi$ extends to an $L$-coloring of $G$,
and if $\C$ is critically exponentially  hyperbolic, then $\phi$ extends to 
at least $2^{\epsilon (|V(G)|-a(g+R))}$ distinct $L$-colorings of $G$.
\end{itemize}
Furthermore, \xx{either} 
\begin{itemize}
\item[{\rm(a)}]
there exist distinct rings $C_1,C_2\in\R$ such that the distance in $G'$ between them is
at most \xx{$\gamma( |V(C_1)|+ |V(C_2)|)$}, or
\end{itemize} 
\xx{every component $G''$ of $G'$} satisfies  one of the following conditions: 
\begin{itemize}
\item [{\rm(b)}]
the graph $G''$ has a cycle $C$ that is not null-homotopic in $\widehat\Sigma$;
 if \xx{$G''$ includes no ring vertex}, then the length of $C$ is at most $\gamma(\log g+1)$, 
and otherwise  it is at most $\gamma (g+M)$, or
\item[{\rm(c)}]
$G''$ includes precisely one member $C$ of $\R$,
there exists a disk $\Delta\subseteq\widehat\Sigma$ that includes $G''$,
and every vertex of $G''$ is at distance at most $(2c+1)\log_2|V(C)|$ from   $C$.
%every component of $G'$ includes precisely one member of $\R$,
% for every $C\in\R$
%there exists a disk $\Delta\subseteq\widehat\Sigma$ that includes the component of $G'$ containing $C$,
%and every vertex of that component is at distance at most $\gamma\log|V(C)|$ from   $C$.
\end{itemize} 
\end{theorem}

By applying Theorem~\ref{thm:free6} instead of Theorem~\ref{thm:free5} we obtain the
following variation of Theorem~\ref{thm:maincan}.
We omit the almost identical proof.

\begin{theorem}
\label{thm:maincanvar}
For every critically hyperbolic family $\C$ of canvases
there exist  $\gamma,a,\epsilon>0$ such that the following holds.
Let $(G,\R,\Sigma,L)\in\C$, 
let $g$ be the Euler genus of $\Sigma$, and
 let $R$ be the total number of  ring  vertices in $\R$.
Then $G$ has a subgraph $G'$ such that $G'$ includes all the rings in $\R$,
$G'$ has at most $\gamma(g+R)$ vertices
and for every $L$-coloring $\phi$ of $\bigcup\R$
\begin{itemize}
\item
either $\phi$ does not extend to an $L$-coloring of $G'$, or 
\item $\phi$ extends to an $L$-coloring of $G$,
and if $\C$ is critically exponentially  hyperbolic, then $\phi$ extends to 
at least $2^{\epsilon (|V(G)|-a(g+R))}$ distinct $L$-colorings of $G$.
\end{itemize}
Furthermore, \xx{either} 
\begin{itemize}
\item[{\rm(a)}]
there exist distinct rings $C_1,C_2\in\R$ such that the distance in $G'$ between them is
at most $\xx{\gamma(|V(C_1)|+|V(C_2)|+g)}$, or
\end{itemize}
\xx{every component $G''$ of} $G'$ satisfies  one of the following conditions:
\begin{itemize}
\item [{\rm(b)}]
\xx{$G''$ includes no rings  and
$G''$ has a non-null-homotopic cycle  in \xx{$\widehat\Sigma$} of length at most   $\gamma(\log g+1)$}, or
\item[{\rm(c)}]
\xx{%
$G''$ contains a unique  ring $C\in\R$, every vertex of $G''$ is at distance strictly less than $\gamma(|V(C)|+g)$ from $C$, 
and $G''$ has a  non-null-homotopic cycle  in $\widehat\Sigma$  of length at most $\gamma(|V(C)|+g)$, or
%$G''$ contains a unique  ring $C\in\R$ and $G''$ has a  non-null-homotopic cycle  $D$ in $\widehat\Sigma$  of length at most 
%$\gamma(|V(C)|+g)$
%such that every vertex of $D$  is at distance at most $\gamma(|V(C)|+g)$ from $C$, or
}%
%there exist a ring $C\in\R$ and a closed walk $W$ in $G'$ such that $W$ is not null-homotopic in $\widehat\Sigma$,
%$W$ uses a vertex of $C$, and $W$ uses at most $\gamma(\log(g+|V(C)|)+1)$ edges of $E(G)-E(C)$, or
\item[{\rm(d)}]
\xx{%
$G''$ includes precisely one member $C$  of $\R$,
there exists a disk $\Delta\subseteq\widehat\Sigma$ that includes $G''$,
and every vertex of $G''$ is at distance at most $\gamma\log|V(C)|$ from   $C$.}
\end{itemize} 
\end{theorem}
\REM{Deleted: 
Similarly as above, we can consolidate outcomes (b) and (c) to obtain the following simpler version.
\begin{theorem}
\label{thm:maincanvar2}
\end{theorem}
}

\section*{Acknowledgment}
This paper is based on part of the doctoral dissertation~\cite{PosPhD} of the first author,
written under the guidance of the second author.

%%%%%%%%%%%%%%%%%%%%%%%%%%%%%%%%%%%%%%%%%%%%%%%%%%%%%
%bibliography
%
%\bibliographystyle{plain}
%\bibliography{mybibfile}

\baselineskip 11pt
\vfill
\noindent
This material is based upon work supported by the National Science Foundation.
Any opinions, findings, and conclusions or
recommendations expressed in this material are those of the authors and do
not necessarily reflect the views of the National Science Foundation.
\eject

\end{document}